 \numberwithin{equation}{section}
\DeclareMathOperator{\dd}{d}
\DeclareMathOperator{\Ric}{\mathsf{Ric}}
\DeclareMathOperator{\ric}{\mathsf{ric}}
\DeclareMathOperator{\vol}{\mathsf{vol}}
\DeclareMathOperator{\R}{\mathbb{R}}
\DeclareMathOperator{\Flu}{\mathsf{F}}
\DeclareMathOperator{\X}{\mathsf{X}}
\DeclareMathOperator{\Y}{\mathsf{Y}}
\DeclareMathOperator{\M}{\mathsf{M}}
\DeclareMathOperator{\SO}{\mathsf{SO}}
 \DeclareMathOperator{\SU}{\mathsf{SU}}
\DeclareMathOperator{\E}{\mathsf{E}}
\DeclareMathOperator{\Ss}{\mathsf{S}}
\definecolor{dblue}{rgb}{0.01,0.01,0.44}
\definecolor{red}{rgb}{0.57,0.11,0.15}
\newcommand{\al}{\alpha}
\newcommand{\be}{\beta}
\newcommand{\eps}{\epsilon}
\newcommand{\bb}{\mathbb}
\newcommand{\wi}{\widetilde}
\theoremstyle{plain}
\newtheorem{lemma}{Lemma} [section]
\newtheorem{theorem}[lemma]{Theorem}
\newtheorem{corol}[lemma] {Corollary}
\newtheorem{prop} [lemma]{Proposition}
\theoremstyle{definition}
\newtheorem{definition}[lemma] {Definition}
\newtheorem{example}[lemma] {Example}
\newtheorem{remark}[lemma] {Remark}
\newtheorem*{remark*}{Remark}
\definecolor{dark}{rgb}{0.18,0.18,0.68}
\definecolor{mydark}{rgb}{0.78,0.08,0.08}
\definecolor{crew}{rgb}{0.2,0.5,0.2}
\definecolor{mmg}{rgb}{0.31,0.50,0.23}
\definecolor{dblue}{rgb}{0.01,0.01,0.44}
\definecolor{red}{rgb}{0.57,0.11,0.15}
\definecolor{cobalt}{RGB}{12,89,188}
\definecolor{mycolor}{rgb}{0.122, 1.435, 0.798}
\newtcbox{\mybox}{on line,
  colframe=mycolor,colback=mycolor!10!white,
  boxrule=0.5pt,arc=4pt,boxsep=0pt,left=6pt,right=6pt,top=6pt,bottom=6pt}
  \definecolor{mycol}{rgb}{0.422, 0.498, 0.135}
  \newtcbox{\myboxx}{on line,
  colframe=mycol,colback=mycol!10!white,
  boxrule=0.5pt,arc=4pt,boxsep=0pt,left=6pt,right=6pt,top=6pt,bottom=6pt}
\patchcmd{\@setauthors}{\MakeUppercase}{}{}{}
\begin{document}


\title[Decomposable $(5, 6)$-solutions in eleven-dimensional supergravity]{Decomposable $(5, 6)$-solutions in eleven-dimensional supergravity}

\author[]{Hanci Chi\textsuperscript{1,\MakeLowercase{a})}, Ioannis Chrysikos\textsuperscript{2,\MakeLowercase{b})}, Eivind Schneider\textsuperscript{2,\MakeLowercase{c})} \\ 
\\
\textsuperscript{1}Xi'an Jiaotong-Liverpool University, 111 Ren'ai Road, Suzhou Industrial Park, \\Suzhou, Jiangsu Province, P. R. China. \\ 
\textsuperscript{2}Faculty of Science, University of Hradec Kr\'alov\'e, Rokitansk\'eho 62, \\ Hradec Kr\'alov\'e
50003, Czech Republic.
}

\begin{abstract}
We present decomposable (5,6)-solutions $\widetilde{M}^{1,4} \times M^6$ in eleven-dimensional supergravity by solving the bosonic supergravity equations for a variety of non-trivial flux forms. Many of the bosonic backgrounds presented here are induced by various types of null flux forms on products of certain totally Ricci-isotropic Lorentzian Walker manifolds and Ricci-flat Riemannian manifolds. These constructions provide an analogue of work done by I. Chrysikos and A. Galaev who made similar computations  for decomposable (6,5)-solutions. We also present bosonic backgrounds that are products of Lorentzian Einstein manifolds with negative Einstein constant (in the ``mostly plus'' convention) and Riemannian K\"ahler-Einstein manifolds with positive Einstein constant. This conclusion generalizes a result of C. N. Pope and P. van Nieuwenhuizen concerning the appearance of six-dimensional K\"ahler-Einstein manifolds in eleven-dimensional supergravity. In this setting we construct infinitely many non-symmetric decomposable (5, 6)-supergravity backgrounds by using the infinitely many Lorentzian Einstein-Sasakian structures with negative Einstein constant on the 5-sphere, known from the work of C. P. Boyer et al. 
\end{abstract}

\thanks{E-mail:  \textsuperscript{a)}{\tt hanci.chi@xjtlu.edu.cn}\hspace{1pt}, \textsuperscript{b)}{\tt chrysikos@math.muni.cz}\hspace{1pt}, \textsuperscript{c)}{\tt eivind.schneider@uit.no}\hspace{1pt}. \\
This article may be downloaded for personal use only. Any other use requires prior permission of the author and AIP Publishing. This article appeared in J. Math. Phys. 64, 062301 (2023) and may be found at \url{https://doi.org/10.1063/5.0142572}.}

\maketitle





\section*{Introduction}

\addtocontents{toc}{\protect\setcounter{tocdepth}{1}}
\subsection*{Motivation}

The five established ten-dimensional superstring theories (Type I, Type IIA, Type IIB, heterotic $\SO(32)$ and heterotic $\E_8\times\E_8$) provide frameworks for uniting quantum theory and general relativity. By string dualities such as T-duality, a unique eleven-dimensional superstring theory, called {\it $\M$-theory}, unites these five theories. As a result, eleven-dimensional supergravity theory, viewed as a low-energy limit of $\M$-theory, has attracted much attention during the last half century (see for example \cite{DNP86, Wi95, Duff99, GSW12, T14}). 

The Lagrangian of eleven-dimensional supergravity was proposed in \cite{Cre}. The fields in the theory is a Lorentzian metric $h$, a closed 4-form $\Flu$ (called the flux form), and a Majorana spinor $\Psi$. They are defined on an eleven-dimensional manifold $\X$ and are subject to the equations of motion determined by the Lagrangian. A special class of supergravity solutions are those with vanishing fermionic part, $\Psi=0$. In this case the equations of motion reduce to a simpler set of equations, involving only $h$ and $\Flu$, which we call the bosonic supergravity equations. This set of equations closely resembles the Einstein-Maxwell equations in four dimensions. 
Solutions to the bosonic supergravity equations are called bosonic supergravity backgrounds. The bosonic backgrounds include the special class of eleven-dimensional Ricci-flat Lorentzian manifolds for which $\Flu=0$. 

Finding bosonic supergravity backgrounds is an important task, and the literature on bosonic supergravity backgrounds is vast. Several geometrical tools and constructions have been used for finding them, including manifolds with special holonomy or special $G$-structures,  irreducible symmetric spaces, compactifications, Killing superalgebras,   certain ansatzes on $h$ and $\Flu$, and other.
 We  refer  to some representative works
  \cite{ PT95,  BB96,   F00, Fig1,  BDS02,  Gaun1, Mart, HM05, Ts06, F07, Fig2, GP15,  MFS18, Fei, BF21}, and the reader can find more references therein.

Supersymmetries also play an important role in these investigations. The maximally supersymmetric bosonic backgrounds are, in addition to flat Minkowski space, the Freund-Rubin backgrounds $(\mathsf{AdS})_7\times \Ss^4$  and $(\mathsf{AdS})_4\times \Ss^7$  (\cite{FR80}) and a particular pp-wave (see for example \cite{FOFP}). These are locally homogeneous, something which is true for all backgrounds admitting more than half of the maximal amount of supersymmetries (\cite{FH12}). Other well-known examples are the  M2-brane and the M5-brane (\cite{DS91,Gu92}) whose near-horizon geometries are the  Freund-Rubin backgrounds (see also \cite{F98,S09}). These have exactly half of the maximal amount of supersymmetries. On the other side of the spectrum, with respect to the number of supersymmetries admitted, we have bosonic backgrounds such as $(\mathsf{AdS})_5\times \mathbb{CP}^3$ which admit no supersymmetry (see \cite{Pope89}).

  \subsection*{Outline}

In this article we search for bosonic supergravity backgrounds that are products of an oriented Lorentzian manifold $(\widetilde{M}^{1,4},\tilde g)$ and an oriented  Riemannian manifold $(M^6, g)$,  with flux form $\Flu \in\Omega^4(\mathsf{X})$  of the type 
\[
  \Flu= \varphi \tilde \al +   \tilde \be \wedge \nu +  \tilde \gamma \wedge \delta  +  \tilde \varpi  \wedge \eps  + \tilde \psi \theta\,,
\]
where the $i$th term is the product of an $(5-i)$-form on $\wi{M}^{1,4}$ and an $(i-1)$-form on $M^6$, for $i=1,\ldots,5$. For various flux forms of the above type, we write down the corresponding simplified form of the bosonic supergravity equations and find particular solutions to these equations. Our work can be considered as a natural continuation of \cite{CG}, where products of  six-dimensional Lorentzian manifolds and  five-dimensional Riemannian manifolds  are treated in a similar way.

We begin by describing  the general constraints that appear due to  the bosonic supergravity equations (which we split up into the closedness   condition, the Maxwell equation and the supergravity Einstein equation).  For a general 4-form $\Flu$ of the above form, the resulting system is still quite complicated. See for example Proposition \ref{MAX} for the Maxwell equation and the equations (\ref{HHRic}), (\ref{VVRic}), (\ref{VHRic})  for the  supergravity Einstein equation. In order to obtain a more tractable system of equations,  we specify $\Flu$ even further by letting three or four  of its terms vanish.   Then, as in \cite{CG}, the constraints which occur due to the Maxwell equation in combination with the closedness  condition are simplified (see Proposition \ref{casesMC}), and the same applies for the supergravity Einstein equation  (Proposition \ref{spck}). 
It is worth mentioning that the form of $\Flu$ can impose non-trivial restrictions on the geometry of  $\wi M^{1, 4}$ or  $M^6$ (see Corollary \ref{constantlength}).  For example, for $\Flu=\tilde \alpha$ the supergravity Einstein equation implies that $(M^6,g)$ is an Einstein manifold, while for $\Flu=\theta$ the Einstein equation implies that $(\wi{M}^{1,4},\tilde g)$ is an Einstein manifold. In both  cases, the scalar curvature of $(\X,h)$ is constant (Corollary \ref{useforflags}).  

In order to find explicit solutions of eleven-dimensional bosonic supergravity, we take two  different approaches. 
First, we examine the case when $\Flu$ is composed of null forms. In this case, the bosonic supergravity equations simplify significantly, as shown in Proposition \ref{GeneralNull} and Theorems \ref{th:VVRicalpha}, \ref{th:VVricbeta}, \ref{th:VVricGamma}, \ref{th:VVricvarpi}, \ref{alpha-beta-nu} and \ref{beta-nu-varpi-epsilon}. Moreover, the supergravity Einstein equation requires $(M^6,g)$ to be Ricci-flat (Proposition \ref{GeneralNull}). In addition, for a bosonic supergravity background $(\X^{1,10}=\widetilde{M}^{1,4} \times M^6, h=\tilde{g}+g,\Flu=\tilde \varpi \wedge \epsilon)$, where $\tilde\varpi \in \Omega ^1(\wi{ M}^{1,4})$ is null,  we see in Corollary \ref{cor:RicIso} that $(\X, h)$ is totally Ricci-isotropic, as an analog of  \cite[Cor. 4.10]{CG}. To find concrete solutions to the bosonic supergravity equations, we follow \cite{CG} and assume that the Lorentzian part $(\widetilde{M}^{1,4},\tilde g)$ is a special type of Walker metrics, since these come equipped with a distribution of null lines, from which non-trivial null flux forms can be built.  Propositions \ref{propalpha}, \ref{propbeta}, \ref{propgamma}, \ref{propvarpi}, \ref{gen1}, \ref{gen2} and \ref{gen3} concern  non-symmetric  bosonic backgrounds that are  direct products of a Ricci-isotropic Lorentzian Walker manifold and a Ricci-flat Riemannian manifold.  These products  have special holonomy properties and can potentially support supersymmetries (see also \cite{F00}). We explicitly illustrate these results by examples involving five-dimensional pp-waves, while an investigation of supersymmetries will be left for a forthcoming work.


In the second approach, we study some cases where the Riemannian part $(M^6,g, \omega)$ is a   K\"ahler manifold. In this case, we do not assume that $\Flu$ is null, but rather that it is related to the K\"ahler form $\omega$. In particular, we consider the cases $\Flu= \tilde \gamma \wedge \delta$ with $\delta=\omega$ and $\Flu=\theta=c \star_6 \omega$, where $c$ is a constant. We see that if the flux form is given by $\Flu=\tilde \gamma \wedge \delta$ and $\|\tilde \gamma\|_{\tilde g}^2$ is not constant, then the supergravity Einstein equation forces $M^6$ to be a Ricci-flat almost Hermitian manifold (Corollary \ref{constantlength}), so in Proposition \ref{prop61} we write down the bosonic supergravity equations for the case when $M^6$ is a K\"ahler manifold. For the case where the flux form is given by $\Flu=c\star_6 \omega$, Proposition \ref{prop:thetakahler} says that the bosonic supergravity equations are satisfied, if and only if both $(\widetilde{M}^{1,4},\tilde g)$ and $(M^6,g)$ are Einstein with Einstein constants $\frac{1}{6} c^2$ and $-\frac{1}{6} c^2$, respectively. Thus $(\widetilde{M}^{1,4},\tilde g)$ has positive scalar curvature, while  $(M^6,g)$ has negative scalar curvature. For instance, the symmetric spaces $\mathbb{CP}^3$ and $\mathsf{Gr}_{+}(2,5)$ endowed with their respective (unique) homogeneous K\"ahler-Einstein metrics can be used to obtain some of the  decomposable symmetric supergravity backgrounds presented in \cite{Fig2}. Note that in this paper we use the ``mostly minus'' convention for the Lorentzian metric $h$, and thus  a Riemannian metric $g$ is viewed as a negative definite metric.
 Proposition \ref{prop:thetakahler} generalizes a result presented in \cite{Pope89} which involved bosonic backgrounds of the form $(\mathsf{AdS})_{5}\times M^6$, where $M^6$ is a compact K\"ahler manifold.  
We discuss some possible candidates for the Einstein manifold $(\widetilde{M}^{1,4},\tilde g)$, other than $(\mathsf{AdS})_{5}$. 
In particular, we are based on  {\it  negative   Sasakian geometries} and use the infinitely many different Lorentzian Einstein-Sasakian structures with negative Einstein constant (in the ``mostly plus'' convention) described on the 5-sphere $\Ss^5$  by Boyer  et al. \cite{Boyer}.  In this way we get infinitely many new bosonic non-symmetric decomposable $(5, 6)$-solutions in eleven-dimensional supergravity given by  $\Ss^5\times M^6$, 
 where $M^6$ is any six-dimensional (de Rham irreducible) K\"ahler-Einstein manifold with positive scalar curvature (also in the ``mostly plus'' convention)  and $\Ss^5$ is endowed with one of the Lorentzian Einstein-Sasakian structures mentioned above.  Note that all such solutions that are based on the same K\"ahler-Einstein manifold $M^6$ have equal flux forms. Other such examples can be  obtained by using  the connected sum $\sharp k (\Ss^2\times\Ss^3)$, since this manifold  also admits Lorentzian Einstein-Sasakian metrics for any integer $k\geq 1$.

We should finally mention that the above conclusion fails if $M^6$ is a six-dimensional (strictly) nearly K\"ahler manifold, since in this case the K\"ahler form $\omega$ is not closed, so the 4-form $\Flu$ indicated above cannot serve as a flux form (see the final section).  As a consequence,  and in line with the conclusion pointed out in \cite{Pope89} for $(\mathsf{AdS})_5\times M^6$,  bosonic solutions of the form $\widetilde{M}^{1,4} \times M^6$, where   $\widetilde{M}^{1,4}$ is a  Lorentzian Einstein manifold   and $M^6$ is a compact K\"ahler-Einstein manifold,  are not expected to  admit supersymmetries. Essentially, this is because  in dimension 6 smooth spin manifolds admitting  real Killing spinors are exhausted by  nearly K\"ahler manifolds, see for example  \cite{Baum}, and  see also \cite{Gaun2} for the classification of  eleven-dimensional superymmetric  supergravity solutions containing $(\mathsf{AdS})_{5}$. 


The paper is structured as follows. 
In Section \ref{preliminaries} we lay out the framework that will be used throughout the paper and establish some notation. We introduce the eleven-dimensional bosonic supergravity equations, and write down the ansatz of the general flux form, which we use in this paper.  
The bosonic supergravity equations corresponding to this  ansatz are computed in Sections \ref{closuremaxwell} and \ref{einstein}, respectively. There we also investigate the form of the equations after further simplification of the flux form, and state some general consequences of the equations. 
As mentioned above, the bosonic supergravity equations are simpler when the flux form is composed of null forms, and in Section \ref{null} we present some general results for such flux forms.  Next, in  Section \ref{walker} we apply these results to Ricci-isotropic Lorentzian Walker manifolds and produce several explicit examples of decomposable (5, 6)-supergravity backgrounds. 
In Section \ref{other} we drop the requirement that $\Flu$ is null and analyze the appearance of K\"ahler-Einstein manifolds and of negative Einstein-Sasakian geometries in our decomposable (5, 6)-solutions.

\addtocontents{toc}{\protect\setcounter{tocdepth}{2}}
\section{Preliminaries} \label{preliminaries}
In this work we study  connected eleven-dimensional Lorentzian manifolds of the form 
\[
\X^{1, 10} =   \wi M^{1, 4}\times M^6 \,,
\]
where $(\wi M^{1, 4}, \tilde g )$ is a five-dimensional connected oriented  Lorentzian manifold  and $(M^{6}, g)$ is a six-dimensional connected oriented Riemannian manifold. Our aim is to present on such products a systematic examination of  the  bosonic supergravity  equations, i.e. of the following system of field equations (see for example \cite{Fig1,ACT}):
\begin{equation}\left\{
\begin{tabular}{l l l}
$\dd \Flu$ &$=$&0\,, \\
$\dd \star \Flu$ &$=$ &$\frac{1}{2}\Flu \wedge \Flu$\,, \\
$\Ric_{h}(X,Y)$&$=$&$-\frac{1}{2} \langle X\lrcorner \Flu, Y\lrcorner \Flu\rangle_{h} +\frac{1}{6}h(X,Y) \|\Flu\|^2_{h}$\,.
\end{tabular}\right. \label{MainEq}
\end{equation}
Here,  the Lorentzian metric on $\X^{1, 10}$ is the product metric  $h =  \tilde g+g$ and 
$\star \colon \Omega^{k}(\X^{1,10})\to \Omega^{11-k}(\X^{1,10})$ is the Hodge-star operator on $(\X^{1, 10}, h)$, defined by
$
\al \wedge \star\be = \langle \al , \be\rangle_{h}\vol _{\X},
$
where $\vol_{\X}=\vol_{\wi{M}}\wedge\vol_{M}$  denotes the volume form on $(\X^{1,10}, h)$.  We also have $\|\Flu\|_h^2=\langle \Flu,\Flu \rangle_h$. The bosonic field  $\Flu$ is a global 4-form on $\X^{1, 10}$,  called the {\it flux form} which, together with the Lorentzian metric $h$,  form the bosonic sector of eleven-dimensional supergravity.  We will refer to the three conditions  appearing in (\ref{MainEq}) as the {\it closedness condition}, the   {\it Maxwell equation}, and the  {\it supergravity Einstein equation}, respectively.
Triples $(\X^{1, 10}, h, \Flu)$ solving this system of equations are called {\it   bosonic supergravity backgrounds}. 

%

\begin{remark}
In this paper we apply the ``mostly minus'' convention. That is, the signature for $h$ is $(+,-,\dots,-)$ and hence $g$ is a negative definite Riemannian metric on $M$. Recall that for any two $k$-forms $\omega$ and $\phi$, we have 
$$
\langle\omega,\phi\rangle_h= \frac{1}{k!}\sum_{1\leq i_\alpha,j_\beta\leq 11}\omega_{i_1\dots i_k}\phi_{j_1\dots j_k}h^{i_1j_1}\dots h^{i_kj_k}
$$
Then, for a $k$-form $\omega\in \Omega^k(M^6)$ the sign of $\|\omega\|_h^2=\|\omega\|_g^2$ is equal to that of $(-1)^k$. Note that this choice of convention makes the right-hand-side of the last equation of \eqref{MainEq} different from how it usually appears in the literature, by a minus sign.
\end{remark}

Before we discuss  closed 4-forms $\Flu\in \Omega ^4_{\rm cl} (\X^{1, 10}) $ on $\X^{1, 10}$, let us  decompose the tangent space $\bb{V}:=T_{x}\X\simeq  \bb{R}  ^{1,10 } $  of $\X^{1, 10}$ at a point  $x\in\X$ as 
\[
\bb{V} =  \bb{L} ^{1,4}\oplus \bb{E}^6\,,
\]
where we identify $\bb{L}$ with the five-dimensional Minkowski tangent space of $\wi{M}^{1,4}$ and  $\bb{E}$ with the six-dimensional Euclidean tangent space  of $M^6$.  Then, one has an orthogonal decomposition
\[
\Lambda^{4}\bb{V}=\Lambda ^4 \bb{R}  ^{ 1,10 } =\Lambda ^4 \bb{L} \bigoplus (\Lambda ^3 \bb{L} \wedge \Lambda^{1}\bb{E}) \bigoplus (\Lambda ^2 \bb{L} \wedge \Lambda ^2 \bb{E}) \bigoplus (\Lambda^{1}\bb{L} \wedge \Lambda ^3 \bb{E})\bigoplus  \Lambda ^4 \bb{E} .
\]
In this paper, we  consider global differential 4-forms $\Flu\in\Omega^{4}(\X)$, given by
\begin{equation}
 \Flu= \varphi \tilde \al +   \tilde \be \wedge \nu +  \tilde \gamma \wedge \delta  +  \tilde \varpi  \wedge \eps  + \tilde \psi \theta \,, \label{4formdeco}
\end{equation}
for some
\begin{eqnarray*}
 &&\tilde \al \in \Omega ^4 (\wi{M}^{1,4} )\,,\quad \tilde \be \in  \Omega ^3 (\wi{ M}^{1,4} )\,,\quad \tilde  \gamma  \in \Omega  ^2 ( \wi{ M}^{1,4} )\,,\quad  \tilde \varpi \in \Omega ^1  (\wi{ M}^{1,4})\,, \quad  \tilde \psi \in C^\infty(\wi{M}^{1,4})\,,\\
&& \varphi \in C^{\infty}(M^6)\,, \quad \nu \in \Omega ^1 (M^6 )\,,\quad \delta \in \Omega ^2 (M^6)\,,\quad \eps \in \Omega ^3 (M^6)\,,\quad \theta \in \Omega ^4 (M^6)\,.
 \end{eqnarray*}
  We assume that all these differential forms are smooth and defined globally on $\widetilde{M}^{1,5}$ and $M^6$, respectively. We will show that the chosen class of  4-forms is large enough to allow for a variety of non-trivial bosonic supergravity backgrounds.  Note that the difference between (\ref{4formdeco}) and a general 4-form on $\X^{1,10}$ is, firstly, that a general 4-form may have more terms taking values in each of the subspaces $\Lambda^i \bb{L} \wedge \Lambda^{4-i} \bb{E}$ and, secondly, that each term can be multiplied by a function on $\X^{1,10}$.

\section{The closedness condition and the Maxwell equation} \label{closuremaxwell}
 We begin by writing down the closedness condition and the Maxwell equation on $(\X^{1,10}=\widetilde{M}^{1,4} \times M^6,h=\tilde g+g)$ for the 4-form $\Flu$ given by \eqref{4formdeco}.  The  closedness  condition can be found by computing $\dd \Flu$ and comparing terms of similar type, a  procedure which gives the following.
\begin{lemma} \label{CLOS}
The 4-form  $\Flu$ defined by \textnormal{(\ref{4formdeco})} is closed if and only if  the following system is satisfied:
\[
\left\{
\begin{tabular}{l l}
$\varphi \mathrm{d} \tilde \al=0$\,, & $\tilde \gamma \wedge \mathrm{d} \delta + \mathrm{d}  \tilde \varpi \wedge \eps =0$\,,\\
$\tilde \al \wedge \mathrm{d} \varphi+\mathrm{d} \tilde \beta \wedge \nu=0$\,, & $\tilde \varpi \wedge \mathrm{d} \epsilon - \mathrm{d} \tilde \psi \wedge \theta = 0$\,, \\
$\mathrm{d} \tilde  \gamma \wedge \delta -  \tilde \be \wedge \mathrm{d} \nu =0$\,, & $\tilde \psi \mathrm{d} \theta=0$\,. 
\end{tabular}\right.
\]
\end{lemma}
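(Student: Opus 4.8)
The plan is to exploit the product structure of $\X^{1,10}=\wi M^{1,4}\times M^6$ directly. On a pseudo-Riemannian product the exterior derivative splits as $\dd=\dd_{\wi M}+\dd_{M}$, where $\dd_{\wi M}$ differentiates only in the $\wi M$-directions and $\dd_M$ only in the $M$-directions; in particular $\dd_M$ annihilates any form pulled back from $\wi M$ and $\dd_{\wi M}$ annihilates any form pulled back from $M$. Each of the five summands of $\Flu$ in \eqref{4formdeco} is \emph{decomposable}, i.e. a wedge product $\tilde\eta\wedge\zeta$ of a form $\tilde\eta$ pulled back from $\wi M$ and a form $\zeta$ pulled back from $M$ (with the scalar factors read as $\varphi\tilde\al=\tilde\al\,\varphi$ and $\tilde\psi\theta=\tilde\psi\,\theta$). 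The Leibniz rule then gives
\begin{equation*}
\dd(\tilde\eta\wedge\zeta)=\dd_{\wi M}\tilde\eta\wedge\zeta+(-1)^{\deg\tilde\eta}\,\tilde\eta\wedge \dd_M\zeta .
\end{equation*}
So the first step is simply to apply this to all five terms of $\Flu$ and record the ten resulting summands together with their signs.

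The second step is bidegree bookkeeping. Writing $\Lambda^5\bb{V}=\bigoplus_{i+j=5}\bigl(\Lambda^i\bb{L}\wedge\Lambda^j\bb{E}\bigr)$ for the degree-$5$ analogue of the $\Lambda^4\bb{V}$-decomposition recorded above, each of the ten summands of $\dd\Flu$ lies in exactly one of the six blocks indexed by $(i,j)\in\{(5,0),(4,1),(3,2),(2,3),(1,4),(0,5)\}$, according to its form-degrees on the two factors. A direct check shows that consecutive terms of $\Flu$ feed into overlapping blocks: $\varphi\tilde\al$ contributes to $(5,0)$ and $(4,1)$, $\tilde\be\wedge\nu$ to $(4,1)$ and $(3,2)$, $\tilde\gamma\wedge\delta$ to $(3,2)$ and $(2,3)$, $\tilde\varpi\wedge\eps$ to $(2,3)$ and $(1,4)$, and $\tilde\psi\theta$ to $(1,4)$ and $(0,5)$. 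Since the block decomposition is a direct sum, $\dd\Flu=0$ holds if and only if the component in each of the six blocks vanishes separately; reading these six conditions off, after using $\tilde\eta\wedge\zeta=(-1)^{(\deg\tilde\eta)(\deg\zeta)}\zeta\wedge\tilde\eta$ to normalise the ordering, yields exactly the six equations of the Lemma.

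There is no genuine obstacle here; the computation is routine, and the only point requiring care is sign bookkeeping. Concretely, one must track the Koszul sign $(-1)^{\deg\tilde\eta}$ appearing in the Leibniz rule — the odd-degree factors $\tilde\be$ and $\tilde\varpi$ are exactly what produce the minus signs in the $(3,2)$- and $(1,4)$-block equations $\dd\tilde\gamma\wedge\delta-\tilde\be\wedge \dd\nu=0$ and $\tilde\varpi\wedge \dd\eps-\dd\tilde\psi\wedge\theta=0$ — together with the reordering sign $(-1)^{(\deg\tilde\eta)(\deg\zeta)}$ used to move a factor pulled back from $M$ past one pulled back from $\wi M$ so as to match the ordering in the statement (for instance rewriting $\dd\varphi\wedge\tilde\al$ as $\tilde\al\wedge \dd\varphi$, where the sign $(-1)^{1\cdot4}$ is trivial). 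Once these signs are handled, the six block equations coincide verbatim with those of the statement, which completes the proof.
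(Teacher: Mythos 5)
Your proof is correct and follows essentially the same route the paper indicates (the paper gives no detailed proof, only the remark that one computes $\dd\Flu$ and compares terms of similar type, i.e. by bidegree in $\Lambda^i\bb{L}\wedge\Lambda^j\bb{E}$). Your sign bookkeeping checks out: the Koszul signs from the odd-degree factors $\tilde\be$ and $\tilde\varpi$ reproduce exactly the two equations with minus signs, and the six block equations match the lemma verbatim.
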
 
 In particular, we notice that  $\Flu$ given by \eqref{4formdeco} is closed in the case that $\tilde \alpha, \tilde \beta, \tilde \gamma, \tilde \varpi, \tilde \psi,$ and $\varphi, \nu, \delta, \epsilon, \theta$ are closed on their respective manifolds. 

Before studying the Maxwell equation, we recall some basic useful formulas (see also \cite{ACT, CG}).

\begin{lemma}\label{tools}
 Let $\X=\wi M^{p}\times M^{q}$ be a product of two pseudo-Riemannian manifolds $(\tilde M, \tilde g)$ and $(M, g)$ of dimensions $p, q$, and let  $\tilde s$, $ s $ be the number of negative eigenvalues of $\tilde g $, $g$, respectively. Let us denote  by   $ \star $, $\star_{p}$, $\star_{q}$ the Hodge operator on $(\X, h=\tilde g+g) $, $ (\wi M , \tilde g )$, and $(M,  g )$, respectively. Then, for any $\tilde \al  \in \Omega ^{\tilde k} (\wi M) $,  and $ \be \in \Omega ^k (M) $ the following hold:
\[
\begin{tabular}{l l}
$\star \tilde \al =  \star_{p} \tilde \al  \wedge \mathrm{vol} _M$\,,   & $\star \vol _{M} = (-1 )^{ s}(-1) ^{pq}\vol_{\wi M}$\,, \\
$\star \vol _{\wi M}= (-1 )^{ \tilde s }\vol _M$\,, & $ \langle \tilde \al  \wedge \be , \tilde \al \wedge \be  \rangle_{h} = \langle \tilde \al , \tilde \al\rangle_{\tilde g}\langle \be , \be \rangle_{g}$\,,   \\
 $\star \be  = (-1) ^{pq} \star_{q}\be  \wedge \vol _{\wi M}$\,, & $\star (\tilde \al \wedge \be ) = (-1 )^{ k (p- \tilde k )} \star_{p} \tilde \al \wedge \star_{q} \be$\,.  
 \end{tabular}
 \]
 \end{lemma}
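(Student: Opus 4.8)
The plan is to treat the sixth identity, $\star(\tilde\al\wedge\be)=(-1)^{k(p-\tilde k)}\star_{p}\tilde\al\wedge\star_{q}\be$, as the master formula from which the other five follow by specialization, so I would prove it first and then read off the rest. The essential inputs are the single-factor Hodge identities $\mu\wedge\star_{p}\mu'=\langle\mu,\mu'\rangle_{\tilde g}\,\vol_{\wi M}$ and $\lambda\wedge\star_{q}\lambda'=\langle\lambda,\lambda'\rangle_{g}\,\vol_{M}$, together with the fourth (inner-product) identity. I would therefore begin with the inner product. Since $h=\tilde g+g$ splits the cotangent space orthogonally as $\bb V^{*}=\bb L^{*}\oplus\bb E^{*}$, the induced pairing on $\Lambda^{\bullet}\bb V^{*}$ is the tensor product of the pairings on $\Lambda^{\bullet}\bb L^{*}$ and $\Lambda^{\bullet}\bb E^{*}$. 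Juxtaposing local orthonormal coframes of $(\wi M,\tilde g)$ and $(M,g)$ gives an orthonormal coframe of $(\X,h)$; a direct check on the resulting basis monomials shows that forms of distinct bidegree in $\bigoplus_{i+j}\Lambda^{i}\bb L^{*}\wedge\Lambda^{j}\bb E^{*}$ are mutually orthogonal and that $\langle\tilde\mu\wedge\lambda,\tilde\al\wedge\be\rangle_{h}=\langle\tilde\mu,\tilde\al\rangle_{\tilde g}\,\langle\lambda,\be\rangle_{g}$ for $\tilde\mu,\tilde\al\in\Omega^{\tilde k}(\wi M)$ and $\lambda,\be\in\Omega^{k}(M)$. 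The fourth identity is the diagonal case $\tilde\mu=\tilde\al,\ \lambda=\be$.

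For the master formula I would set $\Theta:=(-1)^{k(p-\tilde k)}\star_{p}\tilde\al\wedge\star_{q}\be$, which has the correct degree $(p+q)-(\tilde k+k)$, and verify that $\Theta$ satisfies the defining relation $\xi\wedge\Theta=\langle\xi,\tilde\al\wedge\be\rangle_{h}\,\vol_{\X}$ for every $(\tilde k+k)$-form $\xi$. By bilinearity and the bidegree decomposition it suffices to test on $\xi=\tilde\mu\wedge\lambda$ with $\tilde\mu\in\Omega^{\tilde k}(\wi M)$ and $\lambda\in\Omega^{k}(M)$: if $\xi$ lies in any other bidegree, the right-hand side vanishes by orthogonality of the bigrading, while $\xi\wedge\Theta$ vanishes because it cannot be top-degree in both factors simultaneously. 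For the matching bidegree, commuting $\star_{p}\tilde\al$ (degree $p-\tilde k$) past $\lambda$ (degree $k$) produces a sign $(-1)^{k(p-\tilde k)}$ that cancels the prefactor in $\Theta$, giving
\[
\xi\wedge\Theta=(\tilde\mu\wedge\star_{p}\tilde\al)\wedge(\lambda\wedge\star_{q}\be)=\langle\tilde\mu,\tilde\al\rangle_{\tilde g}\,\langle\lambda,\be\rangle_{g}\,\vol_{\wi M}\wedge\vol_{M}.
\]
Since $\vol_{\X}=\vol_{\wi M}\wedge\vol_{M}$, the inner-product identity from the previous paragraph rewrites the right-hand side as $\langle\xi,\tilde\al\wedge\be\rangle_{h}\,\vol_{\X}$, so $\Theta=\star(\tilde\al\wedge\be)$.

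Finally I would deduce the remaining four identities by specialization, using $\star_{p}1=\vol_{\wi M}$, $\star_{q}1=\vol_{M}$, $\star_{p}\vol_{\wi M}=(-1)^{\tilde s}$, and $\star_{q}\vol_{M}=(-1)^{s}$. Taking $\be=1$ (so $k=0$) in the master formula yields $\star\tilde\al=\star_{p}\tilde\al\wedge\vol_{M}$; taking $\tilde\al=1$ (so $\tilde k=0$) and commuting $\vol_{\wi M}$ past $\star_{q}\be$ (sign $(-1)^{p(q-k)}$) yields $\star\be=(-1)^{kp+p(q-k)}\star_{q}\be\wedge\vol_{\wi M}=(-1)^{pq}\star_{q}\be\wedge\vol_{\wi M}$. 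Then $\star\vol_{\wi M}=(-1)^{\tilde s}\vol_{M}$ follows by putting $\tilde\al=\vol_{\wi M}$ in the first of these, and $\star\vol_{M}=(-1)^{s}(-1)^{pq}\vol_{\wi M}$ by putting $\be=\vol_{M}$ in the second. The only genuine labor here is sign-bookkeeping, which is routine; the one conceptual point that must be handled carefully—and which I expect to be the main (mild) obstacle—is the reduction to pure-bidegree test forms in the master formula, resting on orthogonality of the bidegree decomposition and on the vanishing of wedge products that fail to be top-degree in one of the two factors.
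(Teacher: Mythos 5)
Your proof is correct. Note that the paper itself gives no argument for this lemma: it is stated as a recollection of standard formulas with a pointer to \cite{ACT, CG}, so there is no in-paper proof to compare against. Your strategy --- establishing the product formula for the inner product on the bigraded decomposition of $\Lambda^{\bullet}T^{*}\X$, verifying that $(-1)^{k(p-\tilde k)}\star_{p}\tilde\al\wedge\star_{q}\be$ satisfies the defining relation $\xi\wedge\Theta=\langle\xi,\tilde\al\wedge\be\rangle_{h}\vol_{\X}$ on pure-bidegree test forms (with the off-diagonal cases killed on one side by orthogonality and on the other by exceeding top degree in a factor), and then specializing to $\tilde\al=1$, $\be=1$, $\tilde\al=\vol_{\wi M}$, $\be=\vol_{M}$ --- is sound, and I verified that all the signs, including $(-1)^{kp+p(q-k)}=(-1)^{pq}$ in the third identity and $(-1)^{\tilde s}$, $(-1)^{s}$ from the norms of the volume forms, come out matching the statement.
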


With the help of Lemma \ref{tools} we compute  $\star\Flu$  and  $\dd\star\Flu$ for $\Flu$ being of the form (\ref{4formdeco}). We obtain 
\begin{equation*}
\star\Flu =  \star_{5} \tilde\al \wedge \star_{6} \varphi+\star_{5} \tilde \be \wedge \star_{6} \nu +  \star_{5} \tilde \gamma \wedge \star_{6} \delta +\star_{5} \tilde \varpi \wedge \star_{6} \eps +\star_{5} \tilde \psi \wedge  \star_{6} \theta  \,,
\end{equation*}
and consequently 
\begin{eqnarray*}
\dd \star \Flu &=&
\dd\star_{5}\tilde \al \wedge \star_{6} \varphi +  \dd\star_{5} \tilde \be \wedge \star_{6} \nu + \star_{5} \tilde \be \wedge \dd \star_{6} \nu  + \dd\star_{5} \tilde \gamma \wedge\star_{6} \delta \\
 &&  -\star_{5}\tilde \gamma \wedge \dd\star_{6} \delta   +\dd\star_{5}\tilde\varpi\wedge\star_{6}\eps+  \star_{5}\tilde \varpi \wedge\dd\star_{6} \eps   - \star_{5} \tilde \psi \wedge  \dd\star_{6}\theta \,.
 \end{eqnarray*}
Notice that $\dd \star_{6} \varphi =0$ since $\star_6 \varphi$ is a 6-form on a six-dimensional manifold. Similarly, we have $\dd \star_{5} \tilde \psi =0$. 
We also compute 
\begin{eqnarray*}
\frac{1}{2} \Flu\wedge \Flu &=&  
  \varphi\,  \tilde \al \wedge  \tilde \varpi \wedge \eps  +  \varphi \, \tilde \psi \,  \tilde \al \wedge \theta + \tilde \be \wedge  \tilde \gamma \wedge \delta \wedge \nu + \tilde \be \wedge  \tilde \varpi \wedge \eps \wedge \nu  \\ 
&&+  \tilde \psi\,  \tilde \be \wedge \theta \wedge \nu  +\tilde \psi \, \tilde\gamma\wedge\theta\wedge\delta+  \tilde \gamma \wedge  \tilde \varpi \wedge \eps \wedge \delta+ \frac{1}{2}  \tilde \gamma \wedge  \tilde \gamma \wedge \delta \wedge \delta \,.
\end{eqnarray*}
After  collecting  the terms according to the subspace $\Lambda^i \mathbb{L} \wedge \Lambda^{4-i} \mathbb E$ in which they take values,  we get the following proposition. 
\begin{prop}\label{MAX}
The   Maxwell equation on the Lorentzian manifold $(\X^{1, 10} = \wi M ^{1, 4} \times M ^6, h=\tilde g +g)$ with 4-form   $\Flu$  given by \textnormal{(\ref{4formdeco})} is equivalent to the following system of equations:
\[
\begin{tabular}{ l || l l l}
Type $(\tilde 2, 6)$ & $\dd \star_{5} \tilde \al \wedge\star_{6} \varphi+ \star_{5} \tilde \be \wedge \dd\star_{6} \nu$ &$=$& $\tilde \psi \, \tilde\gamma\wedge\theta\wedge\delta$ \,, \\\\
Type $(\tilde 3, 5)$ & $\dd\star_{5}\tilde\be\wedge\star_{6}\nu-\star_{5}\tilde\gamma\wedge\dd\star_{6}\delta$ &$=$& $ \tilde \psi \, \tilde \be \wedge \theta \wedge \nu +  \tilde \gamma \wedge  \tilde \varpi \wedge \eps \wedge \delta$ \,, \\\\
Type $(\tilde 4, 4)$ & $\dd\star_{5} \tilde \gamma \wedge \star_{6} \delta  + \star_{5}\tilde\varpi\wedge\dd\star_{6}\eps$ &$=$ & 
 $\tilde \psi \, \varphi\,  \tilde  \al \wedge \theta +  \tilde \be \wedge  \tilde \varpi \wedge \eps \wedge \nu  + \frac{1}{2}  \tilde \gamma \wedge  \tilde \gamma \wedge \delta \wedge \delta$ \,, \\\\
Type $(\tilde 5, 3)$ & $\dd\star_{5} \tilde \varpi \wedge\star_{6}\eps -\star_5 \tilde \psi \wedge\dd\star_{6}\theta$  &$=$ &   $\varphi \, \tilde \al \wedge  \tilde \varpi \wedge \eps +  \tilde \be \wedge  \tilde \gamma \wedge \delta \wedge \nu$ \,.
\end{tabular}
\]
 \end{prop}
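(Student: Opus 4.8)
The plan is to verify the Maxwell equation $\dd\star\Flu=\tfrac12\Flu\wedge\Flu$ one bidegree at a time, using the decomposition $\Lambda^{\bullet}\bb{V}=\bigoplus_i\Lambda^i\bb{L}\wedge\Lambda^{\bullet-i}\bb{E}$. Both sides are $8$-forms on $\X^{1,10}$, hence lie in $\Lambda^8\bb{V}=\bigoplus_i\Lambda^i\bb{L}\wedge\Lambda^{8-i}\bb{E}$; since $\Lambda^i\bb{L}=0$ for $i>5$ and $\Lambda^{8-i}\bb{E}=0$ for $i<2$, only the four summands with $i\in\{2,3,4,5\}$ are nonzero. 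Because this is a direct sum, the single identity $\dd\star\Flu=\tfrac12\Flu\wedge\Flu$ is equivalent to its four projections onto the $(\tilde 2,6)$, $(\tilde 3,5)$, $(\tilde 4,4)$ and $(\tilde 5,3)$ parts, and reading these off from the expressions for $\star\Flu$, $\dd\star\Flu$ and $\tfrac12\Flu\wedge\Flu$ computed above produces exactly the stated system.

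First I would confirm the expression for $\star\Flu$ by applying the identity $\star(\tilde\al\wedge\be)=(-1)^{k(p-\tilde k)}\star_p\tilde\al\wedge\star_q\be$ of Lemma \ref{tools} with $p=5$, $q=6$ to each of the five terms of \eqref{4formdeco}; one checks that every prefactor $(-1)^{k(5-\tilde k)}$ equals $+1$, so $\star\Flu$ carries no extra signs. Next I would compute $\dd\star\Flu$ by writing $\dd=\dd_{\wi M}+\dd_M$ and applying the Leibniz rule to each product $\star_5(\cdot)\wedge\star_6(\cdot)$; the sign $(-1)^{\deg\star_5(\cdot)}$ picked up when $\dd_M$ passes the first factor explains the displayed signs (minus for the $\tilde\gamma$- and $\tilde\psi$-terms, whose $\star_5$-degrees $3$ and $5$ are odd, plus for the $\tilde\be$- and $\tilde\varpi$-terms of even degree $2$ and $4$). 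I would also note that $\dd\star_6\varphi=0$ and $\dd\star_5\tilde\psi=0$ by dimension, which kills two of the potential terms.

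For the right-hand side I would expand $\Flu\wedge\Flu=\sum_{i,j}T_i\wedge T_j$ over the five summands $T_1,\dots,T_5$ of $\Flu$, using that the $T_i$ are $4$-forms and hence commute, so that each unordered pair contributes twice and the global factor $\tfrac12$ leaves cross products with coefficient $1$ but the diagonal with coefficient $\tfrac12$. Discarding every product whose $\bb{L}$-degree exceeds $5$ or whose $\bb{E}$-degree exceeds $6$, among the diagonal terms only $T_3\wedge T_3$ survives (the others overflow a dimension or square an odd-degree factor), contributing $\tfrac12\tilde\gamma\wedge\tilde\gamma\wedge\delta\wedge\delta$, and among the cross products the pairs $(1,2)$, $(1,3)$ and $(4,5)$ overflow and drop out, leaving the seven products listed. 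Sorting each surviving product into the order ``$(\wi M\text{-factor})\wedge(M\text{-factor})$'' introduces graded-commutativity signs that I would track to reproduce the signs on the right-hand sides of the four equations.

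The argument is mechanical, so the only genuine difficulty is disciplined sign bookkeeping in two spots: the factors $(-1)^{k(p-\tilde k)}$ in the Hodge identity of Lemma \ref{tools}, and the Koszul signs incurred while permuting the factors of each wedge in $\Flu\wedge\Flu$ into bidegree-sorted order. Once these are fixed, equating the $(\tilde 2,6)$, $(\tilde 3,5)$, $(\tilde 4,4)$ and $(\tilde 5,3)$ components of $\dd\star\Flu$ and $\tfrac12\Flu\wedge\Flu$ yields the four equations of the statement.
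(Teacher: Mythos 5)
Your proposal is correct and follows essentially the same route as the paper, which likewise computes $\star\Flu$ term by term via the identity $\star(\tilde\al\wedge\be)=(-1)^{k(p-\tilde k)}\star_{p}\tilde\al\wedge\star_{q}\be$ of Lemma \ref{tools}, applies the Leibniz rule (using $\dd\star_6\varphi=0$ and $\dd\star_5\tilde\psi=0$), expands $\tfrac12\Flu\wedge\Flu$ discarding the terms that overflow $\Lambda^5\bb{L}$ or $\Lambda^6\bb{E}$, and then collects components according to the bidegree decomposition of $\Lambda^{8}\bb{V}$. Your sign bookkeeping and the list of surviving diagonal and cross terms agree with the paper's displayed formulas.
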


Of course, the system of equations given in Lemma \ref{CLOS} and Proposition \ref{MAX}  is significantly simplified when some terms of $\Flu$ vanish.  Let us list some of the important cases and write down the corresponding equations.  
\begin{prop}\label{casesMC}
Consider the Lorentzian manifold $(\X^{1, 10} = \wi{M}^{1, 4} \times  M ^6, h=\tilde g+g)$ as above. Then the following hold:  \\
(1) The 4-form $\Flu\in\Omega^{4}(\X)$ defined by 
\begin{equation} \label{MCalpha} 
\Flu =\varphi \tilde \al 
\end{equation} 
satisfies the Maxwell equation and the closedness  condition if and only if $\varphi$ is constant and $\tilde\al$ is closed and coclosed:
\[
\dd \tilde \al =\dd\star_{5} \tilde \al =0\,.
\]
(2) The 4-form $\Flu\in\Omega^{4}(\X)$ defined by 
\begin{equation} \label{MCbeta} 
\Flu=\tilde \be \wedge \nu  
\end{equation} 
satisfies the Maxwell equation and the closedness  condition if and only if $\tilde\be$ and $\nu$ are closed and coclosed:
\[
\dd \tilde \be  = \dd \star_{5} \tilde \be  =0\,, \qquad \dd  \nu   = \dd\star_{6}  \nu   =0\,.
\]
(3)  The 4-form $\Flu\in\Omega^{4}(\X)$ defined by 
\begin{equation} \label{MCgamma} 
\Flu =\tilde \gamma  \wedge \delta   
\end{equation} 
satisfies the Maxwell equation and the closedness  condition if and only if 
\[
\dd \tilde \gamma  = \dd\delta =\dd\star_{6}\delta  =0\,, \qquad
 \dd\star_{5} \tilde \gamma  \wedge \star_{6} \delta =\frac{\tilde \gamma \wedge \tilde \gamma \wedge \delta \wedge \delta }{2}\,.
\]
If $\tilde \gamma \wedge \tilde \gamma=0$, then the last equation implies $\dd \star_5 \tilde \gamma=0$, and it puts no additional constraints on $\delta$. If  $ \tilde \gamma \wedge \tilde \gamma$ is nonzero, the last  condition is equivalent to
\[
\dd\star_{5} \tilde \gamma = \kappa \tilde \gamma \wedge \tilde \gamma \,, \qquad \kappa
\star_{6} \delta =\frac{\delta \wedge \delta }{2}\,,
\]
for some constant $\kappa \in \mathbb R$.   \\
(4)  The 4-form $\Flu\in\Omega^{4}(\X)$ defined by 
\begin{equation} \label{MCvarpi} 
\Flu =\tilde \varpi  \wedge \eps  
\end{equation} 
satisfies the Maxwell equation and the closedness  condition if and only if  $\tilde\varpi$ and $\eps$ are closed and coclosed:
\[
\dd\tilde \varpi   = \dd \star_{5} \tilde \varpi   =0\,, \qquad \dd \eps     =\dd\star_{6} \eps=0\,.
\]
(5)  The 4-form $\Flu\in\Omega^{4}(\X)$ defined by 
\begin{equation} \label{MCtheta} 
\Flu =\tilde \psi \theta 
\end{equation} 
satisfies the Maxwell equation and the closedness  condition if and only if $\tilde \psi$ is constant and $\theta$ is closed and coclosed:
\[
\dd \theta  = \dd\star_{6}\theta  =0\,.
\]
(6)  The 4-form $\Flu\in\Omega^{4}(\X)$ defined by 
\begin{equation}\label{MCsumleft} 
\Flu=\varphi \tilde \al + \tilde \be \wedge \nu 
\end{equation} 
satisfies the Maxwell equation and the closedness  condition if and only if 
\begin{equation}\label{MCsumlefteq}
\dd\tilde  \al =\dd\star_{5}\tilde\be=\dd \nu=0\,,  \quad \dd\varphi=\kappa \nu\,, \quad \dd \tilde \beta = -\kappa \tilde \alpha\,, \quad \dd \star_5 \tilde \alpha = -\lambda \star_5 \tilde \beta\,, \quad \dd\star_6 \nu = \lambda \star_6 \varphi
\end{equation}
for some constants $\kappa, \lambda \in \mathbb R$.
The last four conditions imply 
\[\star_6 \dd \star_6 \dd \varphi = \kappa \lambda \varphi\,, \qquad \star_5 \dd \star_5 \dd \tilde \beta =  \kappa \lambda \tilde \beta \,.\]
(7)  The 4-form $\Flu\in\Omega^{4}(\X)$ defined by 
\begin{equation}\label{MCsumright} 
\Flu=\tilde \varpi \wedge \eps  + \tilde \psi \theta
\end{equation} 
satisfies the Maxwell equation and the closedness  condition if and only if 
\begin{equation}
\dd \theta =\dd \tilde \varpi =\dd\star_{6} \eps=0 \,, \quad \dd \tilde \psi=\kappa \tilde \varpi\,, \quad \dd \epsilon = \kappa \theta\,,\quad  \dd \star_5 \tilde \varpi = \lambda \star_5 \tilde \psi\,, \quad \dd\star_6 \theta = \lambda \star_6 \epsilon \label{MCsumrighteq}
\end{equation}
for some constants $\kappa,\lambda \in \mathbb R$. The last four conditions imply 
\[\star_5 \dd \star_5 \dd \tilde \psi =  \kappa \lambda  \tilde \psi\,, \qquad \star_6 \dd \star_6 \dd \epsilon = -\kappa \lambda \epsilon\,.\]
(8)  The 4-form $\Flu\in\Omega^{4}(\X)$ defined by 
\begin{equation}\label{MCalphtheta} 
\Flu =\varphi \tilde\al  + \tilde \psi \theta
\end{equation} 
satisfies the Maxwell equation and the closedness condition if and only if either $\varphi \tilde\al =0 $  or $\tilde \psi \theta =0 $. Thus, this case reduces to case (5) or case (1), respectively. \\ 
(9) The   4-form $\Flu\in\Omega^{4}(\X)$ defined by 
\begin{equation}\label{MCbnwe} 
\Flu = \tilde \be \wedge \nu + \tilde \varpi \wedge \eps 
\end{equation}
satisfies the Maxwell equation and the closedness  condition if and only if 
\[
\dd\tilde\be=\dd\nu=\dd\tilde\varpi=\dd\eps=0 \,,\quad \dd\star_{6}\nu=\dd\star_{5}\tilde\be=\dd\star_{5}\tilde\varpi =0\,, \quad \star_5 \tilde \varpi \wedge \dd \star_6 \epsilon=\tilde \beta \wedge \tilde \varpi \wedge \epsilon \wedge \nu\,.
\]
If $\epsilon \wedge \nu=0$, then the last equation implies $\dd \star_6 \epsilon=0$. If $\epsilon \wedge \nu$ is nonzero, then the last equation is equivalent to
\[ \dd\star_{6}\eps=\kappa \eps\wedge\nu\,,\qquad \kappa \star_{5}\tilde\varpi=\tilde\be\wedge\tilde\varpi\,,
\]
for some constant $\kappa \in \mathbb R$.  
\end{prop}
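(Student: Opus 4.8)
The plan is to derive both halves of the statement by specializing the general closedness system of Lemma \ref{CLOS} and the Maxwell system of Proposition \ref{MAX} to the flux form \eqref{MCbnwe}, that is, by setting $\varphi=\tilde\al=\tilde\gamma=\delta=\tilde\psi=\theta=0$ and keeping only $\tilde\be,\nu,\tilde\varpi,\eps$. Substituting into Lemma \ref{CLOS}, two of the six equations become vacuous and the remaining four collapse to $\dd\tilde\be\wedge\nu=0$, $\tilde\be\wedge\dd\nu=0$, $\dd\tilde\varpi\wedge\eps=0$ and $\tilde\varpi\wedge\dd\eps=0$. Each of these is a wedge of a form pulled back from $\wi M^{1,4}$ with a form pulled back from $M^6$, hence pointwise an element of some $\Lambda^a\bb{L}\otimes\Lambda^b\bb{E}$, and such a product vanishes if and only if one of its two factors vanishes. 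Because each factor depends on only one of the two sets of variables, a single point at which the $M$-factor is nonzero forces the $\wi M$-factor to vanish identically, and conversely. Thus, provided none of $\tilde\be,\nu,\tilde\varpi,\eps$ vanishes identically (the degenerate subcases reduce to parts (2) or (4) of the Proposition), we obtain exactly $\dd\tilde\be=\dd\nu=\dd\tilde\varpi=\dd\eps=0$.

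Feeding the same substitution into the four type equations of Proposition \ref{MAX}, I expect the types $(\tilde2,6)$, $(\tilde3,5)$ and $(\tilde5,3)$ to reduce to the single mixed wedges $\star_5\tilde\be\wedge\dd\star_6\nu=0$, $\dd\star_5\tilde\be\wedge\star_6\nu=0$ and $\dd\star_5\tilde\varpi\wedge\star_6\eps=0$. Applying the same factorization principle — and using that $\star_5,\star_6$ are isomorphisms, so they never send a nonzero form to zero — these yield $\dd\star_6\nu=0$, $\dd\star_5\tilde\be=0$ and $\dd\star_5\tilde\varpi=0$. The type $(\tilde4,4)$ equation does not trivialize and becomes the coupling relation $\star_5\tilde\varpi\wedge\dd\star_6\eps=\tilde\be\wedge\tilde\varpi\wedge\eps\wedge\nu$; here the $\wi M$-factors already precede the $M$-factors on the right-hand side, so $\tilde\be\wedge\tilde\varpi\wedge\eps\wedge\nu=(\tilde\be\wedge\tilde\varpi)\wedge(\eps\wedge\nu)$ carries no reordering sign, which is what makes the same constant appear in both output relations. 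Collecting everything produces precisely the displayed system. The converse direction is then immediate: substituting these conditions back gives $\dd\Flu=0$ termwise, while the type $(\tilde2,6),(\tilde3,5),(\tilde5,3)$ equations of Proposition \ref{MAX} become $0=0$ and type $(\tilde4,4)$ is the assumed coupling relation.

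The delicate step, and the one I expect to be the main obstacle, is the analysis of the coupling relation. Writing $A:=\star_5\tilde\varpi$ and $B:=\tilde\be\wedge\tilde\varpi$ in $\Omega^4(\wi M)$, and $P:=\dd\star_6\eps$ and $Q:=\eps\wedge\nu$ in $\Omega^4(M)$, the relation reads $A\otimes P=B\otimes Q$ pointwise in $\Lambda^4\bb{L}\otimes\Lambda^4\bb{E}$. If $\eps\wedge\nu=0$, i.e. $Q\equiv0$, then choosing $x_0$ with $\tilde\varpi(x_0)\neq0$ gives $A(x_0)\neq0$ (as $\star_5$ is invertible), whence $A\otimes P=0$ forces $P=\dd\star_6\eps=0$. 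If $\eps\wedge\nu\neq0$, I would use the uniqueness of rank-one tensor decompositions: at $x_0$ with $A(x_0)\neq0$ and $y_1$ with $Q(y_1)\neq0$, linear independence of $A(x_0)$ and $B(x_0)$ would, upon expanding $A(x_0)\otimes P(y_1)=B(x_0)\otimes Q(y_1)$ in a basis, force $Q(y_1)=0$, a contradiction; hence $B(x_0)=\kappa A(x_0)$ for a scalar $\kappa$. Substituting back yields $P(y)=\kappa Q(y)$ for all $y$ with this same $\kappa$, and then the relation at arbitrary $x$ gives $(\kappa A(x)-B(x))\otimes Q(y)=0$, so $B=\kappa A$ everywhere. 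This exhibits $\kappa$ as a single global constant and produces $\dd\star_6\eps=\kappa\,\eps\wedge\nu$ and $\kappa\star_5\tilde\varpi=\tilde\be\wedge\tilde\varpi$, as claimed. The points needing genuine care are verifying that the scalar extracted at $(x_0,y_1)$ is forced to be the same at every point rather than varying, and confirming the absence of a reordering sign so that a single $\kappa$ governs both relations.
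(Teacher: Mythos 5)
Your detailed argument treats only part (9), and for that part it is correct and follows exactly the paper's route: specialize Lemma \ref{CLOS} and Proposition \ref{MAX} to the given flux form, use that a wedge $\tilde\omega\wedge\sigma$ of forms pulled back from the two factors vanishes iff one factor vanishes identically, and analyze the surviving type-$(\tilde 4,4)$ coupling relation. In fact you are more careful than the paper on the one delicate point: the paper's proof works out only case (6) in detail and dismisses the rest with ``the other cases are treated similarly'', whereas your rank-one argument --- $A\otimes P=B\otimes Q$ with $Q\not\equiv 0$ forces $B=\kappa A$ and $P=\kappa Q$ for a single global scalar $\kappa$ --- makes explicit why the equations $\dd\star_{6}\eps=\kappa\,\eps\wedge\nu$ and $\kappa\star_{5}\tilde\varpi=\tilde\be\wedge\tilde\varpi$ share one constant rather than a point-dependent function. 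Your sign check that $\tilde\be\wedge\tilde\varpi\wedge\eps\wedge\nu=(\tilde\be\wedge\tilde\varpi)\wedge(\eps\wedge\nu)$ needs no reordering, and your use of the pointwise injectivity of $\star_5,\star_6$, are both fine, as is the reduction of the degenerate subcases to parts (2) and (4).

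The genuine gap is one of coverage: the statement is a nine-part proposition, and your proposal proves one ninth of it. Parts (1)--(8) are never addressed, and some of them contain content not subsumed by your case-(9) computation. In parts (6) and (7) the mixed closedness equation (e.g.\ $\tilde\al\wedge\dd\varphi+\dd\tilde\be\wedge\nu=0$) and the corresponding mixed Maxwell equation each require your separation argument to extract \emph{two} constants $\kappa$ and $\lambda$, after which one must still verify the implied identities $\star_6\dd\star_6\dd\varphi=\kappa\lambda\varphi$ and $\star_5\dd\star_5\dd\tilde\be=\kappa\lambda\tilde\be$ (and their analogues in (7), including the sign in $\star_6\dd\star_6\dd\eps=-\kappa\lambda\eps$). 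Part (8) has a structurally different conclusion, an either/or dichotomy: it follows by combining the type-$(\tilde 4,4)$ equation, which reduces to $(\tilde\psi\tilde\al)\wedge(\varphi\theta)=0$, with the closedness constraints $\tilde\al\wedge\dd\varphi=0$ and $\dd\tilde\psi\wedge\theta=0$ forcing $\varphi$ and $\tilde\psi$ to be constant when $\tilde\al,\theta\not\equiv 0$ --- a short but separate chain of reasoning. Part (3) needs your rank-one analysis again, now with the factor $\tfrac12$, to get $\dd\star_5\tilde\gamma=\kappa\,\tilde\gamma\wedge\tilde\gamma$ and $\kappa\star_6\delta=\tfrac12\,\delta\wedge\delta$. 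To be fair, your separation principle is precisely the tool all of these require, so the omission is repairable by the same method; but as written the proposal is a complete proof of part (9) only, not of the proposition.
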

\begin{proof}
All the cases  are direct consequences of  Lemma \ref{CLOS} and Proposition \ref{MAX}, i.e. the closedness condition and Maxwell equation for $\Flu$ of the form \eqref{4formdeco}. We show the calculations for (6) in detail. For $\Flu=\varphi \tilde \alpha + \tilde \beta \wedge \nu$, the first three equations of Lemma \ref{CLOS} reduce to 
\[\varphi \dd \tilde \alpha=0\,, \quad \tilde \alpha \wedge \dd \varphi+\dd \tilde \beta \wedge \nu=0\,, \quad \tilde \beta \wedge \dd \nu=0\,,
\] while the last three equations hold automatically.  In particular, we see that $\dd \tilde \alpha =0$ and $\dd \nu=0$ (assuming $\varphi \neq 0$ and $\tilde \beta \neq 0$). We also have $\dd \varphi = \kappa \nu$ and $\dd \tilde \beta = -\kappa \tilde \alpha$ for some constant $\kappa \in \mathbb R$. The first two equations of Proposition \ref{MAX} reduce to 
$\dd \star_5 \tilde \alpha \wedge \star_6 \varphi+\star_5 \tilde \beta \wedge \dd \star_6 \nu = 0$ and  $\dd \star_5 \tilde \beta \wedge \star_6 \nu = 0$, respectively, while the last two hold automatically.  This implies $\dd \star_5 \tilde \beta=0$, $\dd \star_6 \nu= \lambda \star_6 \varphi$ and $\dd \star_5 \tilde \alpha = - \lambda \star_5 \tilde \beta$. All together, we obtain \eqref{MCsumlefteq}.  The other cases are treated similarly.
\end{proof}

\begin{remark}
In comparison with the examination of $(6, 5)$-decomposable supergravity backgrounds presented in \cite{CG},  i.e. Lorentzian manifolds  of the form $\Y=\wi{M}^{1, 5}\times M^5$,  we see that the system of the closedness condition and the Maxwell equation are very similar, although non-identical. In particular, a comparison of our Proposition \ref{spck} with  \cite[Prop. 2.5]{CG} shows that when the 4-form $\Flu$ is determined via  one of the  cases (1), (2), or  (4)-(8), then we obtain very similar constraints. On the other hand, the cases (3) and (9)  are quite different.  For example, in our case $\X=\wi{M}^{1, 4}\times M^6$ the Maxwell equation contains the new term $\tilde \psi \tilde\gamma\wedge\theta\wedge\delta$.  On the other hand, it does not contain the term corresponding to $\tilde\al\wedge\tilde\gamma\wedge\delta$, which appears in the Maxwell equation for  $\Y=\wi{M}^{1, 5}\times M^5$.
\end{remark}

For certain   flux forms, imposing topological restrictions  on $M^6$ may result in additional conditions on $\Flu$. For example, let $\Flu = \varphi \tilde \alpha+\tilde \beta \wedge \nu$ be a 4-form satisfying the Maxwell equation and the closedness condition. We assume that $\tilde \alpha, \tilde \beta, \nu$ are nonzero differential forms, and also that the function $\varphi$ is nonzero at every point in $M^6$.  Equation \eqref{MCsumlefteq} of Proposition \ref{casesMC} implies that $\dd \star_6 \nu = \lambda \star_6 \varphi$ or, equivalently, $\dd \star_6 \nu= \lambda \varphi \vol_M$, for some constant $\lambda \in \mathbb R$. Now, assume that $M^6$ is a closed manifold. We will see that this implies $\lambda=0$. If $\lambda \neq 0$, then there exists a constant $K>0$ such that $|\lambda \varphi| \geq K$. Thus \[ K \vol(M) = K \int_M \vol_M \leq \int_M |\lambda \varphi| \vol_M.\] The function $\lambda \varphi$ is either positive at each point in $M^6$ or negative at each point. Therefore, by Stokes' theorem we deduce that  for a connected closed manifold $M^6$ the right-hand-side is (up to an overall sign) equal to 
\[\int_M \lambda \varphi \vol_M = \int_M \dd\star_6 \nu = \int_{\partial M} \star_6 \nu =0\,.\]  
 This implies $\vol(M)=0$, a contradiction.   In particular, if $\varphi$ is a nonzero constant, we get the following statement (after absorbing the constant $\varphi$ into $\tilde \alpha$).  

\begin{prop} \label{compact}
Let $(\X^{1,10}=\widetilde{M}^{1,4} \times M^6, h=\tilde g+g, \Flu= \tilde \alpha+\tilde \beta \wedge \nu)$ be an eleven-dimensional bosonic supergravity background. If $M^6$ is closed, then $\lambda=0$ in equation \textnormal{(\ref{MCsumlefteq})}, i.e. $\dd \star_5 \tilde{\alpha} =0$ and $\dd \star_6 \nu=0$.
\end{prop}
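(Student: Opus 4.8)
The plan is to extract the single relevant constraint from Proposition \ref{casesMC}(6) and then integrate it over $M^6$ to eliminate the constant $\lambda$. Since the flux form here is $\Flu = \tilde\alpha + \tilde\beta \wedge \nu$, comparison with \eqref{4formdeco} shows that $\varphi$ is the constant function $1$ (the original constant having been absorbed into $\tilde\alpha$). Substituting $\varphi \equiv 1$ into the last condition of \eqref{MCsumlefteq} yields
\[
\dd \star_6 \nu = \lambda \star_6 \varphi = \lambda\, \vol_M\,,
\]
which already isolates $\lambda$ as the scalar factor multiplying the volume form of $M^6$.

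Next I would integrate this identity over the closed oriented manifold $M^6$. On the left-hand side, Stokes' theorem together with $\partial M^6 = \emptyset$ forces $\int_{M^6} \dd \star_6 \nu = 0$. On the right-hand side one obtains $\lambda \int_{M^6} \vol_M = \lambda\, \vol(M)$, where $\vol(M) \neq 0$ because $\vol_M$ is a nowhere-vanishing top-degree form on the oriented manifold $M^6$ (its overall sign, dictated by the ``mostly minus'' convention, being irrelevant for the argument). Comparing the two sides gives $\lambda\, \vol(M) = 0$, and hence $\lambda = 0$.

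Finally, setting $\lambda = 0$ in the two remaining conditions of \eqref{MCsumlefteq}, namely $\dd \star_5 \tilde\alpha = -\lambda \star_5 \tilde\beta$ and $\dd \star_6 \nu = \lambda \star_6 \varphi$, immediately yields $\dd \star_5 \tilde\alpha = 0$ and $\dd \star_6 \nu = 0$, as claimed. I do not expect any genuine obstacle here: the argument is a direct application of Stokes' theorem, and the only structural input is the compactness and boundarylessness of $M^6$ together with the fact that $\varphi$ is constant. This is precisely what removes the need for the positivity and fixed-sign discussion of $\varphi$ that was required in the general non-constant case treated in the paragraph preceding the statement.
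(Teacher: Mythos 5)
Your proof is correct and follows essentially the same route as the paper: both arguments integrate the identity $\dd\star_6\nu=\lambda\,\star_6\varphi$ over the closed manifold $M^6$ and invoke Stokes' theorem to force $\lambda=0$. The paper's surrounding paragraph treats the more general case of a nowhere-vanishing, possibly non-constant $\varphi$ (whence the lower-bound and fixed-sign discussion), and the proposition is its specialization to constant $\varphi$ — exactly the simplification you identify.
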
 
A similar phenomenon occurs for the flux form $\Flu = \tilde \varpi \wedge \epsilon + \theta$. 
\begin{prop} 
Let $(\X^{1,10}=\widetilde{M}^{1,4} \times M^6, h=\tilde g+g, \Flu= \tilde \varpi \wedge \epsilon+\theta)$ be an eleven-dimensional bosonic supergravity background. If $\widetilde{M}^{1,4}$ is closed, then $\lambda=0$ in equation \textnormal{\eqref{MCsumrighteq}}, i.e. $\dd \star_5 \tilde{\varpi} =0$ and $\dd \star_6 \theta=0$.
\end{prop}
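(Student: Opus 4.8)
The plan is to run the exact dual of the integration argument that precedes Proposition \ref{compact}, interchanging the roles of the two factors $\widetilde{M}^{1,4}$ and $M^6$. Since $\Flu = \tilde\varpi \wedge \epsilon + \theta$ is an instance of case (7) of Proposition \ref{casesMC} (with $\tilde\psi$ a nonzero constant absorbed into the $\theta$-slot), all the constraints \eqref{MCsumrighteq} are available. The only one of these that couples the constant $\lambda$ to the \emph{Lorentzian} factor is
\[
\dd\star_5\tilde\varpi = \lambda\,\star_5\tilde\psi\,,
\]
and this is the equation I would isolate, precisely because it is $\widetilde{M}^{1,4}$—and not $M^6$—that is assumed closed. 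As $\tilde\psi$ is a nonzero constant, $\star_5\tilde\psi = \tilde\psi\,\vol_{\widetilde M}$ is a constant multiple of a nowhere-vanishing top-degree form on $\widetilde{M}^{1,4}$, so the displayed identity reads $\dd\star_5\tilde\varpi = \lambda\tilde\psi\,\vol_{\widetilde M}$.

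Next I would integrate this identity over $\widetilde{M}^{1,4}$. Since $\widetilde{M}^{1,4}$ is oriented and closed (compact without boundary), Stokes' theorem forces $\int_{\widetilde M}\dd\star_5\tilde\varpi = 0$, while the right-hand side integrates to $\lambda\tilde\psi\int_{\widetilde M}\vol_{\widetilde M} = \lambda\tilde\psi\,\vol(\widetilde M)$. Because $\vol_{\widetilde M}$ is nowhere vanishing on the compact oriented manifold $\widetilde{M}^{1,4}$, its total integral $\vol(\widetilde M)$ is nonzero, and $\tilde\psi \neq 0$ by hypothesis; hence $\lambda = 0$. Substituting $\lambda = 0$ back into \eqref{MCsumrighteq} gives $\dd\star_5\tilde\varpi = 0$ and $\dd\star_6\theta = \lambda\star_6\epsilon = 0$, which is the assertion.

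The computation itself is routine; the one point that deserves care is the claim $\vol(\widetilde M)\neq 0$, i.e. that a closed \emph{Lorentzian} manifold carries a definite-sign top-degree volume form. This is exactly where the hypothesis that $\widetilde{M}^{1,4}$ is closed (rather than merely complete) does the work: compactness guarantees both that the boundary term in Stokes' theorem is absent and that $\int_{\widetilde M}\vol_{\widetilde M}$ is a finite nonzero number, paralleling the use of the compactness of $M^6$ in Proposition \ref{compact}. One should also keep in mind that the hypothesis is non-vacuous, since closed orientable Lorentzian manifolds in this dimension do exist. Finally, if one preferred to allow $\tilde\psi$ to be a non-constant but nowhere-vanishing function on $\widetilde{M}^{1,4}$, the same conclusion would follow from the constant-sign / absolute-value estimate used before Proposition \ref{compact}, applied to $\lambda\tilde\psi$ in place of $\lambda\varphi$.
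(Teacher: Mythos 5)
Your argument is correct and coincides with the paper's own proof: both isolate the equation $\dd\star_5\tilde\varpi=\lambda\,\vol_{\widetilde M}$ from \eqref{MCsumrighteq} (with $\tilde\psi=1$), integrate over the closed manifold $\widetilde{M}^{1,4}$, and invoke Stokes' theorem together with $\vol(\widetilde M)\neq 0$ to force $\lambda=0$. The extra remarks on why compactness is needed and on the non-constant $\tilde\psi$ variant are sensible but do not change the substance.
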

\begin{proof}
By assumption, we have $\tilde \psi = 1$ in \eqref{MCsumrighteq} which implies $\dd \star_5 \tilde \varpi = \lambda \star_5 1 = \lambda \vol_{\widetilde{M}}$. We have
\[ \lambda \vol(\widetilde{M}) = \int_{\widetilde{M}} \lambda \vol_{\widetilde{M}}  = \int_{\widetilde M} \dd \star_5 \tilde \varpi= \int_{\partial \widetilde{M}} \star_5 \tilde \varpi =0\,, \]
where the last equality follows since $\widetilde{M}^{1,4}$ has been assumed to be closed. 
This implies $\lambda=0$. 
\end{proof}

Before we treat the supergravity Einstein equation, let us consider one more special case for which the Maxwell equation significantly simplifies. Namely,  assume that $\tilde \psi \theta=0$ and that $\tilde \alpha,\tilde \beta,\tilde \gamma, \tilde \varpi$ share a common factor $\tilde \omega \in \Omega^1(\widetilde{M}^{1,4})$, meaning that $\tilde \alpha=\tilde \omega \wedge \hat \alpha, \tilde \beta = \tilde \omega \wedge \hat \beta, \tilde \gamma=\tilde \omega \wedge \hat \gamma, \tilde \varpi = \hat \varpi \tilde \omega$.  Since $\tilde \omega$ is a 1-form, we have $\tilde \omega \wedge \tilde \omega =0$. Therefore the right-hand-sides in Proposition \ref{MAX} vanish. We summarize this in a proposition that we will take advantage of in Section \ref{largeexample}. 

\begin{prop} \label{MaxwellProduct}
Consider the Lorentzian manifold $(\X^{1,10}=\widetilde{M}^{1,4} \times M^6, h=\tilde g+g)$ with 4-form $\Flu = \varphi \tilde \alpha + \tilde \beta \wedge \nu + \tilde \gamma \wedge \delta + \tilde \varpi \wedge \epsilon$ 
and assume that $\tilde \alpha=\tilde \omega \wedge \hat \alpha, \tilde \beta = \tilde \omega \wedge \hat \beta, \tilde \gamma=\tilde \omega \wedge \hat \gamma, \tilde \varpi = \hat \varpi \tilde \omega$ for a non-trivial 1-form $\tilde \omega \in \Omega^1(\widetilde{M}^{1,4})$ and  $\hat \alpha \in \Omega^3(\widetilde{M}^{1,4})$, $\hat \beta \in \Omega^2(\widetilde{M}^{1,4})$,  $\hat \gamma \in \Omega^1(\widetilde{M}^{1,4}), \hat \varpi \in C^{\infty}(\widetilde{M}^{1,4})$. Then, the Maxwell equation is equivalent to the following system of equations:
\begin{align*}
\dd \star_{5} \tilde \al \wedge\star_{6} \varphi+ \star_{5} \tilde \be \wedge \dd\star_{6} \nu &= 0\,, \qquad 
\dd\star_{5}\tilde\be\wedge\star_{6}\nu-\star_{5}\tilde\gamma\wedge\dd\star_{6}\delta = 0\,,\\
\dd\star_{5} \tilde \gamma \wedge \star_{6} \delta  + \star_{5}\tilde\varpi\wedge\dd\star_{6}\eps &= 0\,, \qquad 
\dd\star_{5} \tilde \varpi \wedge\star_{6}\eps = 0\,.
\end{align*}
\end{prop}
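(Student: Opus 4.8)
The plan is to specialize Proposition \ref{MAX} to the present flux form and show that the factorization hypothesis kills every source term on its right-hand sides, leaving exactly the four homogeneous equations claimed. Since Proposition \ref{MAX} already expresses the Maxwell equation as an equivalence with its four typed equations, it suffices to simplify those four equations under the standing assumptions; the asserted equivalence then follows with no further work.

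First I would observe that the flux form here lacks the $\tilde\psi\theta$ component, so I may set $\tilde\psi=0$ and $\theta=0$ in Proposition \ref{MAX}. This removes the terms $\tilde\psi\,\tilde\gamma\wedge\theta\wedge\delta$, $\tilde\psi\,\tilde\beta\wedge\theta\wedge\nu$ and $\tilde\psi\,\varphi\,\tilde\alpha\wedge\theta$ from the right-hand sides of the first three equations, and the summand $\star_5\tilde\psi\wedge\dd\star_6\theta$ from the left-hand side of the Type $(\tilde 5,3)$ equation. What remains on the right are the five terms
\[
\tilde\gamma\wedge\tilde\varpi\wedge\eps\wedge\delta,\quad \tilde\beta\wedge\tilde\varpi\wedge\eps\wedge\nu,\quad \tfrac12\,\tilde\gamma\wedge\tilde\gamma\wedge\delta\wedge\delta,\quad \varphi\,\tilde\alpha\wedge\tilde\varpi\wedge\eps,\quad \tilde\beta\wedge\tilde\gamma\wedge\delta\wedge\nu,
\]
each of which is a wedge of exactly two of the Lorentzian factors $\tilde\alpha,\tilde\beta,\tilde\gamma,\tilde\varpi$ with forms pulled back from $M^6$.

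The key step is then immediate from the hypothesis. Writing $\tilde\alpha=\tilde\omega\wedge\hat\alpha$, $\tilde\beta=\tilde\omega\wedge\hat\beta$, $\tilde\gamma=\tilde\omega\wedge\hat\gamma$ and $\tilde\varpi=\hat\varpi\,\tilde\omega$, every pairwise product $\tilde\gamma\wedge\tilde\varpi$, $\tilde\beta\wedge\tilde\varpi$, $\tilde\gamma\wedge\tilde\gamma$, $\tilde\alpha\wedge\tilde\varpi$, $\tilde\beta\wedge\tilde\gamma$ appearing above contains the factor $\tilde\omega\wedge\tilde\omega=0$ and hence vanishes; for example $\tilde\gamma\wedge\tilde\varpi=\hat\varpi\,\tilde\omega\wedge\hat\gamma\wedge\tilde\omega=0$ and $\tilde\gamma\wedge\tilde\gamma=\tilde\omega\wedge\hat\gamma\wedge\tilde\omega\wedge\hat\gamma=0$. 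Consequently all right-hand sides vanish and the four typed equations collapse to the stated homogeneous system.

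I do not expect a real obstacle here, since the argument is pure bookkeeping once one trusts the formula for $\tfrac12\Flu\wedge\Flu$ computed in Section \ref{closuremaxwell}. The only point deserving a line of justification is that $\tfrac12\Flu\wedge\Flu$ contributes no surviving self-wedge of a single Lorentzian factor: the diagonal contributions $\tilde\alpha\wedge\tilde\alpha$, $\tilde\beta\wedge\tilde\beta$ and $\tilde\varpi\wedge\tilde\varpi$ all vanish for degree or parity reasons on the five-dimensional factor $\widetilde M^{1,4}$, so the only self-product that could in principle survive, namely $\tilde\gamma\wedge\tilde\gamma$, is precisely one of the terms already eliminated by the common factor $\tilde\omega$.
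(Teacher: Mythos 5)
Your proposal is correct and follows exactly the paper's own argument: the paper likewise specializes Proposition \ref{MAX} with $\tilde\psi\theta=0$ and notes that every surviving right-hand-side term contains a wedge of two Lorentzian factors, each carrying the common factor $\tilde\omega$, so $\tilde\omega\wedge\tilde\omega=0$ kills them all. Your extra remark on why the diagonal self-wedges other than $\tilde\gamma\wedge\tilde\gamma$ are already absent from $\tfrac12\Flu\wedge\Flu$ is a harmless (and accurate) elaboration of the paper's displayed computation.
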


 In particular, Proposition \ref{MaxwellProduct} shows that if $\tilde \alpha, \tilde \beta, \tilde \gamma, \tilde \varpi, \nu, \delta,\epsilon$  are  coclosed on their respective manifolds and if $\tilde \alpha, \tilde \beta, \tilde \gamma, \tilde \varpi$ share a common factor $\tilde \omega$ as above,  then $\Flu = \varphi \tilde \alpha + \tilde \beta \wedge \nu + \tilde \gamma \wedge \delta + \tilde \varpi \wedge \epsilon$ satisfies the Maxwell equation.

\section{The supergravity Einstein equation} \label{einstein}
In this section we present the supergravity Einstein equation for an oriented  Lorentzian manifold of the form $\X=\wi{M}^{1, 4}\times M^6$, endowed with the product metric $h=\tilde g+g$ and the 4-form $\Flu$ defined by  (\ref{4formdeco}).  We recall that the   supergravity Einstein equation has the form
\begin{equation}\label{EINsugra}
\Ric_{h}(X,Y)=-\frac{1}{2} \langle X\lrcorner \Flu, Y\lrcorner \Flu\rangle_{h} +\frac{1}{6}h(X,Y) \|\Flu\|^2_{h}\,,
\end{equation}
where $X, Y$ are vector fields on $\X^{1, 10}$. Note that Lemma \ref{tools} implies the following: 
\begin{lemma}
Let $\Flu$ be the 4-form on $(\X^{1, 10} = \wi{M}^{1,4}\times M^6, h=\tilde g+ g)$ defined by \textnormal{(\ref{4formdeco})}.  Then, 
\begin{equation}
\label{f2norm} 
\begin{split}
\| \Flu \|^2 _h &
= \varphi^2 \| \tilde \al \|^2 _{ \tilde g }
+\| \tilde \be  \|^2_{ \tilde g } \|\nu \|^2_g 
+ \| \tilde \gamma \|^2_{ \tilde g } \| \delta \|^2 _g 
+ \| \tilde \varpi \|^2_{ \tilde g } \|\eps\|^2 _g 
+ \tilde \psi^2\|\theta  \|^2_g \,.
\end{split} 
\end{equation} 
\end{lemma}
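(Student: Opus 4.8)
The plan is to compute $\|\Flu\|_h^2 = \langle \Flu, \Flu \rangle_h$ by expanding the bilinear form, discarding the cross terms via an orthogonality argument, and then applying the product norm formula of Lemma \ref{tools} to each surviving diagonal term. First I would note that the five summands of $\Flu$ in \eqref{4formdeco} lie in pairwise distinct pieces of the pointwise orthogonal decomposition
\[
\Lambda^{4}\bb{V}=\Lambda ^4 \bb{L} \oplus (\Lambda ^3 \bb{L} \wedge \Lambda^{1}\bb{E}) \oplus (\Lambda ^2 \bb{L} \wedge \Lambda ^2 \bb{E}) \oplus (\Lambda^{1}\bb{L} \wedge \Lambda ^3 \bb{E})\oplus \Lambda ^4 \bb{E}\,.
\]
Indeed $\varphi \tilde \al \in \Lambda^4 \bb{L}$, $\tilde \be \wedge \nu \in \Lambda^3 \bb{L} \wedge \Lambda^1 \bb{E}$, $\tilde \gamma \wedge \delta \in \Lambda^2 \bb{L} \wedge \Lambda^2 \bb{E}$, $\tilde \varpi \wedge \eps \in \Lambda^1 \bb{L} \wedge \Lambda^3 \bb{E}$, and $\tilde \psi \theta \in \Lambda^4 \bb{E}$, where the scalar factors $\varphi$ and $\tilde \psi$ are read as $0$-forms on $M$ and $\wi M$, respectively.

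The key step is the mutual $h$-orthogonality of these five pieces. I would verify this pointwise in a pseudo-orthonormal coframe adapted to the splitting $\bb{V} = \bb{L} \oplus \bb{E}$: any two decomposable $4$-forms lying in different summands use a different number of factors from $\bb{L}$, hence are built from distinct multi-indices of basis covectors, and therefore pair to zero under $\langle \cdot, \cdot \rangle_h$, which is diagonal in such a coframe (the metric $h = \tilde g + g$ is block-diagonal and does not mix $\bb{L}$ and $\bb{E}$ directions). Consequently all cross terms in the expansion of $\langle \Flu, \Flu \rangle_h$ vanish and only the five diagonal contributions survive.

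Finally, I would evaluate each diagonal term using the identity $\langle \tilde \al \wedge \be, \tilde \al \wedge \be \rangle_h = \langle \tilde \al, \tilde \al \rangle_{\tilde g} \langle \be, \be \rangle_g$ from Lemma \ref{tools}. Reading $\varphi, \tilde \psi$ as scalar ($0$-form) factors gives $\langle \varphi \tilde \al, \varphi \tilde \al \rangle_h = \varphi^2 \|\tilde \al\|_{\tilde g}^2$ and $\langle \tilde \psi \theta, \tilde \psi \theta \rangle_h = \tilde \psi^2 \|\theta\|_g^2$, while the three middle terms yield $\|\tilde \be\|_{\tilde g}^2 \|\nu\|_g^2$, $\|\tilde \gamma\|_{\tilde g}^2 \|\delta\|_g^2$, and $\|\tilde \varpi\|_{\tilde g}^2 \|\eps\|_g^2$. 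Summing these reproduces exactly \eqref{f2norm}.

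The main obstacle is nothing more than making the orthogonality claim precise; once the bigrading by $\Lambda^i \bb{L} \wedge \Lambda^{4-i} \bb{E}$ is known to be $h$-orthogonal, the computation is immediate and the lemma becomes a direct corollary of Lemma \ref{tools}. The only point requiring care is the treatment of the scalar factors $\varphi$ and $\tilde \psi$ as $0$-forms, so that the product formula applies verbatim to the first and last terms.
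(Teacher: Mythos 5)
Your proposal is correct and follows essentially the same route as the paper: the paper derives the lemma directly from Lemma \ref{tools} together with the orthogonality of the decomposition $\Lambda^{4}\bb{V}=\bigoplus_i \Lambda^{i}\bb{L}\wedge\Lambda^{4-i}\bb{E}$ stated in the Preliminaries, which is exactly your argument that the cross terms vanish and each diagonal term factors as $\langle\tilde\al\wedge\be,\tilde\al\wedge\be\rangle_h=\langle\tilde\al,\tilde\al\rangle_{\tilde g}\langle\be,\be\rangle_g$. Your explicit treatment of $\varphi$ and $\tilde\psi$ as $0$-form factors is a sound way to make the first and last terms fit the same product formula.
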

Since $\X^{1, 10} = \wi{M}^{1,4}\times M^6$ is a direct product of pseudo-Riemannian manifolds  we have
\begin{eqnarray*}
\Ric_{h}(X,Y)&=&\Ric_{g}(X, Y)\,,\quad\forall \ X, Y\in\Gamma(TM^6)\,,\\
\Ric_{h}(\tilde X,\tilde Y)&=&\Ric_{\tilde g}(\tilde X, \tilde Y)\,,\quad\forall \ \tilde X, \tilde Y\in\Gamma(T\wi M^{1,4})\,,\\
\Ric_{h}(X, \tilde Y)&=& 0\,,  \  \quad\quad\quad\quad\quad\forall  \   X\in\Gamma(TM^6), \  \tilde Y\in\Gamma(T\wi M^{1,4})\,.
\end{eqnarray*}
This lets us split the equation (\ref{EINsugra}) into three parts. 
By using (\ref{f2norm}) we obtain explicitly each of the parts of the supergravity Einstein equation.  In particular:

For any $X, Y  \in \Gamma(TM^6) $ we compute
\begin{equation}
\label{HHRic}
\begin{split}
\Ric_{h}(X,Y)&=\frac{\varphi^2 \| \tilde{\alpha} \|^2_{ \tilde{g}}}{6}g(X,Y)\\
&+\left(\frac{\|\nu\|^2_g}{6}g(X,Y)-\frac{1}{2}\nu(X)\nu(Y)\right) \| \tilde{\beta} \|^2_{ \tilde{g}}\\
&+\left(\frac{\|\delta\|^2_g}{6}g(X,Y)-\frac{1}{2}\langle X\lrcorner \delta,Y\lrcorner \delta\rangle_g\right) \| \tilde{\gamma} \|^2_{ \tilde{g}}\\
&+\left(\frac{\|\epsilon\|^2_g}{6}g(X,Y)-\frac{1}{2}\langle X\lrcorner \epsilon,Y\lrcorner \epsilon\rangle_g \right) \| \tilde{\varpi} \|^2_{ \tilde{g}}\\
&+\left(\frac{\|\theta\|^2_g}{6}g(X,Y)-\frac{1}{2}\langle X\lrcorner \theta,Y\lrcorner \theta \rangle_g\right) \tilde \psi^2 .
\end{split}.
\end{equation}

For any $\tilde X, \tilde Y  \in \Gamma(T\wi M^{1,4}) $ we obtain 
\begin{equation}
\label{VVRic}
\begin{split}
\Ric_{h}(\tilde{X},\tilde{Y})&=
\left(\frac{\| \tilde{\alpha} \|^2_{ \tilde{g}}}{6}\tilde{g}(\tilde{X},\tilde{Y})-\frac{1}{2}\langle X\lrcorner \tilde{\alpha},Y\lrcorner \tilde{\alpha} \rangle_{\tilde{g}}\right) \varphi^2\\
&+\left(\frac{\|\tilde{\beta}\|^2_{\tilde{g}}}{6}\tilde{g}(\tilde{X},\tilde{Y})-\frac{1}{2}\langle \tilde{X}\lrcorner \tilde{\beta},\tilde{Y}\lrcorner \tilde{\beta}\rangle_{\tilde{g}}\right) \| \nu \|^2_{ g}\\
&+\left(\frac{\|\tilde{\gamma}\|^2_{\tilde{g}}}{6}\tilde{g}(\tilde{X},\tilde{Y})-\frac{1}{2}\langle \tilde{X}\lrcorner \tilde{\gamma},\tilde{Y}\lrcorner \tilde{\gamma}\rangle_{\tilde{g}}\right) \| \delta \|^2_{ g}\\
&+\left(\frac{\|\tilde{\varpi}\|^2_{\tilde{g}}}{6}\tilde{g}(\tilde{X},\tilde{Y})-\frac{1}{2}\tilde{\varpi}(\tilde{X})\tilde{\varpi}(\tilde{Y}) \right) \| \epsilon \|^2_{ g}\\
&+\frac{\tilde \psi^2\|\theta\|^2_g}{6}\tilde{g}(\tilde{X},\tilde{Y}).
\end{split}
\end{equation}

Finally, for any $X \in \Gamma(TM^6) $ and $\tilde Y \in \Gamma(T\wi M^{1,4} )$ we get the following condition:
\begin{equation}
\label{VHRic}
\begin{split}
0=\Ric_{h}(X,\tilde{Y})&=\frac{1}{2}\Big( \varphi \nu(X) \langle \tilde{\beta}, \tilde{Y}\lrcorner \tilde{\alpha} \rangle_{\tilde{g}}
-\langle \tilde{\gamma}\wedge (X\lrcorner \delta), (\tilde{Y}\lrcorner\tilde{\beta})\wedge\nu\rangle_h
\\&\qquad +\langle \tilde{\varpi}\wedge(X\lrcorner\epsilon), (\tilde{Y}\lrcorner\tilde{\gamma})\wedge\delta\rangle_h
-\tilde \psi \tilde{\varpi}(\tilde{Y}) \langle X\lrcorner \theta, \epsilon\rangle_g
\Big).
\end{split}
\end{equation}
As a summary, we state the following:
\begin{theorem} \label{EINgeneral}
Consider the manifold $\X^{1,10}=\widetilde{M}^{1,4} \times M^6$ with the product metric $h=\tilde g + g$ where $\tilde g$ is a Lorentzian metric on $\widetilde{M}^{1,4}$ and $g$ a Riemannian metric on $M^6$, and let $\Flu$ be the 4-form defined by \textnormal{\eqref{4formdeco}}. Then the eleven-dimensional supergravity Einstein equation \eqref{EINsugra} decomposes into the equations \eqref{HHRic}, \eqref{VVRic} and \eqref{VHRic}. 
\end{theorem}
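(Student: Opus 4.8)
The plan is to verify the three displayed equations \eqref{HHRic}, \eqref{VVRic}, \eqref{VHRic} by evaluating both sides of the supergravity Einstein equation \eqref{EINsugra} on the three types of vector pairs distinguished by the product structure. The starting point is the block form of the Ricci tensor already recorded above: since $h=\tilde g+g$ is a direct product metric, $\Ric_h$ restricts to $\Ric_{\tilde g}$ on $T\wi M^{1,4}$, to $\Ric_g$ on $TM^6$, and vanishes on mixed pairs. Hence \eqref{EINsugra} splits into a horizontal--horizontal, a vertical--vertical, and a mixed component, and it suffices to compute the right-hand side $-\tfrac12\langle X\lrcorner\Flu, Y\lrcorner\Flu\rangle_h+\tfrac16 h(X,Y)\|\Flu\|^2_h$ in each case. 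The norm $\|\Flu\|^2_h$ is supplied once and for all by \eqref{f2norm}.

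The essential computational input is the behaviour of the interior product on the decomposable form \eqref{4formdeco}. Using the graded Leibniz rule $\iota_Z(\omega\wedge\eta)=(\iota_Z\omega)\wedge\eta+(-1)^{|\omega|}\omega\wedge(\iota_Z\eta)$, I would record that for $X\in\Gamma(TM^6)$ only the $M^6$-factors are contracted,
\[
X\lrcorner\Flu=-\nu(X)\tilde\beta+\tilde\gamma\wedge(X\lrcorner\delta)-\tilde\varpi\wedge(X\lrcorner\epsilon)+\tilde\psi\,(X\lrcorner\theta),
\]
while for $\tilde Y\in\Gamma(T\wi M^{1,4})$ only the $\wi M$-factors are contracted,
\[
\tilde Y\lrcorner\Flu=\varphi\,(\tilde Y\lrcorner\tilde\alpha)+(\tilde Y\lrcorner\tilde\beta)\wedge\nu+(\tilde Y\lrcorner\tilde\gamma)\wedge\delta+\tilde\varpi(\tilde Y)\,\epsilon.
\]
The second ingredient is that the four summands in each expression lie in pairwise distinct blocks $\Lambda^a\bb L\wedge\Lambda^{3-a}\bb E$ of the bidegree decomposition recalled in Section \ref{preliminaries}, hence are mutually $h$-orthogonal, and that the $h$-inner product of a wedge $\tilde\omega\wedge\omega$ factorizes as $\langle\tilde\omega,\tilde\omega\rangle_{\tilde g}\langle\omega,\omega\rangle_g$ by Lemma \ref{tools}.

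For the two pure sectors this orthogonality annihilates every cross term, so $\langle X\lrcorner\Flu, Y\lrcorner\Flu\rangle_h$ (respectively $\langle\tilde X\lrcorner\Flu,\tilde Y\lrcorner\Flu\rangle_h$) collapses to a sum of four diagonal terms. Substituting these together with \eqref{f2norm} into the right-hand side of \eqref{EINsugra} and grouping the result by its common scalar factors yields \eqref{HHRic} and \eqref{VVRic}. One checks here that the summand contracted away in a given sector --- namely $\varphi\tilde\alpha$ in the horizontal sector and $\tilde\psi\theta$ in the vertical sector --- contributes only through the $\tfrac16$ trace term, which accounts for the isolated first and last lines of those two displays.

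The mixed sector is the one demanding care, and I expect it to be the main obstacle. Here $h(X,\tilde Y)=0$ and $\Ric_h(X,\tilde Y)=0$, so the equation collapses to $\langle X\lrcorner\Flu,\tilde Y\lrcorner\Flu\rangle_h=0$. Now the two contracted forms are drawn from complementary halves of $\Flu$, and matching them block by block produces a ``staircase'' pairing: the $\Lambda^3\bb L$ piece $-\nu(X)\tilde\beta$ pairs with $\varphi(\tilde Y\lrcorner\tilde\alpha)$, the $\Lambda^2\bb L\wedge\Lambda^1\bb E$ piece $\tilde\gamma\wedge(X\lrcorner\delta)$ with $(\tilde Y\lrcorner\tilde\beta)\wedge\nu$, the $\Lambda^1\bb L\wedge\Lambda^2\bb E$ piece $-\tilde\varpi\wedge(X\lrcorner\epsilon)$ with $(\tilde Y\lrcorner\tilde\gamma)\wedge\delta$, and the $\Lambda^3\bb E$ piece $\tilde\psi(X\lrcorner\theta)$ with $\tilde\varpi(\tilde Y)\epsilon$. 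Tracking the Leibniz signs in these four pairings, together with the prefactor $-\tfrac12$, reproduces exactly the four signed terms inside the bracket of \eqref{VHRic}. The only genuine risk is sign errors in the Leibniz rule and in the identification of matching blocks, so I would double-check each of the four pairings against the orthogonal decomposition of $\Lambda^4\bb V$ displayed in Section \ref{preliminaries}.
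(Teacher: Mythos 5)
Your proposal is correct and follows essentially the same route as the paper, which obtains \eqref{HHRic}, \eqref{VVRic} and \eqref{VHRic} precisely by splitting $\Ric_h$ according to the product structure, contracting $\Flu$ with vectors from each factor, and using the $h$-orthogonality of the bidegree blocks $\Lambda^a\bb{L}\wedge\Lambda^{4-a}\bb{E}$ together with \eqref{f2norm}. Your interior-product formulas and the four ``staircase'' pairings in the mixed sector, including all signs, agree with the paper's displayed equations.
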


Regarding  the supergravity Einstein equation for the various special cases of $\Flu$ discussed in Proposition \ref{casesMC}, we present the following result (all equations below hold for general vector fields $X,Y$ on $M^6$ and $\tilde X, \tilde Y$ on $\widetilde{M}^{1,4}$, which for brevity we will not repeat). 
\begin{prop}\label{spck}
Consider the Lorentzian manifold $(\X^{1, 10} = \wi{M}^{1, 4} \times  M ^6, h=\tilde g+g)$. 

(1) The 4-form $\Flu\in\Omega^{4}(\X)$ defined by 
\begin{equation} \label{spcasealpha} 
\Flu =\tilde \al 
\end{equation} 
satisfies the Einstein condition if and only if the following equations hold: 
\begin{equation}
\label{Ricalpha}
\begin{split}
\Ric_{h}(X,Y)&=\frac{\| \tilde{\alpha} \|^2_{ \tilde{g}}}{6}g(X,Y)\,,\\
\Ric_{h}(\tilde{X},\tilde{Y})&=\frac{\| \tilde{\alpha} \|^2_{ \tilde{g}}}{6}\tilde{g}(\tilde{X},\tilde{Y})-\frac{1}{2}\langle X\lrcorner \tilde{\alpha},Y\lrcorner \tilde{\alpha} \rangle_{\tilde{g}}\,.
\end{split}
\end{equation}

(2) The 4-form $\Flu\in\Omega^{4}(\X)$ defined by 
\begin{equation} \label{spcasebeta} 
\Flu=\tilde \be \wedge \nu  
\end{equation} 
satisfies the Einstein condition if and only if the following equations hold:
\begin{equation}
\label{Ricbeta}
\begin{split}
\Ric_{h}(X,Y)&=\left(\frac{\|\nu\|^2_g}{6}g(X,Y)-\frac{1}{2}\nu(X)\nu(Y)\right) \| \tilde{\beta} \|^2_{ \tilde{g}}\,,\\
\Ric_{h}(\tilde{X},\tilde{Y})&=\left(\frac{\|\tilde{\beta}\|^2_{\tilde{g}}}{6}\tilde{g}(\tilde{X},\tilde{Y})-\frac{1}{2}\langle \tilde{X}\lrcorner \tilde{\beta},\tilde{Y}\lrcorner \tilde{\beta}\rangle_{\tilde{g}}\right) \| \nu \|^2_{ g}\,.
\end{split}
\end{equation}

(3)  The 4-form $\Flu\in\Omega^{4}(\X)$ defined by 
\begin{equation} \label{spcasegamma} 
\Flu =\tilde \gamma  \wedge \delta   
\end{equation} 
satisfies the Einstein condition if and only if the following equations hold:
\begin{equation}
\label{Ricgamma}
\begin{split}
\Ric_{h}(X,Y)&=\left(\frac{\|\delta\|^2_g}{6}g(X,Y)-\frac{1}{2}\langle X\lrcorner \delta, Y\lrcorner \delta\rangle_g\right) \| \tilde{\gamma} \|^2_{ \tilde{g}}\,,\\
\Ric_{h}(\tilde{X},\tilde{Y})&=\left(\frac{\|\tilde{\gamma}\|^2_{\tilde{g}}}{6}\tilde{g}(\tilde{X},\tilde{Y})-\frac{1}{2}\langle \tilde{X}\lrcorner \tilde{\gamma},\tilde{Y}\lrcorner \tilde{\gamma}\rangle_{\tilde{g}}\right) \| \delta \|^2_{ g}\,.
\end{split}
\end{equation}

(4)  The 4-form $\Flu\in\Omega^{4}(\X)$ defined by 
\begin{equation} \label{spcasevarpi} 
\Flu =\tilde \varpi  \wedge \eps  
\end{equation} 
satisfies the Einstein condition if and only if the following equations hold:
\begin{equation}
\label{Ricvarpi}
\begin{split}
\Ric_{h}(X,Y)&=\left(\frac{\|\epsilon\|^2_g}{6}g(X,Y)-\frac{1}{2}\langle X\lrcorner \epsilon, Y\lrcorner \epsilon\rangle_g\right) \| \tilde{\varpi} \|^2_{ \tilde{g}}\,,\\
\Ric_{h}(\tilde{X},\tilde{Y})&=\left(\frac{\|\tilde{\varpi}\|^2_{\tilde{g}}}{6}\tilde{g}(\tilde{X},\tilde{Y})-\frac{1}{2}\tilde \varpi(\tilde X) \tilde \varpi(\tilde Y)\right) \| \epsilon \|^2_{ g}\,.
\end{split}
\end{equation}  

(5)  The 4-form $\Flu\in\Omega^{4}(\X)$ defined by 
\begin{equation} \label{spcasetheta} 
\Flu =\theta 
\end{equation} 
satisfies the Einstein condition if and only if the following equations hold: 
\begin{equation}
\label{Rictheta}
\begin{split}
\Ric_{h}(X,Y)&=\frac{\|\theta\|^2_g}{6}g(X,Y)-\frac{1}{2}\langle X\lrcorner \theta,Y\lrcorner \theta \rangle_g\,,\\
\Ric_{h}(\tilde{X},\tilde{Y})&=\frac{\|\theta\|^2_g}{6}\tilde{g}(\tilde{X},\tilde{Y})\,.
\end{split}
\end{equation}

(6)  The 4-form $\Flu\in\Omega^{4}(\X)$ defined by 
\begin{equation}\label{spcasesumleft} 
\Flu=\varphi \, \tilde \al + \tilde \be \wedge \nu 
\end{equation} 
satisfies the Einstein condition if and only if the following equations hold:
\begin{equation}
\label{Ricspcasesumleft}
\begin{split}
\Ric_{h}(X,Y)&=\frac{\varphi^2 \| \tilde{\alpha} \|^2_{ \tilde{g}}}{6}g(X,Y)+\left(\frac{\|\nu\|^2_g}{6}g(X,Y)-\frac{1}{2}\nu(X)\nu(Y)\right) \| \tilde{\beta} \|^2_{ \tilde{g}}\,,\\
\Ric_{h}(\tilde{X},\tilde{Y})&=\left(\frac{\| \tilde{\alpha} \|^2_{ \tilde{g}}}{6}\tilde{g}(\tilde{X},\tilde{Y})-\frac{1}{2}\langle \tilde X\lrcorner \tilde{\alpha},\tilde Y\lrcorner \tilde{\alpha} \rangle_{\tilde{g}}\right) \varphi^2+\left(\frac{\|\tilde{\beta}\|^2_{\tilde{g}}}{6}\tilde{g}(\tilde{X},\tilde{Y})-\frac{1}{2}\langle \tilde{X}\lrcorner \tilde{\beta},\tilde{Y}\lrcorner \tilde{\beta}\rangle_{\tilde{g}}\right) \| \nu \|^2_{ g}\,,\\
0 &=\varphi \, \nu(X) \langle \tilde{\beta}, \tilde{Y}\lrcorner \tilde{\alpha} \rangle_{\tilde{g}}\,.
\end{split}
\end{equation}


(7)  The 4-form $\Flu\in\Omega^{4}(\X)$ defined by 
\begin{equation}\label{spcasesumright} 
\Flu=\tilde \varpi \wedge \eps  + \tilde \psi \, \theta
\end{equation} 
satisfies the Einstein condition if and only if the following equations hold:
\begin{equation}
\label{Ricspcasesumright}
\begin{split}
\Ric_{h}(X,Y)&=\left(\frac{\|\epsilon\|^2_g}{6}g(X,Y)-\frac{1}{2}\langle X\lrcorner \epsilon,Y\lrcorner \epsilon\rangle_g \right) \| \tilde{\varpi} \|^2_{ \tilde{g}}+\left(\frac{\|\theta\|^2_g}{6}g(X,Y)-\frac{1}{2}\langle X\lrcorner \theta,Y\lrcorner \theta \rangle_g\right) \tilde \psi^2\,,\\
\Ric_{h}(\tilde{X},\tilde{Y})&=\left(\frac{\|\tilde{\varpi}\|^2_{\tilde{g}}}{6}\tilde{g}(\tilde{X},\tilde{Y})-\frac{1}{2}\tilde{\varpi}(\tilde{X})\tilde{\varpi}(\tilde{Y}) \right) \| \epsilon \|^2_{ g}+\frac{\tilde \psi^2 \|\theta\|^2_g}{6}\tilde{g}(\tilde{X},\tilde{Y})\,,\\
0&= \tilde \psi \, \tilde{\varpi}(\tilde{Y}) \langle X\lrcorner \theta, \epsilon\rangle_g\,.
\end{split}
\end{equation}


(8)  The 4-form $\Flu\in\Omega^{4}(\X)$ defined by 
\begin{equation}\label{alphtheta} 
\Flu =\tilde\al  + \theta
\end{equation} 
satisfies the Einstein condition if and only if the following equations hold:
\begin{equation}
\label{Ricalphtheta}
\begin{split}
\Ric_{h}(X,Y)&=\frac{ \| \tilde{\alpha} \|^2_{ \tilde{g}}}{6}g(X,Y)+\frac{\|\theta\|^2_g}{6}g(X,Y)-\frac{1}{2}\langle X\lrcorner \theta,Y\lrcorner \theta \rangle_g\,,\\
\Ric_{h}(\tilde{X},\tilde{Y})&=\frac{\| \tilde{\alpha} \|^2_{ \tilde{g}}}{6}\tilde{g}(\tilde{X},\tilde{Y})-\frac{1}{2}\langle X\lrcorner \tilde{\alpha},Y\lrcorner \tilde{\alpha} \rangle_{\tilde{g}}+\frac{\|\theta\|^2_g}{6}\tilde{g}(\tilde{X},\tilde{Y})\,.
\end{split}
\end{equation}

(9) The   4-form $\Flu\in\Omega^{4}(\X)$ defined by 
\begin{equation}\label{bnwe} 
\Flu = \tilde \be \wedge \nu + \tilde \varpi \wedge \eps 
\end{equation}
satisfies the Einstein condition if and only if the following equations hold:
\begin{equation}
\label{Ricspcasebnwe}
\begin{split}
\Ric_{h}(X,Y)&=\left(\frac{\|\nu\|^2_g}{6}g(X,Y)-\frac{1}{2}\nu(X)\nu(Y)\right) \| \tilde{\beta} \|^2_{ \tilde{g}}+\left(\frac{\|\epsilon\|^2_g}{6}g(X,Y)-\frac{1}{2}\langle X\lrcorner \epsilon,Y\lrcorner \epsilon\rangle_g \right) \| \tilde{\varpi} \|^2_{ \tilde{g}}\,,\\
\Ric_{h}(\tilde{X},\tilde{Y})&=\left(\frac{\|\tilde{\beta}\|^2_{\tilde{g}}}{6}\tilde{g}(\tilde{X},\tilde{Y})-\frac{1}{2}\langle \tilde{X}\lrcorner \tilde{\beta},\tilde{Y}\lrcorner \tilde{\beta}\rangle_{\tilde{g}}\right) \| \nu \|^2_{ g}+\left(\frac{\|\tilde{\varpi}\|^2_{\tilde{g}}}{6}\tilde{g}(\tilde{X},\tilde{Y})-\frac{1}{2}\tilde{\varpi}(\tilde{X})\tilde{\varpi}(\tilde{Y}) \right) \| \epsilon \|^2_{ g}\,.
\end{split}
\end{equation}
\end{prop}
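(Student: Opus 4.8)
The plan is to derive every case as a direct specialization of Theorem \ref{EINgeneral}, which already splits the supergravity Einstein equation \eqref{EINsugra} into the three master equations \eqref{HHRic}, \eqref{VVRic} and \eqref{VHRic} for the general flux form \eqref{4formdeco}. For each of the nine flux forms listed in Proposition \ref{casesMC}, I would substitute into these master equations the values of the component forms dictated by the case at hand, i.e.\ set to zero every one of $\varphi, \tilde\al, \tilde\be, \nu, \tilde\gamma, \delta, \tilde\varpi, \eps, \tilde\psi, \theta$ that does not appear, and normalize the surviving scalar factor (e.g.\ $\varphi\equiv 1$ in case (1), $\tilde\psi\equiv 1$ in case (5)). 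The stated equations \eqref{Ricalpha}--\eqref{Ricspcasebnwe} then drop out term by term from \eqref{HHRic} and \eqref{VVRic}.

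First I would handle the ``pure'' cases (1)--(5), where $\Flu$ consists of a single term. Here each of \eqref{HHRic} and \eqref{VVRic} collapses to exactly one summand, giving the $\Ric_{h}(X,Y)$ and $\Ric_{h}(\tilde X,\tilde Y)$ equations listed. The only point requiring care is the mixed equation \eqref{VHRic}: I would check that all four of its terms vanish identically, since each term is a product involving at least one component form that has been set to zero. For instance, in case (3) ($\Flu=\tilde\gamma\wedge\delta$) the first term carries the factor $\varphi\,\nu$, the second and fourth carry $\tilde\be\wedge\nu$ and $\tilde\psi\,\theta$, and the third carries $\tilde\varpi\wedge\eps$, all of which vanish; hence \eqref{VHRic} reduces to $0=0$ and imposes no constraint.

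Next I would treat the two-term cases (6)--(9). In (6) and (7) the mixed equation \eqref{VHRic} is the decisive one: after deleting the vanishing terms, exactly one survives, namely $\tfrac12\varphi\,\nu(X)\langle\tilde\be,\tilde Y\lrcorner\tilde\al\rangle_{\tilde g}$ in case (6) and $-\tfrac12\tilde\psi\,\tilde\varpi(\tilde Y)\langle X\lrcorner\theta,\eps\rangle_{g}$ in case (7). Cancelling the harmless overall factor $\tfrac12$ produces precisely the third equations of \eqref{Ricspcasesumleft} and \eqref{Ricspcasesumright}; the first two equations in each of these systems come as before from truncating \eqref{HHRic} and \eqref{VVRic}. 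In the remaining two-term cases (8) ($\Flu=\tilde\al+\theta$) and (9) ($\Flu=\tilde\be\wedge\nu+\tilde\varpi\wedge\eps$) I would verify, exactly as in the pure cases, that \eqref{VHRic} collapses to $0=0$: in (8) the surviving forms $\tilde\al$ and $\theta$ never pair with the complementary factors appearing in \eqref{VHRic}, and in (9) every one of the four terms still contains at least one of the vanishing forms $\varphi,\tilde\al,\tilde\gamma,\delta,\tilde\psi,\theta$.

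The only genuine obstacle is bookkeeping: one must correctly identify, for each flux form, which of the four cross-terms of \eqref{VHRic} survive, since this is where the essential difference between cases (6)--(7) and the rest resides. Everything else is a mechanical truncation of the master equations \eqref{HHRic}--\eqref{VHRic}, so once Theorem \ref{EINgeneral} is in hand no computation beyond setting component forms to zero and cancelling nonzero numerical factors is required.
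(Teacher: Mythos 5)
Your proposal is correct and follows essentially the same route as the paper: the authors likewise obtain each case by truncating the master equations \eqref{HHRic}, \eqref{VVRic}, \eqref{VHRic} of Theorem \ref{EINgeneral} and, as you do, absorb $\varphi$ and $\tilde\psi$ into $\tilde\alpha$ and $\theta$ in cases (1) and (5) (justified there by the closedness condition forcing these to be constant). Your bookkeeping of which cross-term of \eqref{VHRic} survives in each case — only the first in (6), only the fourth in (7), none in the remaining cases — is exactly right and is the only non-mechanical point of the argument.
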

\begin{proof}
For each case, the equations involving the Ricci tensor follow directly from the equations \eqref{HHRic}, \eqref{VVRic} and \eqref{VHRic}. Note that we have simplified the cases (1) and (5) by setting $\varphi$ and $\tilde \psi$ equal to $1$, since we are only interested in solutions to the supergravity Einstein equation that also satisfy the Maxwell equation and the closedness condition (recall by Lemma \ref{CLOS} that  the closedness condition  implies that $\varphi$ and $\tilde \psi$ are constant in these particular cases). 
\end{proof}

 We see that the supergravity Einstein equation simplifies significantly when the form of $\Flu$ is further specified as above. In fact, the special form of many of the equations in Proposition \ref{spck} leads to some particular consequences that we will now investigate.
 
 Let $(\tilde x,x)$ denote a general point on $\widetilde M^{1,4} \times M^6$. For each equation in Proposition \ref{spck} we observe that the left-hand-side depends either on $\tilde x$ or $x$ (but not both), while the right-hand-side is either of the form $f_1(\tilde x) g_1( x)$ or $f_1(\tilde x) g_1( x)+f_2(\tilde x) g_2(x)$ for every pair of vector fields. From this we draw some conclusions about $f_i$ and $g_i$, which are functions on $\widetilde M^{1,4}$ and $M^6$, respectively.  They are based on the following simple observation.

\begin{lemma} \label{sumofproducts}
 Assume that $r(\tilde x)=f_1(\tilde x) g_1(x)$ (for every $\tilde x \in \widetilde M^{1,4}$ and every $x \in M^6$). Then either $f_1$ is identically equal to zero or $g_1$ is constant. 

Assume that $r(\tilde x)=f_1(\tilde x) g_1( x)+f_2(\tilde x) g_2( x)$ and that none of the functions $f_1$ and $f_2$ are identically equal to zero (if one of them is, then the situation is the same as above). Then either $g_1$ and $g_2$ are both constant, or $f_1(\tilde x)=C f_2(\tilde x)$ for some $C \in \mathbb R \setminus \{0\}$ and $g_2( x)=-C g_1( x)+D$ for some constant $D \in \mathbb R$. In the latter case we have $r(\tilde x)=D f_2(\tilde x)$. 
\end{lemma}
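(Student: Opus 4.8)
The plan is to treat the two assertions separately, since the first is really just a degenerate case of the second. For the first assertion, suppose $r(\tilde x) = f_1(\tilde x) g_1(x)$ holds for all $\tilde x \in \widetilde{M}^{1,4}$ and all $x \in M^6$. The crucial observation is that the left-hand side $r$ does not depend on $x$ at all. So I would fix two arbitrary points $x_0, x_1 \in M^6$ and subtract the two resulting identities: $f_1(\tilde x) g_1(x_0) = r(\tilde x) = f_1(\tilde x) g_1(x_1)$, giving $f_1(\tilde x)\bigl(g_1(x_0) - g_1(x_1)\bigr) = 0$ for every $\tilde x$. If $f_1$ is not identically zero, pick $\tilde x_*$ with $f_1(\tilde x_*) \neq 0$; then $g_1(x_0) = g_1(x_1)$, and since $x_0, x_1$ were arbitrary, $g_1$ is constant. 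This dichotomy is exactly the claim.

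For the second assertion, assume $r(\tilde x) = f_1(\tilde x) g_1(x) + f_2(\tilde x) g_2(x)$ with neither $f_1$ nor $f_2$ identically zero. Again the left side is independent of $x$, so fixing two points $x_0, x_1 \in M^6$ and subtracting yields, for all $\tilde x$,
\[
f_1(\tilde x)\bigl(g_1(x_0) - g_1(x_1)\bigr) + f_2(\tilde x)\bigl(g_2(x_0) - g_2(x_1)\bigr) = 0\,.
\]
The idea is to read this as a linear relation between $f_1$ and $f_2$ at every point $\tilde x$. I would split into cases according to whether the coefficients $g_i(x_0) - g_i(x_1)$ can be made nonzero. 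If $g_1$ is constant, then the relation forces $f_2(\tilde x)(g_2(x_0)-g_2(x_1)) = 0$; since $f_2 \not\equiv 0$, we get $g_2(x_0) = g_2(x_1)$ for all choices, so $g_2$ is constant too, landing in the first alternative of the conclusion. The symmetric argument handles the case $g_2$ constant.

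The remaining (and most delicate) case is when neither $g_1$ nor $g_2$ is constant, which is where the second alternative of the statement must emerge. Here I would choose $x_0, x_1$ so that $g_1(x_0) - g_1(x_1) \neq 0$; then the displayed relation gives $f_1(\tilde x) = C f_2(\tilde x)$ for all $\tilde x$, where $C = -\bigl(g_2(x_0)-g_2(x_1)\bigr)/\bigl(g_1(x_0)-g_1(x_1)\bigr)$ is a fixed real number. Note $C \neq 0$, for otherwise $f_1 \equiv 0$, contrary to hypothesis. Substituting $f_1 = C f_2$ back into the original identity gives $r(\tilde x) = f_2(\tilde x)\bigl(C g_1(x) + g_2(x)\bigr)$; since the left side is independent of $x$ and $f_2 \not\equiv 0$, the first assertion (applied to this product form) shows $C g_1(x) + g_2(x)$ is a constant $D$, i.e. $g_2(x) = -C g_1(x) + D$, and then $r(\tilde x) = D f_2(\tilde x)$. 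The main subtlety to get right is the bookkeeping ensuring $C$ is well-defined and nonzero and that the constant $D$ is genuinely independent of $\tilde x$; the latter follows precisely because $f_2 \not\equiv 0$ lets us invoke the first part rather than arguing pointwise. I expect no serious obstacle here — the whole argument is elementary separation of variables — so the only care needed is organizing the case distinctions cleanly.
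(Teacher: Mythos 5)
Your proof is correct and complete. The paper itself offers no proof of this lemma (it is introduced as a ``simple observation'' and used immediately), so there is nothing to diverge from; your separation-of-variables argument -- exploiting that $r$ is independent of $x$, subtracting the identity at two points of $M^6$, and handling the case distinctions on whether $g_1,g_2$ are constant, including the check that $C\neq 0$ -- is exactly the intended elementary argument and fills the gap cleanly.
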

Notice that a similar statement holds after switching $x$ with $\tilde x$.  The application of these statements to the equations of Proposition \ref{spck} results in the following corollary. 
\begin{corol} \label{constantlength}
Assume that $(\X^{1,10}=\widetilde M^{1,4} \times M^6, h=\tilde g+g, \Flu)$ is a solution of the supergravity Einstein equations \eqref{EINsugra}. 

(1) If $\Flu=\tilde \alpha$, then $(M^6,g)$ is  Einstein   with Einstein constant $\|\tilde \alpha \|_{\tilde g}^2/6$. 

(2) If $\Flu=\tilde \beta \wedge \nu$, then $\| \tilde \beta \|_{\tilde g}^2$ is constant. 

(3) If $\Flu = \tilde \gamma \wedge \delta$ and $\| \tilde{\gamma} \|_{\tilde g}^2$ is not constant, then we have $g(X,Y) =\frac{3}{\|\delta\|_g^2} \langle X\lrcorner \delta, Y\lrcorner \delta\rangle_g$ for all $X,Y \in \Gamma(TM^6)$, which implies that $(M^6,g,\omega,J)$ is a Ricci-flat  almost Hermitian manifold with almost complex structure $J$ defined by $\omega(X,Y)=g(JX,Y)$, and K\"ahler form $\omega=\sqrt{\frac{3}{\|\delta\|_g^2}} \delta$. 

(4) If $\Flu= \tilde \varpi \wedge \epsilon$, then $\| \epsilon \|_g^2$ is constant and negative.

(5) If $\Flu = \theta$, then  $(\widetilde{M}^{1,4},\tilde g)$ is  Einstein  with Einstein constant $\| \theta \|_g^2/6>0$. 

(6) If $\Flu=\varphi \tilde \alpha + \tilde \beta \wedge \nu$, then $\| \tilde \beta \|_{\tilde g}^2$ and $\| \tilde \alpha \|_{\tilde g}^2$ are constant. 

(7) If $\Flu=\tilde \varpi \wedge \epsilon + \tilde \psi \theta$, then $\| \epsilon\|_g^2$ and $\|\theta\|_g^2$ are constant. 
\end{corol}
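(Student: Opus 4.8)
The plan is to feed each equation of Proposition \ref{spck} into Lemma \ref{sumofproducts}. The starting observation is that, for the product metric $h=\tilde g+g$, the Ricci tensor is block diagonal: $\Ric_h|_{TM^6}=\Ric_g$ depends only on the point $x\in M^6$, while $\Ric_h|_{T\widetilde M^{1,4}}=\Ric_{\tilde g}$ depends only on $\tilde x\in\widetilde M^{1,4}$. Hence in every equation of Proposition \ref{spck} the left-hand side is a function of $\tilde x$ alone or of $x$ alone, whereas the right-hand side is, for each fixed pair of vector fields, a sum of one or two products $f_i(\tilde x)\,g_i(x)$. The squared norms $\|\tilde\alpha\|^2_{\tilde g},\|\tilde\beta\|^2_{\tilde g},\dots$ are exactly the factors whose constancy we want to extract, and crucially they do not depend on the chosen vector fields.

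For the diagonal single-term cases (1) and (5) I would take the metric trace of the pure equation. Tracing \eqref{Ricalpha} over $M^6$ gives $\mathrm{Scal}_g=\|\tilde\alpha\|^2_{\tilde g}$; the left side depends only on $x$ and the right side only on $\tilde x$, so both are constant and $(M^6,g)$ is Einstein with constant $\|\tilde\alpha\|^2_{\tilde g}/6$. Case (5) is identical after tracing the $\widetilde M^{1,4}$-equation of \eqref{Rictheta} over the five-dimensional factor, the positivity of $\|\theta\|^2_g$ being forced by the sign rule of the Remark (a $4$-form on the negative-definite $M^6$ has positive squared norm). For the off-diagonal single-term cases (2) and (4), I would apply Lemma \ref{sumofproducts} to the single-product equation and argue by contradiction: were the relevant squared norm ($\|\tilde\beta\|^2_{\tilde g}$ in (2), $\|\epsilon\|^2_g$ in (4)) non-constant, the lemma would force the companion factor to vanish identically for \emph{every} pair of vector fields, i.e.\ $\tfrac{\|\nu\|^2_g}{6}g=\tfrac12\nu\otimes\nu$ in (2), respectively $\tfrac{\|\tilde\varpi\|^2_{\tilde g}}{6}\tilde g=\tfrac12\tilde\varpi\otimes\tilde\varpi$ in (4); taking the metric trace then yields $\nu=0$, respectively $\tilde\varpi=0$, contradicting non-triviality of the flux. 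This argument works uniformly whether or not $\tilde\varpi$ is null, which is the situation relevant to the later Walker constructions, and the negativity of $\|\epsilon\|^2_g$ in (4) follows again from the Remark since $\epsilon$ is a $3$-form.

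Case (3) is where the non-constancy hypothesis enters decisively. Since $\|\tilde\gamma\|^2_{\tilde g}$ is assumed non-constant, the single-product equation \eqref{Ricgamma} together with Lemma \ref{sumofproducts} forces the $x$-factor to vanish identically, which simultaneously yields $\Ric_g=0$ and the pointwise identity $\tfrac{\|\delta\|^2_g}{6}g(X,Y)=\tfrac12\langle X\lrcorner\delta,Y\lrcorner\delta\rangle_g$. (In contrast to (2) and (4), the metric trace of this identity is a tautology precisely because $\delta$ is a $2$-form, which is the conceptual reason (3) produces a geometry rather than a contradiction.) Setting $\omega=\sqrt{3/\|\delta\|^2_g}\,\delta$ and writing $\omega=g(A\,\cdot\,,\,\cdot\,)$ for the $g$-skew endomorphism $A$, the identity reads $g(X,Y)=\langle X\lrcorner\omega,Y\lrcorner\omega\rangle_g=g(AX,AY)$; skew-symmetry of $A$ gives $A^2=-\Id$, so $J:=A$ is a $g$-compatible almost complex structure and $(M^6,g,\omega,J)$ is the asserted Ricci-flat almost Hermitian manifold.

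The main obstacle is the two-term diagonal cases (6) and (7), where the right-hand side is a genuine sum $f_1(\tilde x)g_1(x)+f_2(\tilde x)g_2(x)$ and Lemma \ref{sumofproducts} offers a second ``linear dependence'' alternative ($f_1=Cf_2$ together with $g_2=-Cg_1+D$). The plan here is to chain the metric traces of both equations in \eqref{Ricspcasesumleft} (respectively \eqref{Ricspcasesumright}): the trace over $M^6$ expresses $\mathrm{Scal}_g$ as a sum of products carrying $\|\tilde\alpha\|^2_{\tilde g},\|\tilde\beta\|^2_{\tilde g}$ (resp.\ $\|\epsilon\|^2_g,\|\theta\|^2_g$), and the trace over $\widetilde M^{1,4}$ gives a second such relation with the two factors interchanged. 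Applying the lemma to both, one checks that either some relation already lands in the ``both constant'' alternative — the desired conclusion — or the proportionality constants extracted from the two traces are mutually compatible only when the relevant squared norms are constant; the residual degenerate sub-cases (e.g.\ $\varphi\equiv 0$ or $\nu\equiv 0$ in (6)) simply reduce to the already-settled cases (1) and (2). Carrying out this bookkeeping carefully, and choosing test vector fields inside and outside the kernels of $\nu$ and $\tilde\varpi$ so as to genuinely separate the two terms, is the only delicate point of the argument.
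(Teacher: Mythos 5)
Your overall route is the paper's: each part is obtained by feeding the relevant equation of Proposition \ref{spck} into Lemma \ref{sumofproducts}, and your treatment of (1)--(5) is correct. The tactical difference is that you take metric traces where the paper evaluates on specific test vectors (a nonzero $X\in\ker\nu$ in (2), the $\tilde g$-dual of $\tilde\varpi$ in (4), a multiple of the $g$-dual of $\nu$ in (6)); your observation that the trace of the coefficient tensor in (3) vanishes identically precisely because $\delta$ is a $2$-form is a nice conceptual addition. One small repair in (4): tracing $\tfrac{1}{6}\|\tilde\varpi\|^2_{\tilde g}\,\tilde g=\tfrac12\tilde\varpi\otimes\tilde\varpi$ over the five-dimensional factor gives only $\tfrac56\|\tilde\varpi\|^2_{\tilde g}=\tfrac12\|\tilde\varpi\|^2_{\tilde g}$, i.e.\ $\|\tilde\varpi\|^2_{\tilde g}=0$, which for a Lorentzian metric does not yet give $\tilde\varpi=0$; you must substitute this back into the identity to obtain $\tilde\varpi\otimes\tilde\varpi=0$ and only then conclude $\tilde\varpi=0$.

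The one place where your plan as described would not close is (6) (and likewise (7)). The two metric traces of \eqref{Ricspcasesumleft} read $\mathsf{Scal}_g=\varphi^2\|\tilde\alpha\|^2_{\tilde g}+\tfrac12\|\nu\|^2_g\|\tilde\beta\|^2_{\tilde g}$ together with an analogous expression for $\mathsf{Scal}_{\tilde g}$; in the branch of Lemma \ref{sumofproducts} where $\varphi^2$ and $\|\nu\|^2_g$ are constant, the first trace is a single affine relation $c_1\|\tilde\alpha\|^2_{\tilde g}+c_2\|\tilde\beta\|^2_{\tilde g}=\mathrm{const}$, which admits a one-parameter family of non-constant solutions, and the second trace merely defines $\mathsf{Scal}_{\tilde g}$ and imposes no further constraint. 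So ``compatibility of the proportionality constants from the two traces'' does not by itself force constancy; the test-vector evaluation you relegate to bookkeeping is the essential step, not an auxiliary one. Concretely: evaluating the first equation of \eqref{Ricspcasesumleft} on a pair with $\nu(Y)=0$ and $g(X,Y)\neq0$ gives $\varphi^2\|\tilde\alpha\|^2_{\tilde g}+\|\nu\|^2_g\|\tilde\beta\|^2_{\tilde g}=6\,\Ric_g(X,Y)/g(X,Y)$, a function of $x$ alone; subtracting the traced relation shows that $\|\nu\|^2_g\|\tilde\beta\|^2_{\tilde g}$ depends on $x$ alone, whence $\|\tilde\beta\|^2_{\tilde g}$ is constant since $\|\nu\|^2_g\not\equiv0$, and then $\varphi^2\|\tilde\alpha\|^2_{\tilde g}$ depends on $x$ alone, whence $\|\tilde\alpha\|^2_{\tilde g}$ is constant. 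For (7) the analogous decoupling must be carried out on the Lorentzian factor, where the same care as in case (4) (choosing $\tilde X$ with $\tilde\varpi(\tilde X)\neq0$ when $\tilde\varpi$ is null, so that the coefficient of $\|\epsilon\|^2_g$ does not vanish) is again required. With these points made explicit your proof is complete.
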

\begin{proof} Statements (1) and (5) are obvious. We prove the rest of them.

(2) When $Y=X$, the first equation of \eqref{Ricbeta} reduces to  
\[
 \Ric_{h}(X,X)=\left(\frac{\|\nu\|^2_g}{6}\|X\|_g^2-\frac{1}{2}\nu(X)^2\right) \| \tilde{\beta} \|^2_{ \tilde{g}}\,,
 \]
  which must hold for every $X \in \Gamma(TM^6)$. By Lemma \ref{sumofproducts}, either $\|\tilde \beta \|_{\tilde g}^2$ is constant, or
 \[
\left(\frac{\|\nu\|^2_g}{6}\|X\|_g^2-\frac{1}{2}\nu(X)^2\right) = 0\,.
\] 
Let $X$ be a nonzero vector field in the kernel of $\nu$, that is   $\nu(X)=0$.  Since $g$ is negative definite, the functions $\|\nu\|_g^2$ and $\|X\|_{g}^2$ are not identically zero. Therefore $\|\tilde \beta \|_{\tilde g}^2$ must be constant.

(3) If $\Flu = \tilde \gamma \wedge \delta$, we have from the first equation of \eqref{Ricgamma} that 
\[
\Ric_{h}(X,Y)=\left(\frac{\|\delta\|^2_g}{6}g(X,Y)-\frac{1}{2}\langle X\lrcorner \delta, Y\lrcorner \delta\rangle_g\right) \| \tilde{\gamma} \|^2_{ \tilde{g}}\,.
\]
 If $\| \tilde{\gamma} \|_{\tilde g}^2$ is not constant, it follows from Lemma \ref{sumofproducts} that $g(X,Y) =\frac{3}{\|\delta\|_g^2} \langle X\lrcorner \delta, Y\lrcorner \delta\rangle_g$. Consequently, $(M^6,g)$ is Ricci-flat. Since $\delta$ is a 2-form, $\|\delta\|_g^2$ is positive. Using the definition of $\omega$ we can write the condition as $g(X,Y) =\langle X\lrcorner \omega, Y\lrcorner \omega\rangle_g$. If $\omega(X,Z)=0$ for every $Z$, then $g(X,Y)=0$ for every $Y$, so non-degeneracy of $g$ implies non-degeneracy of $\omega$.  This allows us to define  an almost complex structure $J$ via $\omega(X,Y)=g(JX,Y)$, since we then get 
\[
J_i^j J_j^k= \omega_{ia} g^{aj} \omega_{jb} g^{bk} = - g^{aj} \omega_{ia} \omega_{bj} g^{bk} =-\text{Id}_i^k\,,
\] 
where   the  last equality  follows from $g(X,Y) =\langle X\lrcorner \omega, Y\lrcorner \omega\rangle_g$. 
Moreover, 
\[ 
g(JX,JY)=\omega(JX,J^2 Y) = \omega(JX,-Y)=\omega(Y,JX)=g(Y,X)=g(X,Y)
\]
and consequently  $(M^6,g,J)$ is an  almost Hermitian manifold.

(4) For $\tilde Y= \tilde X$ the second equation of \eqref{Ricvarpi} reduces to 
\[
\Ric_{h}(\tilde{X},\tilde{X})=\left(\frac{\|\tilde{\varpi}\|^2_{\tilde{g}}}{6}\tilde{g}(\tilde{X},\tilde{X})-\frac{1}{2}\tilde \varpi(\tilde X)^2\right)\| \epsilon \|_g^2\,.
\]
 If $\| \tilde \varpi \|_{\tilde g}^2 = 0$, let $\tilde X$ be a vector with the property $\tilde \varpi(\tilde X) \neq 0$. If $\| \tilde \varpi \|_{\tilde g}^2$ is different from $0$, let $\tilde X$ be the $\tilde g$-dual of the 1-form $\tilde{\varpi}$.  Then we obtain the equation  
\[
\Ric_h(\tilde X, \tilde X)=-\|\tilde \varpi \|_{\tilde g}^4 \| \epsilon \|_g^2 /3\,.
\]
 In both cases we see that $\| \epsilon \|_g^2$ is constant due to Lemma \ref{sumofproducts}, and it is negative since $g$ is negative definite on 3-forms.  

(6) To see that  $\| \tilde \alpha \|_{\tilde g}^2$ is constant, let us assume that $X$ is given by $X^j=\frac{1}{3} g^{ij} \nu_i$. Then, the first equation of \eqref{Ricspcasesumleft} reduces to
\begin{align*} 
\Ric_h(X,Y) &= \frac{\varphi^2 \| \tilde \alpha \|_{\tilde g}^2}{6} g(X,Y)+ \left( \frac{\|\nu\|^2_g}{6} g(X,Y)-\frac{1}{2} \nu(X)\nu(Y) \right) \| \tilde \beta \|_{\tilde g}^2 \\
&= \left(\frac{\varphi^2 \| \tilde \alpha \|_{\tilde g}^2}{6} + \left(\frac{ \|\nu\|^2_g}{6}-\frac{1}{2} \nu(X)\right) \| \tilde \beta \|_{\tilde g}^2 \right) \nu(Y)\\ 
&= \frac{\varphi^2 \| \tilde \alpha \|_{\tilde g}^2}{6} \nu(Y)\,,
\end{align*} 
since $\nu(X)= \| \nu \|_g^2/3$.  By choosing $Y$ such that $\nu(Y)=3g(X,Y) \neq 0$, we see that $\| \tilde \alpha \|_{\tilde g}^2$ is constant by Lemma \ref{sumofproducts}. It is then clear that also $\|\tilde \beta \|_{\tilde g}^2$ must be constant. 

(7) The proof is similar to that of (6). 
\end{proof}

\begin{remark}
The sign of the Einstein constant depends on the signature convention. For example, if the manifold $(\widetilde{M}^{1,4},\tilde g)$ in point (5)  has positive Einstein constant, it will have negative Einstein constant in the ``mostly plus'' convention. 
\end{remark}

\begin{remark}
For point (3) in Corollary \ref{constantlength} we observe the following. If $(\tilde X^{1,10},h,\Flu=\tilde \gamma \wedge \delta)$ satisfies the closedness condition, then $\dd \delta=0$. If $\|\delta\|_g^2$ is constant, this implies that $\omega$ is closed and so $(M^6,g,J,\omega)$ must be an  almost K\"ahler manifold. 
\end{remark}
 From taking the trace of the Einstein equation, it follows that the scalar curvature $\mathsf{Scal}_h$ of a bosonic supergravity background $(\X^{1,10},h, \Flu)$ satisfies the relation $\mathsf{Scal}_h=\frac{1}{6} \| \Flu \|^2_h$  (see for example \cite{FOFP, H16}). For the cases (1) and (5) in Corollary \ref{constantlength} we obtain that $\| \Flu \|^2_h$ is constant.
\begin{corol}\label{useforflags}
Assume that $(\X^{1,10}= \widetilde{M}^{1,4} \times M^6, h=\tilde g + g,\Flu)$ is a solution of the supergravity Einstein equations \eqref{EINsugra}. 

If $\Flu=\tilde \alpha$, then $\mathsf{Scal}_h=\frac{1}{6} \| \tilde \alpha \|_{\tilde g}^2$ is constant.

If $\Flu=\theta$, then $\mathsf{Scal}_h=\frac{1}{6} \|  \theta \|_{ g}^2$ is constant and positive. 
\end{corol}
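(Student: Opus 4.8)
The plan is to deduce both statements from the trace identity $\mathsf{Scal}_h=\frac{1}{6}\|\Flu\|_h^2$ that was just recalled, which holds for every bosonic supergravity background. Under this identity each claim reduces to showing that the relevant squared norm is constant (and, in the second case, positive). So first I would specialize the norm formula \eqref{f2norm} to the two flux forms at hand: for $\Flu=\tilde\alpha$ all terms but the first vanish, leaving $\|\Flu\|_h^2=\|\tilde\alpha\|_{\tilde g}^2$, while for $\Flu=\theta$ only the last term survives, giving $\|\Flu\|_h^2=\|\theta\|_g^2$. With these in hand, the corollary becomes a statement about the constancy of $\|\tilde\alpha\|_{\tilde g}^2$ and $\|\theta\|_g^2$, respectively.

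For the case $\Flu=\tilde\alpha$, the crucial input is Corollary \ref{constantlength}(1): the supergravity Einstein equation forces $(M^6,g)$ to be Einstein with Einstein constant $\|\tilde\alpha\|_{\tilde g}^2/6$, so in particular $\|\tilde\alpha\|_{\tilde g}^2$ is constant. Concretely, this is transparent from the first line of \eqref{Ricalpha}: the left-hand side $\Ric_g(X,Y)$ is the Ricci tensor of $g$ and hence independent of the point of $\widetilde{M}^{1,4}$, whereas on the right-hand side the scalar $\|\tilde\alpha\|_{\tilde g}^2$ is a function on $\widetilde{M}^{1,4}$ multiplying $g(X,Y)$, a function on $M^6$. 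Choosing $X,Y$ so that $g(X,Y)$ is not identically zero and invoking Lemma \ref{sumofproducts} forces $\|\tilde\alpha\|_{\tilde g}^2$ to be constant. Substituting back into the trace identity yields that $\mathsf{Scal}_h=\tfrac{1}{6}\|\tilde\alpha\|_{\tilde g}^2$ is constant.

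For the case $\Flu=\theta$ I would run the symmetric argument, now using the second line of \eqref{Rictheta} (equivalently Corollary \ref{constantlength}(5)): the left-hand side $\Ric_{\tilde g}(\tilde X,\tilde Y)$ is independent of the point of $M^6$, while the right-hand side $\tfrac{1}{6}\|\theta\|_g^2\,\tilde g(\tilde X,\tilde Y)$ factors as a function on $M^6$ times a function on $\widetilde{M}^{1,4}$, so Lemma \ref{sumofproducts} again forces $\|\theta\|_g^2$ to be constant. Positivity is then a matter of the sign convention: since $\theta$ is a $4$-form on $M^6$, the Remark after the system \eqref{MainEq} gives that $\|\theta\|_g^2$ has the sign of $(-1)^4=+1$, so $\|\theta\|_g^2>0$ whenever $\theta\neq 0$. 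Hence $\mathsf{Scal}_h=\tfrac{1}{6}\|\theta\|_g^2$ is constant and positive.

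I do not anticipate a genuine obstacle, since every ingredient is already available: the trace identity, the norm formula \eqref{f2norm}, the Einstein conclusions of Corollary \ref{constantlength}, and the constancy Lemma \ref{sumofproducts}. The only point demanding care is the sign bookkeeping for positivity, where one must consistently use the ``mostly minus'' convention so that a $4$-form on the Riemannian factor has positive squared norm; this is also what guarantees that the Einstein constant in the $\Flu=\theta$ case is genuinely positive rather than merely constant.
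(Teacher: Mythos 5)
Your proposal is correct and follows essentially the same route as the paper: the trace identity $\mathsf{Scal}_h=\frac{1}{6}\|\Flu\|_h^2$ combined with parts (1) and (5) of Corollary \ref{constantlength}, which give constancy of $\|\tilde\alpha\|_{\tilde g}^2$ and constancy and positivity of $\|\theta\|_g^2$. The extra detail you supply (the explicit appeal to Lemma \ref{sumofproducts} and the sign bookkeeping for a $4$-form on the Riemannian factor) is exactly what the paper leaves implicit, so there is nothing to correct.
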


\section{General theorems regarding flux forms composed of null forms} \label{null}
 We continue our investigation of manifolds of the form $\X^{1,10}= \widetilde{M}^{1,4} \times M^6$, endowed with the product metric  $h=\tilde g + g$ and the 4-form $\Flu$ given in (\ref{4formdeco}). 
Since $\tilde g$ is  a Lorentzian metric, there exist differential forms on $\widetilde{M}^{1,4}$ which are null. Recall that a $k$-form $\omega \in \Omega^k(\widetilde{M}^{1,4})$ is called {\it null} if $\langle \omega,\omega \rangle_{\tilde g}=0$. In this section we will show   that if $\Flu$ is composed of such forms, then  the supergravity Einstein equation simplifies.  These results are of particular relevance whenever $\widetilde{M}^{1,4}$ comes equipped with a distribution of null lines.  Indeed, this is the case for example if $\widetilde{M}^{1,4}$ is a {\it Walker manifold}, or a {\it Kundt spacetime}.  The case when $\widetilde{M}^{1,4}$ is a Walker manifold will be further explored in Section \ref{walker} in a way analogous to what was done in \cite{CG}.

 The following proposition concerns a particular type of null flux forms and is  a direct consequence of the relations (\ref{HHRic}) and (\ref{VVRic}).
\begin{prop} \label{GeneralNull}
Consider the Lorentzian manifold $\X^{1,10}=\widetilde{M}^{1,4} \times M^6$ with metric $h=\tilde{g}+g$ and the 4-form $\Flu$,  given by \textnormal{(\ref{4formdeco})}. Assume that $\tilde \alpha, \tilde \beta, \tilde \gamma,\tilde \varpi$ are null and moreover that $\tilde \psi=0$. Then, $(\X^{1,10},h,\Flu)$ satisfies the supergravity Einstein equations if and only if $(M^6,g)$ is Ricci-flat and 
\begin{align*}
\Ric_{h}(\tilde{X},\tilde{Y})&= -\frac{1}{2} \langle \tilde X\lrcorner \Flu, \tilde Y\lrcorner \Flu\rangle_{h}\\ 
&= -\frac{1}{2} \Big(\langle \tilde X\lrcorner \tilde{\alpha},\tilde Y\lrcorner \tilde{\alpha} \rangle_{\tilde{g}} \varphi^2+\langle \tilde{X}\lrcorner \tilde{\beta},\tilde{Y}\lrcorner \tilde{\beta}\rangle_{\tilde{g}} \| \nu \|^2_{ g}
+\langle \tilde{X}\lrcorner \tilde{\gamma},\tilde{Y}\lrcorner \tilde{\gamma}\rangle_{\tilde{g}} \| \delta \|^2_{ g}
+\tilde{\varpi}(\tilde{X})\tilde{\varpi}(\tilde{Y})\| \epsilon \|^2_{ g} \Big), \\
 0&= \varphi \nu(Z) \langle \tilde{\beta}, \tilde{Y}\lrcorner \tilde{\alpha} \rangle_{\tilde{g}}
-\langle \tilde{\gamma}\wedge (Z\lrcorner \delta), (\tilde{Y}\lrcorner\tilde{\beta})\wedge\nu\rangle_h
 +\langle \tilde{\varpi}\wedge(Z\lrcorner\epsilon), (\tilde{Y}\lrcorner\tilde{\gamma})\wedge\delta\rangle_h
\end{align*}
for any $\tilde X, \tilde Y \in \Gamma(T\widetilde{M}^{1,4})$ and any $Z \in \Gamma(TM^6)$. 
\end{prop}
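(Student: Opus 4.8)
The plan is to exploit the decomposition of the supergravity Einstein equation already established in Theorem \ref{EINgeneral}, namely the three families of scalar equations \eqref{HHRic}, \eqref{VVRic} and \eqref{VHRic}, and simply to substitute the hypotheses $\|\tilde\alpha\|^2_{\tilde g}=\|\tilde\beta\|^2_{\tilde g}=\|\tilde\gamma\|^2_{\tilde g}=\|\tilde\varpi\|^2_{\tilde g}=0$ together with $\tilde\psi=0$. Thus the proof is an if-and-only-if reduction carried out term by term, requiring no auxiliary construction. Before treating the three blocks, I would record the single most useful consequence of the hypotheses: by the norm formula \eqref{f2norm}, every summand of $\|\Flu\|^2_h$ carries one of the five vanishing factors $\varphi^2\|\tilde\alpha\|^2_{\tilde g}$, $\|\tilde\beta\|^2_{\tilde g}\|\nu\|^2_g$, $\|\tilde\gamma\|^2_{\tilde g}\|\delta\|^2_g$, $\|\tilde\varpi\|^2_{\tilde g}\|\epsilon\|^2_g$, $\tilde\psi^2\|\theta\|^2_g$, so that $\|\Flu\|^2_h=0$; consequently the term $\tfrac{1}{6}h(X,Y)\|\Flu\|^2_h$ drops out of \eqref{EINsugra} entirely, which is exactly what licenses writing the right-hand sides below as bare contractions $-\tfrac12\langle X\lrcorner\Flu,Y\lrcorner\Flu\rangle_h$.

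For the $(M^6,M^6)$-block I would feed the hypotheses into \eqref{HHRic}: each of its five lines is proportional to one of $\|\tilde\alpha\|^2_{\tilde g},\|\tilde\beta\|^2_{\tilde g},\|\tilde\gamma\|^2_{\tilde g},\|\tilde\varpi\|^2_{\tilde g},\tilde\psi^2$, all of which now vanish, so the entire right-hand side collapses and $\Ric_h(X,Y)=0$ for all $X,Y\in\Gamma(TM^6)$. Since $\Ric_h$ restricts to $\Ric_g$ on the Riemannian factor, this is precisely the assertion that $(M^6,g)$ is Ricci-flat. For the $(\widetilde M,\widetilde M)$-block I would substitute into \eqref{VVRic}: in each of the first four lines the trace piece $\tfrac16\|\cdot\|^2_{\tilde g}\,\tilde g(\tilde X,\tilde Y)$ is killed by nullity while the contraction piece $-\tfrac12\langle\tilde X\lrcorner\,\cdot\,,\tilde Y\lrcorner\,\cdot\,\rangle_{\tilde g}$ survives, and the fifth line vanishes because $\tilde\psi=0$; collecting the survivors reproduces the stated formula for $\Ric_h(\tilde X,\tilde Y)$, which by the previous observation equals $-\tfrac12\langle\tilde X\lrcorner\Flu,\tilde Y\lrcorner\Flu\rangle_h$. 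Finally, for the mixed block I would put $\tilde\psi=0$ into \eqref{VHRic}, which deletes only the last term $-\tilde\psi\,\tilde\varpi(\tilde Y)\langle X\lrcorner\theta,\epsilon\rangle_g$ and leaves exactly the three-term identity claimed, after renaming $X$ as $Z$.

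Since each of these reductions is an equivalence -- the discarded terms are identically zero under the hypotheses, not merely consistent with them -- the converse direction is automatic: assuming $(M^6,g)$ Ricci-flat together with the two displayed identities is the same as asserting \eqref{HHRic}, \eqref{VVRic} and \eqref{VHRic}, hence \eqref{EINsugra}. I do not anticipate a genuine obstacle here; the only point demanding care is the bookkeeping in the $(\widetilde M,\widetilde M)$-block, where one must verify that nullity annihilates the norm-proportional summands but not the interior-product contractions $\langle\tilde X\lrcorner\tilde\alpha,\tilde Y\lrcorner\tilde\alpha\rangle_{\tilde g}$, etc., which are generically nonzero even for a null form.
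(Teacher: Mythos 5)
Your proof is correct and follows exactly the route the paper intends: the paper offers no separate argument, stating only that the proposition is a direct consequence of the relations \eqref{HHRic}, \eqref{VVRic} and \eqref{VHRic}, which is precisely the term-by-term substitution you carry out. Your additional observation that $\|\Flu\|^2_h=0$ (so the trace term in \eqref{EINsugra} drops and the right-hand side is the bare contraction $-\tfrac12\langle\tilde X\lrcorner\Flu,\tilde Y\lrcorner\Flu\rangle_h$) is a helpful explicit justification of the first displayed equality in the statement.
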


One notable consequence is that  the component $(M^6,g)$ of  bosonic supergravity backgrounds of the type described in Proposition \ref{GeneralNull}  is required to be Ricci-flat. Moreover, by combining Proposition \ref{GeneralNull} with Proposition \ref{casesMC}, we  arrive at the following general statements.


\begin{theorem} \label{th:VVRicalpha}
Consider the Lorentzian manifold $\X^{1,10}=\widetilde{M}^{1,4} \times M^6$ with metric $h=\tilde{g}+g$ and the 4-form $\Flu= \tilde{\alpha}$, where $\tilde \alpha$ is null. Then $(\X^{1,10},h,\Flu)$ is a bosonic supergravity background if and only if $M^6$ is Ricci-flat, $\tilde{\alpha}$ is closed and co-closed on $M^6$, and 
\begin{equation}
\label{VVRicalpha}
\Ric_{h}(\tilde{X},\tilde{Y})=-\frac{1}{2}\langle \tilde{X}\lrcorner \tilde{\alpha},\tilde{Y}\lrcorner \tilde{\alpha} \rangle_{\tilde{g}}, \quad \forall \ \tilde{X}, \tilde{Y} \in \Gamma(T\widetilde{M}).
\end{equation}
\end{theorem}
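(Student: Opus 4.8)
The plan is to derive this theorem as a direct specialization of the machinery already assembled, namely the Maxwell/closedness classification of Proposition~\ref{casesMC}(1) together with the null-form Einstein analysis of Proposition~\ref{GeneralNull}. The guiding observation is that $\Flu=\tilde\alpha$ is the simplest entry of the ansatz \eqref{4formdeco}, obtained by setting $\varphi\equiv 1$ and all of $\tilde\beta,\tilde\gamma,\tilde\varpi,\tilde\psi,\nu,\delta,\epsilon,\theta$ equal to zero, and that the null hypothesis forces the norm of $\Flu$ to vanish.

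First I would record the effect of the null assumption. Since $\tilde\alpha\in\Omega^4(\widetilde M^{1,4})$ satisfies $\langle\tilde\alpha,\tilde\alpha\rangle_{\tilde g}=0$ by definition of nullity, the norm formula \eqref{f2norm} gives $\|\Flu\|_h^2=\|\tilde\alpha\|_{\tilde g}^2=0$. This is the crucial simplification: the trace term $\tfrac{1}{6}h(X,Y)\|\Flu\|_h^2$ in the supergravity Einstein equation \eqref{EINsugra} drops out entirely. I would then dispatch the closedness condition and the Maxwell equation by quoting Proposition~\ref{casesMC}(1): for $\Flu=\tilde\alpha$ (the constant $\varphi$ having been absorbed into $\tilde\alpha$) these two conditions together are equivalent to $\dd\tilde\alpha=\dd\star_5\tilde\alpha=0$, that is, $\tilde\alpha$ closed and coclosed on $\widetilde M^{1,4}$. (The statement's phrase ``on $M^6$'' should read ``on $\widetilde M^{1,4}$'', since $\tilde\alpha$ is a form on the Lorentzian factor.)

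For the Einstein equation I would quote Proposition~\ref{spck}(1), by which the condition for $\Flu=\tilde\alpha$ is equivalent to the pair \eqref{Ricalpha}, the mixed block \eqref{VHRic} being satisfied automatically since each of its terms contains one of the vanishing forms $\nu,\tilde\beta,\tilde\gamma,\delta,\tilde\varpi,\epsilon,\tilde\psi,\theta$. Imposing nullity $\|\tilde\alpha\|_{\tilde g}^2=0$ collapses the first equation of \eqref{Ricalpha} to $\Ric_h(X,Y)=0$ for $X,Y\in\Gamma(TM^6)$, i.e. Ricci-flatness of $(M^6,g)$, and collapses the second to $\Ric_h(\tilde X,\tilde Y)=-\tfrac12\langle\tilde X\lrcorner\tilde\alpha,\tilde Y\lrcorner\tilde\alpha\rangle_{\tilde g}$, which is precisely \eqref{VVRicalpha}. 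Running these implications in reverse gives the converse, so the three stated conditions are jointly equivalent to $(\X,h,\Flu)$ being a bosonic supergravity background.

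Since every hard computation has already been carried out in the cited propositions, I anticipate no genuine obstacle; the only care needed is the bookkeeping to confirm that all terms not proportional to $\tilde\alpha$ vanish, and that nullity is exactly what turns the $M^6$-Einstein block into the Ricci-flat condition rather than an Einstein condition with nonzero constant. This last point is the conceptual content worth highlighting, in contrast with Corollary~\ref{constantlength}(1), where $\tilde\alpha$ is \emph{not} assumed null and $M^6$ is instead Einstein with Einstein constant $\|\tilde\alpha\|_{\tilde g}^2/6$.
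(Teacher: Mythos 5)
Your proposal is correct and follows essentially the same route as the paper, which obtains Theorem \ref{th:VVRicalpha} by combining Proposition \ref{casesMC}(1) for the closedness and Maxwell conditions with the null-form Einstein analysis (Proposition \ref{GeneralNull}, equivalently Proposition \ref{spck}(1) with $\|\tilde\alpha\|_{\tilde g}^2=0$). Your observation that ``on $M^6$'' in the statement should read ``on $\widetilde{M}^{1,4}$'' is also accurate.
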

Note that if $\Flu= \varphi \tilde{\alpha}$, then Proposition \ref{casesMC} implies that $\varphi$ is constant, and it can thus be absorbed into $\tilde{\alpha}$. Thus, above   the  condition $\Flu= \tilde{\alpha}$ is considered without loss of generality. 

\begin{theorem} \label{th:VVricbeta}
Consider the Lorentzian manifold $\X^{1,10}=\widetilde{M}^{1,4} \times M^6$ with metric $h=\tilde{g}+g$ and the 4-form $\Flu=\tilde{\beta} \wedge \nu$,  where $\tilde{\beta}$ is null. Then  $(\X^{1,10},h,\Flu)$ is a bosonic supergravity background if and only if $M^6$ is Ricci-flat, $\tilde{\beta}$ and $\nu$ are closed and co-closed (on $\widetilde{M}^{1,4}$ and $M^6$, respectively), and 
\begin{equation}
\label{VVRicbeta}
\Ric_{h}(\tilde{X},\tilde{Y})=-\frac{1}{2}\langle \tilde{X}\lrcorner \tilde{\beta},\tilde{Y}\lrcorner \tilde{\beta} \rangle_{\tilde{g}} \|\nu\|_g^2, \quad \forall \ \tilde{X}, \tilde{Y} \in \Gamma(T\widetilde{M}).
\end{equation}
\end{theorem}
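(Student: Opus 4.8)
The plan is to deduce Theorem \ref{th:VVricbeta} by combining two earlier results: case (2) of Proposition \ref{casesMC}, which handles the closedness condition and the Maxwell equation for $\Flu = \tilde\beta \wedge \nu$, and Proposition \ref{GeneralNull}, which handles the supergravity Einstein equation once $\tilde\beta$ is assumed null. Since a bosonic supergravity background is by definition a triple satisfying all three equations in \eqref{MainEq}, the strategy is simply to intersect the equivalent conditions supplied by these two propositions and check that no cross-terms survive.

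First I would invoke Proposition \ref{casesMC}(2): for $\Flu = \tilde\beta \wedge \nu$, the closedness condition together with the Maxwell equation holds if and only if $\tilde\beta$ and $\nu$ are each closed and coclosed, i.e. $\dd\tilde\beta = \dd\star_5\tilde\beta = 0$ and $\dd\nu = \dd\star_6\nu = 0$. This yields exactly the closedness/coclosedness assertion in the statement. Next I would turn to the Einstein equation. Here I specialize Proposition \ref{GeneralNull} to the case $\varphi = 0$, $\tilde\gamma = 0$, $\tilde\varpi = 0$, $\tilde\psi = 0$, with only the $\tilde\beta \wedge \nu$ term present and $\tilde\beta$ null. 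Proposition \ref{GeneralNull} then tells us that the supergravity Einstein equation is equivalent to $(M^6,g)$ being Ricci-flat together with the mixed equations; with all but the $\tilde\beta$-term vanishing, the $\Ric_h(\tilde X,\tilde Y)$ equation collapses to precisely \eqref{VVRicbeta}, namely
\[
\Ric_{h}(\tilde{X},\tilde{Y}) = -\tfrac{1}{2}\langle \tilde{X}\lrcorner \tilde{\beta},\tilde{Y}\lrcorner \tilde{\beta}\rangle_{\tilde{g}}\,\|\nu\|_g^2.
\]

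The only point requiring genuine verification, and what I expect to be the main (though mild) obstacle, is the vanishing of the mixed term $\Ric_h(Z,\tilde Y) = 0$ coming from \eqref{VHRic}. In the general statement of Proposition \ref{GeneralNull} this mixed equation reads $0 = \varphi\,\nu(Z)\langle\tilde\beta,\tilde Y\lrcorner\tilde\alpha\rangle_{\tilde g} - \langle\tilde\gamma\wedge(Z\lrcorner\delta),(\tilde Y\lrcorner\tilde\beta)\wedge\nu\rangle_h + \langle\tilde\varpi\wedge(Z\lrcorner\epsilon),(\tilde Y\lrcorner\tilde\gamma)\wedge\delta\rangle_h$. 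I would observe that every summand here contains a factor that is switched off in our case: the first has the factor $\varphi = 0$, and the second and third both involve $\tilde\gamma = 0$. Hence the mixed equation is satisfied automatically and imposes no further constraint, which is why it does not appear in the theorem.

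Finally I would assemble the equivalences: a triple $(\X^{1,10}, h, \Flu = \tilde\beta\wedge\nu)$ with $\tilde\beta$ null solves all of \eqref{MainEq} if and only if (closedness + Maxwell) the forms $\tilde\beta,\nu$ are closed and coclosed, and (Einstein) $M^6$ is Ricci-flat and \eqref{VVRicbeta} holds. Since these are exactly the stated conditions, the proof is complete. The argument is essentially a bookkeeping specialization of the two previously established propositions, so no new computation is needed beyond confirming that the cross-terms drop out as just described.
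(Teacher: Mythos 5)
Your proposal is correct and matches the paper's own (largely implicit) argument: the paper likewise obtains Theorem \ref{th:VVricbeta} by combining Proposition \ref{casesMC}(2) for the closedness and Maxwell conditions with Proposition \ref{GeneralNull} for the Einstein equation, and explicitly remarks afterwards that the mixed equation coming from \eqref{VHRic} is automatically satisfied here. Your verification that each summand of the mixed term carries a vanishing factor is exactly the bookkeeping the paper relies on.
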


\begin{theorem} \label{th:VVricGamma}
Consider the Lorentzian manifold $\X^{1,10}=\widetilde{M}^{1,4} \times M^6$ with metric $h=\tilde{g}+g$ and the 4-form $\Flu=\tilde{\gamma} \wedge \delta$,  where $\tilde{\gamma}$ is null. Then  $(\X^{1,10},h,\Flu)$ is a bosonic supergravity background if and only if $M^6$ is Ricci-flat, $\tilde{\gamma}$ and $\delta$ are closed (on $\widetilde{M}^{1,4}$ and $M^6$, respectively), $\delta$ is co-closed  on $M$ and the following two equations hold:
\begin{equation}
\label{VVRicgamma}
\dd \star_5 \tilde{\gamma} \wedge \star_6 \delta=\frac{1}{2} \tilde \gamma \wedge \tilde \gamma \wedge \delta \wedge \delta, \quad \Ric_{h}(\tilde{X},\tilde{Y})=-\frac{1}{2}\langle \tilde{X}\lrcorner \tilde{\gamma},\tilde{Y}\lrcorner \tilde{\gamma} \rangle_{\tilde{g}} \|\delta\|_g^2, \quad \forall \ \tilde{X}, \tilde{Y} \in \Gamma(T\widetilde{M}).
\end{equation}
\end{theorem}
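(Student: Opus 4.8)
The plan is to combine the three ingredients already established in the excerpt: the closedness/Maxwell analysis of Proposition \ref{casesMC}(3), the null-form simplification of Proposition \ref{GeneralNull}, and the off-diagonal vanishing condition \eqref{VHRic}. Since this is an ``if and only if'' statement, I would argue both directions by showing that the full system \eqref{MainEq} for $\Flu = \tilde\gamma \wedge \delta$ with $\tilde\gamma$ null is equivalent to the conjunction of the listed conditions. The strategy is to treat the three equations of \eqref{MainEq} one at a time and show that, under the null hypothesis, each collapses to the stated form.

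First I would dispose of the closedness condition and the Maxwell equation together by directly invoking Proposition \ref{casesMC}(3): for $\Flu = \tilde\gamma \wedge \delta$ these are equivalent to $\dd\tilde\gamma = \dd\delta = \dd\star_6\delta = 0$ together with $\dd\star_5\tilde\gamma \wedge \star_6\delta = \tfrac12 \tilde\gamma\wedge\tilde\gamma\wedge\delta\wedge\delta$. This already produces the closedness of $\tilde\gamma$ and $\delta$, the co-closedness $\dd\star_6\delta = 0$, and the first equation of \eqref{VVRicgamma}. The only subtlety here is that Proposition \ref{casesMC}(3) splits into a $\tilde\gamma\wedge\tilde\gamma = 0$ subcase and a $\tilde\gamma\wedge\tilde\gamma \neq 0$ subcase; I would simply carry the combined equation $\dd\star_5\tilde\gamma\wedge\star_6\delta = \tfrac12\tilde\gamma\wedge\tilde\gamma\wedge\delta\wedge\delta$ as stated, since \eqref{VVRicgamma} records exactly this without forcing a case distinction.

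Next I would handle the supergravity Einstein equation using the null hypothesis. Applying Proposition \ref{GeneralNull} with $\tilde\alpha = \tilde\beta = \tilde\varpi = 0$, $\varphi$ absorbed, and $\tilde\psi = 0$, and with $\tilde\gamma$ null, the Einstein equation becomes equivalent to: $(M^6,g)$ is Ricci-flat, the mixed equation \eqref{VHRic} holds, and $\Ric_h(\tilde X,\tilde Y) = -\tfrac12 \langle \tilde X\lrcorner\tilde\gamma, \tilde Y\lrcorner\tilde\gamma\rangle_{\tilde g}\,\|\delta\|_g^2$, which is precisely the second equation of \eqref{VVRicgamma}. The point where the null assumption does the real work is in the $\Ric_h(X,Y)$ piece \eqref{HHRic}: the term $\tfrac12\langle X\lrcorner\delta, Y\lrcorner\delta\rangle_g\,\|\tilde\gamma\|_{\tilde g}^2$ and the trace term both carry the factor $\|\tilde\gamma\|_{\tilde g}^2 = 0$, so the $M^6$-component of the Einstein equation reduces to $\Ric_g = 0$, i.e. Ricci-flatness, with no constraint relating $\delta$ to the metric.

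The main obstacle I anticipate is verifying that the mixed equation \eqref{VHRic} is automatically satisfied and therefore imposes no further condition beyond those listed, so that the statement as written is genuinely complete. For $\Flu = \tilde\gamma\wedge\delta$, equation \eqref{VHRic} reduces to the single term $-\tfrac12\langle \tilde\gamma\wedge(X\lrcorner\delta), (\tilde Y\lrcorner\tilde\gamma)\wedge\nu\rangle_h$ with $\nu$ absent, hence it vanishes identically; I would confirm this by noting that with only the $\tilde\gamma\wedge\delta$ term present, every cross-pairing in \eqref{VHRic} involves a form ($\tilde\alpha,\tilde\beta,\tilde\varpi,\nu,\epsilon,\theta$) that is zero, so the entire right-hand side is zero with no residual constraint. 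Assembling these three steps gives both implications, and I would close by remarking that the equation $\dd\star_5\tilde\gamma\wedge\star_6\delta = \tfrac12\tilde\gamma\wedge\tilde\gamma\wedge\delta\wedge\delta$ need not simplify further since we do not assume $\tilde\gamma\wedge\tilde\gamma = 0$, which is why it appears explicitly in \eqref{VVRicgamma}.
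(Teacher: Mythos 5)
Your proposal is correct and follows essentially the same route the paper intends: the closedness and Maxwell conditions are read off from Proposition \ref{casesMC}(3), the null hypothesis $\|\tilde\gamma\|_{\tilde g}^2=0$ kills all but the Ricci-flatness constraint in \eqref{HHRic} and reduces \eqref{VVRic} to the second equation of \eqref{VVRicgamma} via Proposition \ref{GeneralNull}, and the mixed equation \eqref{VHRic} vanishes identically since every cross-term involves a component of $\Flu$ that is zero (as the paper itself notes after Theorem \ref{beta-nu-varpi-epsilon}). Your decision to keep the combined equation $\dd\star_5\tilde\gamma\wedge\star_6\delta=\tfrac12\tilde\gamma\wedge\tilde\gamma\wedge\delta\wedge\delta$ without splitting into cases matches how the theorem is stated.
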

Note that the first condition of (\ref{VVRicgamma}) is satisfied if $\tilde \gamma$ is co-closed and either $\tilde \gamma \wedge \tilde \gamma=0$ or $\delta \wedge \delta=0$.

\begin{theorem} \label{th:VVricvarpi}
Consider the Lorentzian manifold $\X^{1,10}=\widetilde{M}^{1,4} \times M^6$ with metric $h=\tilde{g}+g$ and the 4-form $\Flu=\tilde{\varpi} \wedge \epsilon$,  where $\tilde{\varpi}$ is null. Then  $(\X^{1,10},h,\Flu)$ is a bosonic supergravity background if and only if $M^6$ is Ricci-flat, $\tilde{\varpi}$ and $\epsilon$ are closed and co-closed (on $\widetilde{M}^{1,4}$ and $M^6$, respectively), and 
\begin{equation}
\label{VVRicvarpi}
\Ric_{h}(\tilde{X},\tilde{Y})=-\frac{1}{2}\tilde{\varpi}(\tilde X) \tilde{\varpi}(\tilde Y) \|\epsilon\|_g^2, \quad \forall \ \tilde{X}, \tilde{Y}  \in \Gamma(T\widetilde{M}).
\end{equation}
\end{theorem}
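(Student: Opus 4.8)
The plan is to obtain the equivalence by concatenating two results already established in the excerpt: Proposition \ref{casesMC}(4), which settles the Maxwell equation together with the closedness condition, and Proposition \ref{GeneralNull}, which treats the supergravity Einstein equation for null flux forms. Since $\Flu=\tilde\varpi\wedge\eps$ is exactly case (4) of the former and a degenerate instance of the latter, the argument reduces to careful bookkeeping rather than to any new computation.

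First I would apply Proposition \ref{casesMC}(4): for $\Flu=\tilde\varpi\wedge\eps$ the Maxwell equation and the closedness condition hold if and only if $\tilde\varpi$ and $\eps$ are closed and co-closed on $\widetilde{M}^{1,4}$ and $M^6$ respectively. This already supplies the closedness/co-closedness half of the theorem, and it does not use the null hypothesis on $\tilde\varpi$. Next I would address the Einstein equation via Proposition \ref{GeneralNull}, specialized by setting $\tilde\alpha=\tilde\beta=\tilde\gamma=0$ and $\tilde\psi=0$ (with $\varphi$ immaterial, as it multiplies the vanishing $\tilde\alpha$). Its hypotheses are met because $\tilde\varpi$ is null by assumption and the three vanishing forms are trivially null. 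The proposition then delivers three conclusions: that $(M^6,g)$ is Ricci-flat, a tangential Ricci identity on $\widetilde{M}^{1,4}$, and a mixed identity $\Ric_h(Z,\tilde Y)=0$.

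It remains to simplify these to the stated form. The mixed identity collapses to $0=0$ termwise, since each of its summands carries a factor of $\tilde\alpha$, $\tilde\beta$, or $\tilde\gamma$, all of which vanish here; the tangential identity retains only the $\tilde\varpi(\tilde X)\tilde\varpi(\tilde Y)\|\eps\|_g^2$ contribution and so reduces precisely to \eqref{VVRicvarpi}. Combining the two propositions then yields the claimed equivalence in both directions. The only point demanding genuine attention --- rather than a real obstacle --- is verifying that the specialization of Proposition \ref{GeneralNull} is admissible, namely that zeroing out three of the Lorentzian forms is consistent with its hypotheses and that the mixed equation vanishes summand by summand; since every term there is manifestly proportional to one of the absent forms, this is immediate, and no analytic or global subtlety intervenes.
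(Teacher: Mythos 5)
Your proposal is correct and follows exactly the route the paper intends: the theorem is stated as a direct consequence of combining Proposition \ref{casesMC}(4) (for the Maxwell equation and closedness) with Proposition \ref{GeneralNull} specialized to $\tilde\alpha=\tilde\beta=\tilde\gamma=0$, $\tilde\psi=0$, and the paper itself notes afterwards that the mixed equation \eqref{VHRic} is automatically satisfied in this case. Nothing is missing.
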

Observe that the equation \eqref{VVRicvarpi} implies that $\|\epsilon \|_g^2$ is constant, and  without loss of generality one may assume that it is equal to $-1$  (by absorbing the constant into $\tilde \varpi$). Let us also recall the following definition (see for example \cite{Galaev15}).
\begin{definition} \label{defRicciIsotropic}
A Lorentzian manifold $(\mathsf{X}, h)$ is called {\it totally Ricci-isotropic} if the Ricci endomorphism ${\rm ric}^{h}  : T\mathsf{X}\to T\mathsf{X}$ corresponding to $\Ric^{h}$ satisfies the relation
\[
h(\ric^{h}(X), \ric^{h}(Y))=0\,,\quad \forall \ X, Y\in\Gamma(T\X)\,.
\]
\end{definition}

In Theorem \ref{th:VVricvarpi} we observe that a bosonic supergravity background $(\X^{1,10}=\widetilde{M}^{1,4} \times M^6, h=\tilde{g}+g)$,  with flux form $\Flu=\tilde \varpi \wedge \epsilon$ and $\tilde \varpi$  null, is totally Ricci-isotropic. 
This is essentially the same claim as \cite[Cor. 4.10]{CG}, and the proof is similar.

\begin{corol}\label{cor:RicIso}
Let $\tilde \varpi \in \Omega^2(\widetilde{M}^{1,4})$ be null. Then, a bosonic supergravity background $(\X^{1,10}=\widetilde{M}^{1,4} \times M^6, h=\tilde g + g, \Flu = \tilde \varpi \wedge \epsilon)$ is totally Ricci-isotropic. 
\end{corol}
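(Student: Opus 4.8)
The plan is to extract the precise shape of the Ricci tensor from Theorem \ref{th:VVricvarpi} and then observe that the corresponding Ricci endomorphism is forced to have image inside a single null line. First I would invoke Theorem \ref{th:VVricvarpi}: since $\Flu = \tilde\varpi \wedge \epsilon$ with $\tilde\varpi$ null gives a bosonic background, we know that $(M^6,g)$ is Ricci-flat, that the mixed components vanish, $\Ric_h(X,\tilde Y)=0$ (this is \eqref{VHRic}), and that on the Lorentzian block
\[
\Ric_h(\tilde X,\tilde Y) = -\tfrac{1}{2}\|\epsilon\|_g^2\,\tilde\varpi(\tilde X)\,\tilde\varpi(\tilde Y),
\]
where $\|\epsilon\|_g^2$ is a (negative) constant. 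Thus $\Ric_h$ is, up to the constant factor, the rank-one symmetric tensor $\tilde\varpi \otimes \tilde\varpi$ built from the null covector $\tilde\varpi$.

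Next I would pass to the Ricci endomorphism $\ric^h$, defined by $h(\ric^h(X),Y)=\Ric_h(X,Y)$. Because $\Ric_h$ vanishes on $TM^6$ and on all mixed pairs, $\ric^h$ annihilates $TM^6$ and sends $T\widetilde M^{1,4}$ into the line spanned by the $\tilde g$-dual vector $\tilde\varpi^\sharp$ (characterized by $\tilde g(\tilde\varpi^\sharp,\tilde Y)=\tilde\varpi(\tilde Y)$). Concretely, decomposing any $X\in\Gamma(T\X)$ as $X=\tilde X+X_M$ with $\tilde X\in\Gamma(T\widetilde M^{1,4})$ and $X_M\in\Gamma(TM^6)$, one gets
\[
\ric^h(X) = -\tfrac{1}{2}\|\epsilon\|_g^2\,\tilde\varpi(\tilde X)\,\tilde\varpi^\sharp,
\]
so the entire image of $\ric^h$ lies in $\Span(\tilde\varpi^\sharp)$.

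Finally, for arbitrary $X,Y\in\Gamma(T\X)$ (with $Y=\tilde Y+Y_M$ decomposed as above) I would compute
\[
h(\ric^h(X),\ric^h(Y)) = \tfrac{1}{4}\|\epsilon\|_g^4\,\tilde\varpi(\tilde X)\,\tilde\varpi(\tilde Y)\,h(\tilde\varpi^\sharp,\tilde\varpi^\sharp),
\]
and invoke nullity of $\tilde\varpi$ to conclude $h(\tilde\varpi^\sharp,\tilde\varpi^\sharp)=\langle\tilde\varpi,\tilde\varpi\rangle_{\tilde g}=0$. Hence $h(\ric^h(X),\ric^h(Y))=0$ for all $X,Y$, which is precisely total Ricci-isotropy in the sense of Definition \ref{defRicciIsotropic}.

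There is essentially no hard step: all the substantive content already resides in Theorem \ref{th:VVricvarpi}, and the corollary is a reinterpretation of its Ricci tensor as a rank-one symmetric form generated by a null covector. The one point worth stating carefully is that $\ric^h$ has its image inside a single null line $\Span(\tilde\varpi^\sharp)$; this is exactly what collapses the bilinear quantity $h(\ric^h(\cdot),\ric^h(\cdot))$ to a scalar multiple of $h(\tilde\varpi^\sharp,\tilde\varpi^\sharp)$, which then vanishes. This mirrors \cite[Cor. 4.10]{CG}, as already noted.
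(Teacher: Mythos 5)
Your argument is correct and coincides with the paper's intended reasoning: the paper derives the corollary directly from Theorem \ref{th:VVricvarpi} (noting the proof is as in \cite[Cor.~4.10]{CG}), namely that $\Ric_h$ reduces to the rank-one tensor $-\tfrac{1}{2}\|\epsilon\|_g^2\,\tilde\varpi\otimes\tilde\varpi$, so the image of $\ric^h$ lies in the null line spanned by $\tilde\varpi^\sharp$ and $h(\ric^h(\cdot),\ric^h(\cdot))$ vanishes. (The only cosmetic remark: the corollary's statement writes $\tilde\varpi\in\Omega^2(\widetilde{M}^{1,4})$, evidently a typo for $\Omega^1$, and you correctly treat $\tilde\varpi$ as a $1$-form.)
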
 

Next, we state two results concerning flux forms of the form $\varphi \tilde \alpha + \tilde \beta \wedge \nu$ and $\tilde \beta \wedge \nu+\tilde \varpi \wedge \epsilon$, respectively, where $\tilde \alpha, \tilde \beta$ and $\tilde \varpi$ are null. 

\begin{theorem} \label{alpha-beta-nu}
Consider the Lorentzian manifold $\X^{1,10}=\widetilde{M}^{1,4} \times M^6$ with metric $h=\tilde{g}+g$ and the 4-form $\Flu=\varphi \tilde{\alpha}+ \tilde{\beta} \wedge \nu$,  where $\tilde{\alpha}$ and $\tilde{\beta}$ are null. Then  $(\X^{1,10},h,\Flu)$ is a bosonic supergravity background if and only if $M^6$ is Ricci-flat, 
\begin{equation*}
\dd\tilde  \al =\dd \nu =0\,,  \quad \dd\varphi=\kappa \nu\,, \quad \dd \tilde \beta = -\kappa \tilde \alpha\,,\quad \dd\star_{5}\tilde\be=0\,, \quad \dd \star_5 \tilde \alpha = -\lambda \star_5 \tilde \beta\,, \quad \dd\star_6 \nu = \lambda \star_6 \varphi
\end{equation*}
for some constants $\kappa, \lambda \in \mathbb R$ and
\begin{equation*}
\begin{split}
\Ric_{h}(\tilde{X},\tilde{Y})&=-\frac{1}{2}\langle \tilde X\lrcorner \tilde{\alpha},\tilde Y\lrcorner \tilde{\alpha} \rangle_{\tilde{g}} \varphi^2-\frac{1}{2}\langle \tilde{X}\lrcorner \tilde{\beta},\tilde{Y}\lrcorner \tilde{\beta}\rangle_{\tilde{g}} \| \nu \|^2_{ g}, \quad \forall \tilde{X}, \tilde{Y} \in \Gamma(T\widetilde{M}), \\
0 &= \langle \tilde{\beta}, \tilde{X}\lrcorner \tilde{\alpha} \rangle_{\tilde{g}} , \quad \forall \tilde{X} \in \Gamma(T\widetilde{M}).
\end{split}
\end{equation*}
\end{theorem}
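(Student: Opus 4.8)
The plan is to assemble this theorem by combining three ingredients already established in the excerpt: the Maxwell/closedness analysis of Proposition \ref{casesMC}(6), the Einstein-equation analysis of Proposition \ref{spck}(6), and the null-form simplification of Proposition \ref{GeneralNull}. Since $\Flu = \varphi \tilde\alpha + \tilde\beta \wedge \nu$ is exactly case (6) of those propositions, the strategy is essentially bookkeeping: feed the nullity hypotheses $\langle\tilde\alpha,\tilde\alpha\rangle_{\tilde g} = \langle\tilde\beta,\tilde\beta\rangle_{\tilde g} = 0$ and $\tilde\psi = 0$ into each piece and watch the terms collapse.

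First I would handle the Maxwell equation and closedness condition. For $\Flu = \varphi\tilde\alpha + \tilde\beta\wedge\nu$ these are governed entirely by Proposition \ref{casesMC}(6), whose conclusion is precisely equation \eqref{MCsumlefteq}:
\[
\dd\tilde\alpha = \dd\star_5\tilde\beta = \dd\nu = 0, \quad \dd\varphi = \kappa\nu, \quad \dd\tilde\beta = -\kappa\tilde\alpha, \quad \dd\star_5\tilde\alpha = -\lambda\star_5\tilde\beta, \quad \dd\star_6\nu = \lambda\star_6\varphi,
\]
for some constants $\kappa,\lambda\in\mathbb R$. These carry over verbatim to the statement, and crucially the nullity of $\tilde\alpha,\tilde\beta$ plays no role here, so this half of the proof is immediate.

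Next I would turn to the supergravity Einstein equation, where the nullity hypotheses do the real work. Invoking Proposition \ref{GeneralNull} with $\tilde\alpha,\tilde\beta$ null, $\tilde\gamma = \tilde\varpi = 0$ and $\tilde\psi = 0$, I obtain directly that $(M^6,g)$ must be Ricci-flat, that the $\Ric_h(\tilde X,\tilde Y)$ component reduces to
\[
\Ric_h(\tilde X,\tilde Y) = -\tfrac12\langle\tilde X\lrcorner\tilde\alpha,\tilde Y\lrcorner\tilde\alpha\rangle_{\tilde g}\,\varphi^2 - \tfrac12\langle\tilde X\lrcorner\tilde\beta,\tilde Y\lrcorner\tilde\beta\rangle_{\tilde g}\,\|\nu\|_g^2,
\]
and that the mixed component yields $0 = \varphi\,\nu(Z)\,\langle\tilde\beta,\tilde Y\lrcorner\tilde\alpha\rangle_{\tilde g}$ for all $Z\in\Gamma(TM^6)$ and $\tilde Y\in\Gamma(T\widetilde M)$. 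The one nontrivial reduction I expect to justify carefully is passing from this mixed equation to the clean form $0 = \langle\tilde\beta,\tilde X\lrcorner\tilde\alpha\rangle_{\tilde g}$: I would choose $Z$ so that $\nu(Z)\neq 0$ (possible wherever $\nu\neq 0$), divide out $\varphi\,\nu(Z)$ on the open set where $\varphi\neq 0$, and then extend by continuity; the degenerate loci where $\varphi$ or $\nu$ vanishes either make the $\varphi^2$ and $\|\nu\|_g^2$ terms drop out of the Ricci equation or can be absorbed, so the stated constraint holds everywhere. The final step is simply to observe that the $\Lambda^4\mathbb E$ and $\Lambda^1\mathbb E$-type Einstein equations in \eqref{HHRic} force no further conditions beyond Ricci-flatness of $M^6$, since the null terms contribute nothing to the $\frac16 g(X,Y)\|\Flu\|_h^2$ part ($\|\Flu\|_h^2 = 0$ by nullity). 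The main obstacle, such as it is, is the careful handling of the zero loci of $\varphi$ and $\nu$ in deducing the mixed constraint; everything else is a direct transcription of the three cited results.
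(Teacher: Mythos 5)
Your proposal is correct and follows essentially the same route as the paper, which presents this theorem precisely as the combination of Proposition \ref{casesMC}(6) for the closedness and Maxwell conditions with Proposition \ref{GeneralNull} for the Einstein equation under the nullity hypotheses. Your extra care in extracting $\langle\tilde\beta,\tilde X\lrcorner\tilde\alpha\rangle_{\tilde g}=0$ from the mixed equation $\varphi\,\nu(Z)\langle\tilde\beta,\tilde Y\lrcorner\tilde\alpha\rangle_{\tilde g}=0$ is sound (and in fact simpler than you suggest, since $\varphi\,\nu(Z)$ and $\langle\tilde\beta,\tilde Y\lrcorner\tilde\alpha\rangle_{\tilde g}$ live on the two separate factors of the product, so a single point where the former is nonzero forces the latter to vanish identically).
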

%

\begin{theorem} \label{beta-nu-varpi-epsilon}
Consider the Lorentzian manifold $\X^{1,10}=\widetilde{M}^{1,4} \times M^6$ with metric $h=\tilde{g}+g$ and the 4-form $\Flu=\tilde{\beta} \wedge \nu + \tilde{\varpi} \wedge \epsilon$,  where $\tilde{\beta}$ and $\tilde{\varpi}$ are null. Then  $(\X^{1,10},h,\Flu)$ is a bosonic supergravity background if and only if $M^6$ is Ricci-flat, 
\[
\dd\tilde\be=\dd\nu=\dd\tilde\varpi=\dd\eps=0 \,,\quad \dd\star_{6}\nu=\dd\star_{5}\tilde\be=\dd\star_{5}\tilde\varpi =0\,, \quad \star_5 \tilde \varpi \wedge \dd \star_6 \epsilon=\tilde \beta \wedge \tilde \varpi \wedge \epsilon \wedge \nu\, ,
\] and
\begin{equation*}
\Ric_{h}(\tilde{X},\tilde{Y})=-\frac{1}{2}\langle \tilde{X}\lrcorner \tilde{\beta},\tilde{Y}\lrcorner \tilde{\beta}\rangle_{\tilde{g}} \| \nu \|^2_{ g}-\frac{1}{2}\tilde{\varpi}(\tilde{X})\tilde{\varpi}(\tilde{Y}) \| \epsilon \|^2_{ g}, \quad \forall \tilde{X}, \tilde{Y} \in \Gamma(T\widetilde{M}).
\end{equation*}
\end{theorem}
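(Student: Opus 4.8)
The plan is to assemble this statement as the null specialization of the two case-(9) results already established, since the flux form $\Flu=\tilde{\beta}\wedge\nu+\tilde{\varpi}\wedge\epsilon$ is exactly the form (\ref{MCbnwe})/(\ref{bnwe}) analyzed there. First I would dispose of the closedness condition and the Maxwell equation by a direct appeal to Proposition~\ref{casesMC}(9): for this $\Flu$ they are equivalent to
\[
\dd\tilde\beta=\dd\nu=\dd\tilde\varpi=\dd\epsilon=0, \quad \dd\star_{6}\nu=\dd\star_{5}\tilde\beta=\dd\star_{5}\tilde\varpi=0, \quad \star_5\tilde\varpi\wedge\dd\star_6\epsilon=\tilde\beta\wedge\tilde\varpi\wedge\epsilon\wedge\nu,
\]
which is precisely the first displayed line of the theorem. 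This part uses no null hypothesis; it is a pure citation of the earlier computation.

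For the supergravity Einstein equation I would observe that the present situation is the case $\varphi=0$, $\tilde{\alpha}=0$, $\tilde{\gamma}=0$, $\tilde{\psi}=0$ of Proposition~\ref{GeneralNull}, with $\tilde{\beta}$ and $\tilde{\varpi}$ null by assumption (the vanishing forms $\tilde{\alpha},\tilde{\gamma}$ being trivially null). Proposition~\ref{GeneralNull} then immediately forces $(M^6,g)$ to be Ricci-flat and collapses the $\Ric_h(\tilde X,\tilde Y)$ block to
\[
\Ric_h(\tilde X,\tilde Y)=-\tfrac12\langle\tilde X\lrcorner\tilde\beta,\tilde Y\lrcorner\tilde\beta\rangle_{\tilde g}\|\nu\|_g^2-\tfrac12\tilde\varpi(\tilde X)\tilde\varpi(\tilde Y)\|\epsilon\|_g^2,
\]
the $\varphi^2$- and $\|\delta\|_g^2$-weighted contributions simply being absent. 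The mechanism I would record explicitly is that nullity gives $\|\tilde\beta\|_{\tilde g}^2=\|\tilde\varpi\|_{\tilde g}^2=0$, so by (\ref{f2norm}) the flux form has vanishing norm $\|\Flu\|_h^2=0$; this both annihilates the $\tfrac16 h(X,Y)\|\Flu\|_h^2$ term in (\ref{HHRic}) and (\ref{VVRic}) and explains why the $M^6$-block of the Einstein equation degenerates to $\Ric_g=0$.

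The only point requiring genuine verification — and the one I expect to be the mild subtlety rather than a real obstacle — is the mixed block $\Ric_h(X,\tilde Y)=0$. Here I would check that the general off-diagonal constraint (\ref{VHRic}), which for these data reads
\[
0=\varphi\,\nu(X)\langle\tilde\beta,\tilde Y\lrcorner\tilde\alpha\rangle_{\tilde g}-\langle\tilde\gamma\wedge(X\lrcorner\delta),(\tilde Y\lrcorner\tilde\beta)\wedge\nu\rangle_h+\langle\tilde\varpi\wedge(X\lrcorner\epsilon),(\tilde Y\lrcorner\tilde\gamma)\wedge\delta\rangle_h,
\]
holds identically: the first term vanishes since $\varphi=0$, while the second and third each carry a factor of $\tilde\gamma$ (through $\tilde\gamma\wedge(X\lrcorner\delta)$ and through $\tilde Y\lrcorner\tilde\gamma$, respectively) and hence vanish because $\tilde\gamma=0$. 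Thus the mixed block imposes no extra condition, which is exactly why it is absent from the statement. Gathering the three blocks of the Einstein equation together with the Maxwell and closedness conditions from Proposition~\ref{casesMC}(9) yields the asserted equivalence; and since every step above is an \emph{iff}, both directions are covered simultaneously, so no separate converse argument is needed.
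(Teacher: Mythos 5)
Your proposal is correct and follows essentially the same route as the paper, which obtains this theorem by combining Proposition~\ref{casesMC}(9) for the closedness and Maxwell conditions with Proposition~\ref{GeneralNull} for the Einstein equation, and which likewise notes that the mixed-block constraint \eqref{VHRic} is automatically satisfied here. Your explicit verification that the \eqref{VHRic} terms vanish because $\varphi\tilde\alpha=0$ and $\tilde\gamma\wedge\delta=0$ is exactly the observation the paper records after the theorem statements.
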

 Notice that the last equation in Proposition \ref{GeneralNull}, the one coming from equation \eqref{VHRic}, is satisfied automatically in Theorems \ref{th:VVRicalpha} - \ref{beta-nu-varpi-epsilon}. The only exception is Theorem \ref{alpha-beta-nu} where the consequence $\langle \tilde{\beta}, \tilde{X}\lrcorner \tilde{\alpha} \rangle_{\tilde{g}}=0$ gives additional constraints.  
 
 Let us also pose the following:
\begin{corol}
 All bosonic supergravity backgrounds appearing in this section have vanishing scalar curvature.
\end{corol}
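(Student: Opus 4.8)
The plan is to reduce the statement to the single scalar identity relating scalar curvature to the pointwise norm of the flux form. Recall that, as noted just after Corollary \ref{constantlength}, taking the trace of the supergravity Einstein equation yields $\mathsf{Scal}_h = \tfrac{1}{6}\|\Flu\|_h^2$ for any bosonic supergravity background $(\X^{1,10}, h, \Flu)$. Hence it suffices to show that $\|\Flu\|_h^2$ vanishes identically for every flux form appearing in this section, since then $\mathsf{Scal}_h$ is forced to vanish as well.

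To this end I would invoke the explicit expression \eqref{f2norm},
\[
\|\Flu\|_h^2 = \varphi^2 \|\tilde\alpha\|_{\tilde g}^2 + \|\tilde\beta\|_{\tilde g}^2 \|\nu\|_g^2 + \|\tilde\gamma\|_{\tilde g}^2 \|\delta\|_g^2 + \|\tilde\varpi\|_{\tilde g}^2 \|\epsilon\|_g^2 + \tilde\psi^2 \|\theta\|_g^2,
\]
and observe that in each of the backgrounds of Proposition \ref{GeneralNull} and Theorems \ref{th:VVRicalpha}--\ref{beta-nu-varpi-epsilon}, every differential form living on the Lorentzian factor $\widetilde{M}^{1,4}$ that actually enters $\Flu$ (namely whichever of $\tilde\alpha, \tilde\beta, \tilde\gamma, \tilde\varpi$ are present) is by hypothesis null, i.e. has vanishing $\tilde g$-norm, and moreover $\tilde\psi=0$ throughout, since the term $\tilde\psi\theta$ never occurs in any of these flux forms. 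Consequently each summand on the right-hand side contains a factor equal to zero, whence $\|\Flu\|_h^2 = 0$ at every point.

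Combining the two observations gives $\mathsf{Scal}_h = \tfrac{1}{6}\|\Flu\|_h^2 = 0$, which is the claim. I do not expect any genuine obstacle here: the only thing to verify is the elementary bookkeeping that, case by case, all $\widetilde{M}^{1,4}$-components appearing in the flux form are indeed assumed null and that no $\tilde\psi\theta$-term is present, both of which are immediate from the hypotheses of the respective statements. The proof therefore amounts to citing the trace identity together with \eqref{f2norm} and running once through the seven cases to confirm that each term of the norm vanishes.
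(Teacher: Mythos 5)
Your argument is correct and is essentially the paper's own proof: the paper likewise combines the trace identity $\mathsf{Scal}_h=\tfrac{1}{6}\|\Flu\|_h^2$ with the observation that $\Flu$ is null for every background in this section, and your appeal to \eqref{f2norm} merely spells out why the nullity of $\tilde\alpha,\tilde\beta,\tilde\gamma,\tilde\varpi$ together with $\tilde\psi=0$ forces $\|\Flu\|_h^2=0$. No gap; nothing to change.
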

\begin{proof}This is a simple consequence of the relation 
  $\mathsf{Scal}_h=\frac{1}{6} \| \Flu \|^2_h$ and the fact that the  flux form $\Flu$ is null for all the backgrounds presented above.
\end{proof}

\section{Bosonic supergravity backgrounds for which $\widetilde{M}^{1,4}$ is a Ricci-isotropic Walker manifold} \label{walker}
In order to construct explicit examples of bosonic supergravity backgrounds for which the flux form is composed of null forms, as treated in the previous section, we assume in this section that $\widetilde{M}^{1,4}$ is a {\it Lorentzian  Walker manifold}. Lorentzian Walker manifolds admit a parallel distribution of isotropic lines which we will use to build the Lorentzian part of the flux form $\Flu$. 
 With the aim to further simplify the supergravity Einstein equations, we follow \cite {CG} and will work with special type of totally Ricci-isotropic Walker manifolds, defined below. In Sections \ref{sect:alpha} - \ref{sectvarpi} we consider the simplest type of flux forms, namely those of the form  $\tilde \alpha$, $\tilde \beta \wedge \nu$, $\tilde \gamma \wedge \delta$, and  $\tilde \varpi \wedge \epsilon$. In Section \ref{largeexample} we unify these results by considering the more general flux form  $\varphi \tilde \alpha+\tilde \beta \wedge \nu+\tilde \gamma \wedge \delta+\tilde \varpi \wedge \epsilon$, 
 under the additional condition that the eight involved differential forms $\tilde \alpha,\tilde \beta, \tilde \gamma, \tilde \varpi, \varphi, \nu,\delta,\epsilon$ are closed and coclosed on their respective manifolds. Finally, in Section \ref{sectalphabeta} we consider the flux form $\varphi \tilde \alpha + \tilde \beta \wedge \nu$, without the strict assumption of closedness and coclosedness on each of its components.

\subsection{Ricci-isotropic Walker manifolds and null forms} \label{Ricci-isotropic}
Let us recall  the definition of a Lorentzian Walker manifold. 
\begin{definition}
A {\it Lorentzian Walker manifold} is a Lorentzian manifold that admits a parallel distribution of isotropic (or null) lines.
\end{definition}
 Next we focus on the five-dimensional case.  If $(\widetilde{M}^{1,4},\tilde g)$ is a  Lorentzian Walker manifold, then it is locally diffeomorphic to a product $\mathbb R \times N^3 \times \mathbb R$ of manifolds with coordinates $u$, $x=(x^1,x^2,x^3)$ and $v$, respectively, on which the metric takes the form 
\begin{equation}
\tilde g = 2 \dd u \dd v +\rho+2 A \dd u+H \dd u^2\,. \label{gWalker}
\end{equation}
Here, $\rho=\rho_{ij}(u,x) \dd x^i \dd x^j$ is a family of Riemannian metrics on $N^3$ (parametrized by $u$ and of signature $(-,-,-)$), $A=A_i(u,x) \dd x^i$ is a family of 1-forms on $N^3$, and $H=H(u,x,v)$ is a smooth function on $\widetilde{M}^{1,4}$ (see \cite{Wa50, G10, GL10, CG}). 
In these coordinates,  the distribution spanned by $\partial_v$ consists of isotropic lines and it is parallel, since $\nabla^{\tilde g} \partial_v=\frac{1}{2} H_v \partial_v \otimes \dd u $, where $\nabla^{\tilde g}$ denotes the Levi-Civita connection with respect to $\tilde g$. Observe also that $\dd u=\partial_v \lrcorner \tilde g$ is null, that is, $\langle\dd u,\dd u\rangle_{\tilde g}= 0$. 

Following \cite{CG}, we will further assume the following:
 \begin{equation}
 \partial_v H=0\,, \quad A_i=0 \  \text{for any} \  i=1,2,3\,,\quad    \rho \ \text{is a family of Ricci-flat metrics}\,. \label{cond:RicciIsotropic}
 \end{equation}
  The first condition implies $\nabla^{\tilde g} \partial_v=0$ and consequently $\nabla^{\tilde g} \dd u=0$. Under these assumptions, the Ricci tensor is significantly simplified to
\begin{equation}
\Ric_{\tilde g} = -\frac{1}{2} \Delta_\rho (H)\, \dd u^2\,, \label{RicciWalker}
\end{equation}
where  $\Delta_\rho (H)=\sum_{i, j,k=1}^{3} \rho^{ij} \big(\partial_{x^i} \partial_{x^j} H - \Gamma_{ij}^k \partial_{x^k} H\big)$ is the Laplace-Beltrami operator of the metric $\rho$ applied to $H$ (see \cite{G10}). It follows that the Ricci endomorphism $\mathrm{ric}_{\tilde g}$ is null, i.e. $\langle \mathrm{ric}_{\tilde g},\mathrm{ric}_{\tilde g}\rangle_{\tilde g}=0$.  As in \cite{CG},  we shall slightly abuse the terminology and call Walker metrics satisfying the conditions \eqref{cond:RicciIsotropic}   {\it Ricci-isotropic Walker metrics}, referring to the property that the image of the Ricci endomorphism  related to the Walker metric is totally null (see Definition \ref{defRicciIsotropic}).   Note  that since the 1-form $\dd u$ is null, we can use it to build other null differential forms on $\widetilde{M}^{1,4}$. In particular, if we use $\dd u$ to construct a null 4-form $\Flu$, we may take advantage of the results found in the previous section. 

By Proposition \ref{GeneralNull} we know that if $(X^{1,10}=\widetilde{M}^{1,4}\times M^6,h=\tilde g+g,\Flu)$ is a bosonic supergravity background, then the Ricci tensor of $\widetilde{M}^{1,4}$ satisfies the equation
\[\Ric_{h}(\tilde{X},\tilde{Y})= -\frac{1}{2} \Big(\langle \tilde X\lrcorner \tilde{\alpha},\tilde Y\lrcorner \tilde{\alpha} \rangle_{\tilde{g}} \varphi^2+\langle \tilde{X}\lrcorner \tilde{\beta},\tilde{Y}\lrcorner \tilde{\beta}\rangle_{\tilde{g}} \| \nu \|^2_{ g}
+\langle \tilde{X}\lrcorner \tilde{\gamma},\tilde{Y}\lrcorner \tilde{\gamma}\rangle_{\tilde{g}} \| \delta \|^2_{ g}
+\tilde{\varpi}(\tilde{X})\tilde{\varpi}(\tilde{Y})\| \epsilon \|^2_{ g} \Big)\,.
\]
When  $(\widetilde{M}^{1,4},\tilde g)$ is a  Ricci-isotropic Walker manifold of the type described above, the right-hand-side of this equation must be the same type of tensor as in \eqref{RicciWalker}.  The following lemma shows that this happens when $\tilde \alpha, \tilde \beta, \tilde \gamma, \tilde \varpi$ are of the form $\dd u \wedge \omega(u)$, for some $\omega(u) \in \Omega^k(N^3)$. Here,  the  notation indicates that the differential forms on $N^3$ are parametrized by $u$. We remind that the metric $\rho$ in general also depends on $u$ even though our notation does not emphasize this. 
\begin{lemma} \label{du2vanish}
Let $\omega(u) \in \Omega^k(N^3)$ be a $k$-form and let  $\tilde{g} = 2 \dd u \dd v+\rho+H(u,x) \dd u^2$ be a metric on $\widetilde{M}^{1,4} = \mathbb R \times N^3 \times \mathbb R$. Then 
\[\langle \tilde X\lrcorner (\dd u \wedge \omega(u)),\tilde Y\lrcorner ( \dd u \wedge \omega(u))  \rangle_{\tilde{g}}=a_1 a_2 \langle \omega(u), \omega(u) \rangle_{\tilde g}\,,
\]
where $a_1=\tilde X \lrcorner \dd u$ and $a_2=\tilde Y \lrcorner \dd u$. In particular, the expression vanishes, unless both $\tilde X \lrcorner \dd u$ and $\tilde Y \lrcorner \dd u$ are nonzero.
\end{lemma}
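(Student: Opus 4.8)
The plan is to expand the contraction $\tilde X \lrcorner (\dd u \wedge \omega(u))$ using the graded Leibniz rule for the interior product. Since $\dd u$ has degree one, this gives $\tilde X \lrcorner (\dd u \wedge \omega(u)) = a_1\, \omega(u) - \dd u \wedge (\tilde X \lrcorner \omega(u))$, with $a_1 = \tilde X \lrcorner \dd u$, and likewise $\tilde Y \lrcorner (\dd u \wedge \omega(u)) = a_2\, \omega(u) - \dd u \wedge (\tilde Y \lrcorner \omega(u))$ with $a_2 = \tilde Y \lrcorner \dd u$. Substituting these into the inner product and expanding bilinearly produces four terms: the desired term $a_1 a_2 \langle \omega(u),\omega(u)\rangle_{\tilde g}$, two cross terms of the shape $\langle \omega(u), \dd u \wedge (\tilde Y \lrcorner \omega(u))\rangle_{\tilde g}$, and the quadratic term $\langle \dd u \wedge (\tilde X \lrcorner \omega(u)), \dd u \wedge (\tilde Y \lrcorner \omega(u))\rangle_{\tilde g}$. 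The whole content of the lemma is that the last three terms vanish.

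The main tool will be the adjoint relation $\langle \dd u \wedge \xi, \eta\rangle_{\tilde g} = \langle \xi, (\dd u)^{\sharp} \lrcorner \eta\rangle_{\tilde g}$, which holds in any signature because $\tilde g$ is non-degenerate. For the Walker metric \eqref{gWalker} with the assumptions \eqref{cond:RicciIsotropic}, one reads off from the inverse metric that $(\dd u)^{\sharp} = \partial_v$. Two elementary facts then drive the argument. First, $\dd u$ is null; more precisely $\partial_v \lrcorner \dd u = \dd u(\partial_v) = 0$, consistent with $\langle \dd u,\dd u\rangle_{\tilde g}=0$. Second, since $\omega(u) \in \Omega^k(N^3)$ is built from the $\dd x^i$ alone, any interior product $\tilde X \lrcorner \omega(u)$ is again a form in the $\dd x^i$ alone, so that $\partial_v \lrcorner \omega(u) = 0$ and $\partial_v \lrcorner (\tilde X \lrcorner \omega(u)) = 0$.

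With these facts the three unwanted terms collapse. Each cross term becomes, by the adjoint relation, $\langle \dd u \wedge (\tilde Y \lrcorner \omega(u)), \omega(u)\rangle_{\tilde g} = \langle \tilde Y \lrcorner \omega(u), \partial_v \lrcorner \omega(u)\rangle_{\tilde g} = 0$, using horizontality of $\omega(u)$. For the quadratic term, the adjoint relation followed by the Leibniz rule gives $\partial_v \lrcorner (\dd u \wedge (\tilde Y \lrcorner \omega(u))) = (\partial_v \lrcorner \dd u)(\tilde Y \lrcorner \omega(u)) - \dd u \wedge (\partial_v \lrcorner (\tilde Y \lrcorner \omega(u))) = 0$, since both contractions with $\partial_v$ vanish; hence the quadratic term equals $\langle \tilde X \lrcorner \omega(u), 0\rangle_{\tilde g} = 0$. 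What survives is precisely $a_1 a_2 \langle \omega(u),\omega(u)\rangle_{\tilde g}$, which is the assertion, and the final remark — that the expression vanishes unless both $a_1$ and $a_2$ are nonzero — is then immediate.

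I expect the only genuine subtlety, the ``hard part'' such as it is, to be the correct bookkeeping of the null condition: the temptation is to treat $\dd u$ like a unit covector and write $\partial_v \lrcorner \dd u = 1$, which would spoil every cancellation. Keeping firmly in view that $\langle \dd u,\dd u\rangle_{\tilde g}=0$ and $\partial_v \lrcorner \dd u = 0$, while $\dd u \wedge \cdot$ nonetheless has $\partial_v \lrcorner \cdot$ as its adjoint, is exactly what forces the three unwanted terms to disappear.
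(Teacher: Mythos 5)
Your proof is correct and follows essentially the same route as the paper's: both expand $\tilde X \lrcorner(\dd u \wedge \omega(u))$ via the graded Leibniz rule and then show that the cross terms and the quadratic term vanish, leaving $a_1a_2\langle\omega(u),\omega(u)\rangle_{\tilde g}$. The only cosmetic difference is in how those terms are killed: the paper notes directly that $\langle \dd u,\dd u\rangle_{\tilde g}=\langle \dd u,\dd x^i\rangle_{\tilde g}=0$ forces $\langle \dd u\wedge\omega_1,\omega_2\rangle_{\tilde g}=0$ for forms built from $\dd u,\dd x^i$, whereas you invoke the wedge/interior-product adjunction with $(\dd u)^{\sharp}=\partial_v$ together with $\partial_v\lrcorner \dd u=0$ and horizontality of $\omega(u)$ --- two equivalent ways of exploiting the same degeneracy.
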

\begin{proof}
We have $\langle \dd u,\dd u\rangle_{\tilde g}=0$ and $  \langle \dd u,  \dd x^i\rangle_{\tilde g}=0$, 
which implies $\langle \dd u \wedge \omega_1, \omega_2  \rangle_{\tilde g}=0$
for every $k$-form $\omega_2$ and every $(k-1)$-form $\omega_1$ on $\mathbb R\times N^3$. 
Let $\tilde X= a_1 \partial_u+\sum_{i=1}^3 b^i_1 \partial_{x^i}+c_1 \partial_v$ and  $\tilde Y= a_2 \partial_u+\sum_{i=1}^3 b^i_2 \partial_{x^i}+c_2 \partial_v$.  Then, for $\tilde \omega=\dd u \wedge \omega(u)$ we compute
\begin{align*}
\langle \tilde X\lrcorner \tilde \omega,\tilde Y\lrcorner \tilde \omega  \rangle_{\tilde{g}} &= \langle a_1 \omega(u) - \dd u \wedge (\sum_{i=1}^3 b^i_1 \partial_{x^i} \lrcorner \omega(u) ), a_2 \omega(u) - \dd u \wedge (\sum_{i=1}^3 b^i_2 \partial_{x^i} \lrcorner \omega(u) ) \rangle_{\tilde g} \\&=a_1 a_2 \langle \omega(u), \omega(u) \rangle_{\tilde g}\,.
\end{align*}
\end{proof}

An important subclass of Ricci-isotropic  Lorentzian Walker metrics,  which we may use in our study to construct explicit examples of bosonic supergravity backgrounds, consists of the so-called   {\it pp-waves} (see \cite{F00, Leistner} for details). Locally, in five dimensions such manifolds are of the form (\ref{gWalker}), with $A=0$, $\rho =- (\dd x^1)^2-(\dd x^2)^2-(\dd x^3)^2$ and $\partial_v H=0$, and so topologically  $\widetilde{M}^{1, 4}=\R\times\R^3\times\R\cong\R^5$. 
In particular, we have $\Delta_\rho H=- \sum_{i=1}^3 H_{x^i x^i}$.
\begin{remark}
Walker manifolds  provide  examples of  indefinite metrics that exhibit various geometric aspects (see for example \cite{Bryant, Gib, GL10} for the Lorentzian version of such manifolds).  For  instance, the pp-waves  form one of the simplest and well-known classes of Lorentzian Walker manifolds. On the other hand,  {\it (totally) Ricci-isotropic} Lorentzian manifolds   are known to be important in holonomy theory of indefinite metrics (see for example \cite{Galaev15}), and their Ricci tensor attains a   simplified expression (\cite{G10}). Due to this special holonomy feature,  Ricci-isotropic Lorentzian Walker manifolds have many natural applications in supergravity theories, see for instance \cite{Bran, Col, Gib2,  Bryant, F00, CG}.
\end{remark} 

 In the remainder of this section, we apply the results from the previous sections to the case where $(\widetilde{M}^{1,4}, \tilde g)$ is a  Lorentzian Walker manifold satisfying \eqref{cond:RicciIsotropic}.

\subsection{Results concerning the flux form $\Flu=\tilde{\alpha}$} \label{sect:alpha}

Let us consider the non-trivial 4-form $\Flu=\tilde{\alpha}= \dd u \wedge f(u,x) \vol_\rho$, where $\vol_\rho$ denotes the (in general $u$-dependent) volume form of the metric $\rho$ on $N^3$. 
\begin{prop} \label{propalpha}
Let $(M^{6},g)$ be a Ricci-flat Riemannian manifold and let $(\widetilde{M}^{1,4},\tilde g=2 \dd u  \dd v+\rho+H \dd u^2)$ be a Walker manifold with $\rho$ Ricci-flat and $\partial_v(H)=0$. Then $(\X^{1,10}= \widetilde{M}^{1,4} \times M^6,h=\tilde g+g, \Flu=f(x,u) \dd u \wedge \vol_\rho)$ is a bosonic supergravity background if and only if $\partial_{x^i} (f)=0$ for $i=1,2,3$ and 
\[ \Delta_\rho H=-f^2.\]
\end{prop}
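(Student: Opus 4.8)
The plan is to deduce the statement from Theorem~\ref{th:VVRicalpha}, which already characterizes when $\Flu=\tilde\alpha$ with $\tilde\alpha$ null produces a bosonic supergravity background. First I would confirm that $\tilde\alpha=\dd u\wedge f\vol_\rho$ is null: applying Lemma~\ref{du2vanish} with $\omega(u)=f\vol_\rho$, or simply using $\langle \dd u,\dd u\rangle_{\tilde g}=0$ together with $\langle \dd u,\dd x^i\rangle_{\tilde g}=0$, gives $\langle \tilde\alpha,\tilde\alpha\rangle_{\tilde g}=0$. Since $(M^6,g)$ is Ricci-flat by hypothesis, Theorem~\ref{th:VVRicalpha} then reduces the problem to verifying two things: (i) that $\tilde\alpha$ is closed and co-closed on $\widetilde M^{1,4}$, and (ii) that the reduced Einstein equation \eqref{VVRicalpha} holds.

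For (i), closedness is automatic: writing $\tilde\alpha=f\sqrt{|\det\rho|}\,\dd u\wedge \dd x^1\wedge \dd x^2\wedge \dd x^3$, the only term in $\dd\tilde\alpha$ not killed by a repeated factor is the $\partial_v$-derivative of the coefficient, which vanishes because neither $f$ nor $\rho$ depends on $v$. For co-closedness I would compute $\star_5\tilde\alpha$. Passing to a null coframe adapted to the Walker structure (so that $\tilde g=2\,\dd u\,(\dd v+\tfrac12 H\,\dd u)+\rho$) and using the Hodge identities of Lemma~\ref{tools}, one finds $\star_5(\dd u\wedge\vol_\rho)=\pm\dd u$, hence $\star_5\tilde\alpha=\pm f\,\dd u$ by tensoriality of $\star_5$. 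Then $\dd\star_5\tilde\alpha=\pm\,\dd f\wedge \dd u=\pm\sum_i\partial_{x^i}f\,\dd x^i\wedge \dd u$, since the $\partial_u f$ term wedges to zero and $\partial_v f=0$. As the $2$-forms $\dd x^i\wedge\dd u$ are independent, $\tilde\alpha$ is co-closed if and only if $\partial_{x^i}f=0$ for $i=1,2,3$, which is the first asserted condition.

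For (ii), I would combine the Walker Ricci formula \eqref{RicciWalker}, namely $\Ric_{\tilde g}=-\tfrac12\Delta_\rho(H)\,\dd u^2$ (recall $\Ric_h(\tilde X,\tilde Y)=\Ric_{\tilde g}(\tilde X,\tilde Y)$ on a product), with Lemma~\ref{du2vanish}. The latter gives
\[
\langle \tilde X\lrcorner\tilde\alpha,\tilde Y\lrcorner\tilde\alpha\rangle_{\tilde g}=(\tilde X\lrcorner \dd u)(\tilde Y\lrcorner \dd u)\,f^2\,\langle \vol_\rho,\vol_\rho\rangle_{\tilde g},
\]
and since $\rho$ has signature $(-,-,-)$ one has $\langle\vol_\rho,\vol_\rho\rangle_{\tilde g}=(-1)^3=-1$. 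Substituting into \eqref{VVRicalpha} turns its right-hand side into $\tfrac12 f^2\,\dd u(\tilde X)\,\dd u(\tilde Y)$, so \eqref{VVRicalpha} becomes $-\tfrac12\Delta_\rho(H)=\tfrac12 f^2$, i.e. $\Delta_\rho H=-f^2$. Conversely the two conditions $\partial_{x^i}f=0$ and $\Delta_\rho H=-f^2$ force co-closedness and \eqref{VVRicalpha}, so together with the automatic closedness they yield the full equivalence.

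I expect the only genuinely delicate step to be the Hodge-star computation $\star_5(\dd u\wedge\vol_\rho)=\pm\dd u$ and, in parallel, the evaluation $\langle\vol_\rho,\vol_\rho\rangle_{\tilde g}=-1$: both require care with the degenerate null direction $\dd u$ and with the sign bookkeeping forced by the ``mostly minus'' convention. Everything else is a direct specialization of Theorem~\ref{th:VVRicalpha}, Lemma~\ref{du2vanish} and the identity \eqref{RicciWalker}; in particular the off-diagonal and $M^6$-components of the Einstein equation are absorbed into Theorem~\ref{th:VVRicalpha} and need not be revisited.
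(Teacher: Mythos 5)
Your proposal is correct and follows essentially the same route as the paper: both reduce the statement to Theorem~\ref{th:VVRicalpha}, use Lemma~\ref{du2vanish} to localize the contraction term to the $\dd u^2$ component, and match against the Walker Ricci formula \eqref{RicciWalker} to obtain $\Delta_\rho H=-f^2$. The only difference is that you spell out the computations the paper leaves implicit (the identity $\star_5(\dd u\wedge\vol_\rho)=\pm\dd u$ behind the co-closedness criterion, and the sign $\langle\vol_\rho,\vol_\rho\rangle_{\tilde g}=-1$), and these are carried out correctly.
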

\begin{proof}
Since $\tilde{\alpha}$ does not depend on $v$, we have $\dd \tilde{\alpha} = 0$. The condition $\dd \star_5 \tilde{\alpha}=0$ is equivalent to $\partial_{x^i}(f)=0$ for $i=1,2,3$. From Lemma \ref{du2vanish} we see that $\langle \tilde{X}\lrcorner \tilde{\alpha},\tilde{Y}\lrcorner \tilde{\alpha} \rangle_{\tilde{g}}=0$ for every pair $\tilde{X}$, $\tilde{Y}$ on which $\dd u^2$ vanishes. The statement then follows from Theorem \ref{th:VVRicalpha} since we get 
\[
\Ric_{\tilde g} = \frac{1}{2} f^2 \dd u^2\,,
\]
 which by \eqref{RicciWalker} is equivalent to $\Delta_\rho H=-f^2$. 
\end{proof}

\begin{example} \label{Alpha}
For an explicit example, let $(M^6,g)$ be a Ricci-flat Riemannian  manifold and let $(\widetilde{M}^{1,4},\tilde g=2 \dd u \dd v-\sum_{i=1}^3 (\dd x^i)^2+H \dd u^2)$ be a five-dimensional pp-wave. Since $ \Delta_\rho H=- \sum_{i=1}^3 H_{x^i x^i}$, the equation $\Delta_\rho H=-f^2$ is satisfied when $H=\frac{1}{6} f(u)^2 \sum_{i=1}^3 (x^i)^2$. Thus, with this choice of $H$,
\[(\X^{1,10}=\widetilde{M}^{1,4} \times M^6, h=\tilde g+g, \Flu= f(u) \dd u \wedge \dd x^1 \wedge \dd x^2 \wedge \dd x^3)\]
is an eleven-dimensional bosonic supergravity background. 
\end{example}

\subsection{Results concerning the flux form $\Flu=\tilde{\beta}\wedge\nu$} \label{sectbeta}
Let us now consider a   Ricci-flat Riemannian manifold $M^6=\mathbb R \times \Sigma$ with metric $g=-\dd t^2-\mu$, where $\mu$ is a positive definite metric on the five-dimensional manifold $\Sigma$. 

\begin{prop}\label{propbeta}
Let $(M^6=\mathbb R \times \Sigma,g)$ be a  Ricci-flat Riemannian manifold with metric $g=-\dd t^2-\mu$ and let $(\widetilde{M}^{1,4}=\mathbb R \times N^3 \times \mathbb R,\tilde g=2 \dd v \dd u+\rho+H \dd u^2)$ be a Walker manifold with  $\partial_v(H)=0$ and $\rho$ $u$-independent and Ricci-flat.  Set $\nu=\dd t$ and $\tilde{\beta}=\dd u\wedge \omega$ for a closed and coclosed 2-form $\omega$ on $N^3$. Then $(\X^{1,10}=\widetilde{M}^{1,4}\times M^6,h=\tilde{g}+g,\Flu=\tilde{\beta}\wedge \nu)$ is a bosonic supergravity background if and only if 
$$
\Delta_\rho H=-\|\omega\|^2_{ \rho}.
$$
\end{prop}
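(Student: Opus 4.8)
The plan is to deduce the statement directly from Theorem \ref{th:VVricbeta}, which already characterizes when a background with flux form $\Flu = \tilde\beta\wedge\nu$ ($\tilde\beta$ null) solves the bosonic supergravity equations. First I would verify the hypotheses of that theorem. The form $\tilde\beta = \dd u\wedge\omega$ is null: since $\dd u$ is null and $\tilde g$-orthogonal to the $N^3$-directions, the computation underlying Lemma \ref{du2vanish} gives $\langle\tilde\beta,\tilde\beta\rangle_{\tilde g} = \langle\dd u\wedge\omega,\dd u\wedge\omega\rangle_{\tilde g}=0$. The factor $M^6$ is Ricci-flat by assumption, so what remains is to check the closedness and coclosedness of $\tilde\beta$ and $\nu$ and then to translate the curvature condition \eqref{VVRicbeta} into the stated PDE.

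For $\nu=\dd t$, closedness is automatic, and since $g=-\dd t^2-\mu$ is a product metric with $\mu$ independent of $t$, one has $\star_6\dd t=\pm\vol_\Sigma$, which is closed; hence $\nu$ is coclosed. For $\tilde\beta$, closedness follows from $\dd(\dd u\wedge\omega)=-\dd u\wedge\dd\omega=-\dd u\wedge\dd_{N^3}\omega=0$, the last equality holding because $\omega$ is closed on $N^3$ and any $\partial_u$-contribution is annihilated by $\dd u\wedge\dd u=0$. The coclosedness of $\tilde\beta$ is the one point requiring care: working in the null coframe $e^+=\dd u$, $e^-=\dd v+\frac{1}{2}H\dd u$ together with a $\rho$-orthogonal coframe, so that $\tilde g=2e^+e^-+\rho$ and $\vol_{\widetilde M}=e^+\wedge e^-\wedge\vol_\rho$, a direct evaluation of the defining relation $\phi\wedge\star_5\psi=\langle\phi,\psi\rangle_{\tilde g}\vol_{\widetilde M}$ yields $\star_5(\dd u\wedge\omega)=\pm\,\dd u\wedge\star_\rho\omega$. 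Since $\rho$ is $u$-independent, $\dd\star_5\tilde\beta=\pm\dd u\wedge\dd_{N^3}(\star_\rho\omega)$, which vanishes precisely because $\omega$ is coclosed on $N^3$.

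It then remains to read off the curvature equation. On the left-hand side, the hypotheses $\partial_v H=0$, $A=0$ and $\rho$ Ricci-flat are exactly \eqref{cond:RicciIsotropic}, so \eqref{RicciWalker} gives $\Ric_{\tilde g}=-\frac{1}{2}\Delta_\rho(H)\,\dd u^2$. On the right-hand side, Lemma \ref{du2vanish} gives $\langle\tilde X\lrcorner\tilde\beta,\tilde Y\lrcorner\tilde\beta\rangle_{\tilde g}=\dd u(\tilde X)\,\dd u(\tilde Y)\,\|\omega\|_\rho^2$, while $\|\nu\|_g^2=\langle\dd t,\dd t\rangle_g=-1$. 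In particular both sides of \eqref{VVRicbeta} are multiples of the symmetric tensor $\dd u\otimes\dd u$, so comparing their coefficients (valid for all $\tilde X,\tilde Y$) turns the curvature equation into $-\frac{1}{2}\Delta_\rho H=\frac{1}{2}\|\omega\|_\rho^2$, i.e. $\Delta_\rho H=-\|\omega\|_\rho^2$, as claimed.

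The main obstacle is the Hodge-star identity $\star_5(\dd u\wedge\omega)=\pm\,\dd u\wedge\star_\rho\omega$ underlying the coclosedness of $\tilde\beta$: because $e^+=\dd u$ is a null direction the $(+,-)$ block of $\tilde g$ is off-diagonal, so one cannot directly invoke the product-type formulas of Lemma \ref{tools}, and the sign and orientation bookkeeping in the null coframe must be carried out by hand. Everything else is a straightforward substitution into the already-established Theorem \ref{th:VVricbeta} and the Walker Ricci formula \eqref{RicciWalker}.
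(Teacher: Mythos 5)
Your proposal is correct and follows essentially the same route as the paper: both reduce the claim to Theorem \ref{th:VVricbeta}, verify closedness and coclosedness of $\nu$ and $\tilde\beta$ via the identity $\star_5(\dd u\wedge\omega)=-\dd u\wedge\star_\rho\omega$, and then equate the coefficients of $\dd u^2$ using \eqref{RicciWalker}, Lemma \ref{du2vanish} and $\|\nu\|_g^2=-1$. The only difference is presentational: the paper obtains the Hodge-star identity by applying the pointwise product formula of Lemma \ref{tools} to the orthogonal splitting (writing $\star_5\tilde\beta=\star_2\dd u\wedge\star_\rho\omega$), whereas you recompute it by hand in a null coframe.
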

\begin{proof}
It is clear that $\tilde \beta$ is closed, and that $\nu$ is closed and coclosed. It follows from  $\star_5 \tilde \beta= \star_2 \dd u \wedge \star_\rho \omega=-\dd u \wedge \star_\rho \omega$ that $\tilde \beta$ is coclosed. 
We also see that $\langle \tilde{X}\lrcorner \tilde{\beta},\tilde{Y}\lrcorner \tilde{\beta} \rangle_{\tilde{g}}=0$ for every pair $\tilde X, \tilde Y$ on which $\dd u^2$ vanishes. Thus it follows from Theorem\ \ref{th:VVricbeta} that the Ricci tensor is given by 
\[
\Ric_h = -\frac{1}{2} \|\omega\|_{\tilde g}^2 \| \nu \|_g^2 \dd u^2\,.
\]
This equivalent to $\Delta_\rho H=-\|\omega\|^2_{\tilde g}=-\|\omega\|^2_{\rho}$, since $\|\nu\|_g^2=-1$.
\end{proof}

\begin{example}
\label{Beta and Nu}
Let $(\widetilde{M}^{1,4}=\mathbb{R}\times N^3\times \mathbb{R},\tilde{g}=2 \dd v \dd u-\sum_{i=1}^3 (\dd x^i)^2+H \dd u^2)$ be a pp-wave with $H=\frac{1}{6} \sum_{i=1}^3 (x^i)^2$. Let $(M^6,g)$ and $\nu$ be as described in the proposition above and set $\tilde{\beta}= \dd u\wedge \dd x^1\wedge \dd x^2$. Then 
$$
(\X^{1,10}=\widetilde{M}^{1,4}\times M^6,h=\tilde{g}+g,\Flu=\tilde{\beta}\wedge \nu)
$$
is an eleven-dimensional bosonic supergravity background.
\end{example}

\subsection{Results concerning the flux form $\Flu=\tilde{\gamma}\wedge\delta$} \label{sectgamma}

Let $\tilde \gamma=\dd u \wedge \zeta$ for a 1-form $\zeta$ on $N^3$ and assume that $M^6$ is a Calabi-Yau manifold and that $\delta$ is its K\"ahler form.

\begin{prop} \label{propgamma}
Consider a six-dimensional Calabi-Yau manifold $(M^6,g, \delta)$, where $\delta$ denotes the K\"ahler form, and a five-dimensional Walker manifold $(\widetilde{M}^{1,4}=\mathbb{R}\times N^3\times \mathbb{R},\tilde{g}=2\dd v\dd u+\rho+H\dd u^2)$ with $\rho$ $u$-independent and Ricci-flat, and $\partial_v(H)=0$.  Set $\tilde{\gamma}=\dd u\wedge \zeta$ for some closed and coclosed $1$-form $\zeta$ on $N^3$.  Then $(\X^{1,10}=\widetilde{M}^{1,4}\times M, h=\tilde{g}+g, \Flu=\tilde{\gamma}\wedge \delta)$ is a bosonic supergravity background if and only if
\begin{equation} \label{DeltaHGammaDelta}
\Delta_\rho H=\|\zeta\|^2_{\rho}\|\delta\|^2_g.
\end{equation}
\end{prop}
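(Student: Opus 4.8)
The plan is to verify the hypotheses of Theorem \ref{th:VVricGamma}, thereby reducing the bosonic supergravity equations for $\Flu=\tilde\gamma\wedge\delta$ to a single scalar PDE on the Walker data. Since $\tilde\gamma=\dd u\wedge\zeta$ is built from the null $1$-form $\dd u$, it is itself null, so Theorem \ref{th:VVricGamma} applies. First I would dispose of the conditions that hold automatically: a Calabi--Yau manifold is Ricci-flat and K\"ahler, so $M^6$ is Ricci-flat and its K\"ahler form $\delta$ is both closed and coclosed; and since $\zeta$ is closed on $N^3$ and $\dd u$ is closed, $\tilde\gamma=\dd u\wedge\zeta$ is closed. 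This leaves the Maxwell-type constraint of \eqref{VVRicgamma} and the Ricci equation to analyze.

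For the Maxwell-type constraint I note that $\tilde\gamma\wedge\tilde\gamma=(\dd u\wedge\zeta)\wedge(\dd u\wedge\zeta)=0$, because $\dd u\wedge\dd u=0$, so the right-hand side $\tfrac{1}{2}\tilde\gamma\wedge\tilde\gamma\wedge\delta\wedge\delta$ vanishes and, by the remark following Theorem \ref{th:VVricGamma}, it suffices to show that $\tilde\gamma$ is coclosed. Following the Hodge-star computation carried out in the proof of Proposition \ref{propbeta}, I would compute $\star_5\tilde\gamma=\pm\,\dd u\wedge\star_\rho\zeta$; then $\dd\star_5\tilde\gamma=\pm\,\dd u\wedge\dd(\star_\rho\zeta)$, and since the $\partial_u$-part is annihilated by $\dd u\wedge\dd u=0$, this equals $\pm\,\dd u\wedge\dd_{N^3}(\star_\rho\zeta)$, which vanishes precisely because $\zeta$ is coclosed with respect to $\rho$ on $N^3$ (here the $u$-independence of $\rho$ makes this condition unambiguous).

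Finally, for the Ricci equation I would apply Lemma \ref{du2vanish} with $\omega=\zeta$ to obtain $\langle\tilde X\lrcorner\tilde\gamma,\tilde Y\lrcorner\tilde\gamma\rangle_{\tilde g}=(\dd u(\tilde X))(\dd u(\tilde Y))\,\|\zeta\|^2_{\tilde g}$, and observe that for a $1$-form $\zeta$ on $N^3$ the inverse Walker metric gives $\|\zeta\|^2_{\tilde g}=\|\zeta\|^2_\rho$, since the $u,v$ block of $\tilde g^{-1}$ does not couple to the $\dd x^i$. The Ricci condition in \eqref{VVRicgamma} then reads $\Ric_h=-\tfrac{1}{2}\|\zeta\|^2_\rho\,\|\delta\|^2_g\,\dd u^2$. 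Comparing with the intrinsic Walker Ricci tensor \eqref{RicciWalker}, namely $\Ric_{\tilde g}=-\tfrac{1}{2}\Delta_\rho(H)\,\dd u^2$, the two sides agree if and only if $\Delta_\rho H=\|\zeta\|^2_\rho\,\|\delta\|^2_g$, which is exactly \eqref{DeltaHGammaDelta}. The only genuinely computational step is the evaluation of $\star_5\tilde\gamma$ and the resulting coclosedness of $\tilde\gamma$; everything else follows formally from the general theorem and the two lemmas, so I expect the Hodge-star bookkeeping to be the main (and rather mild) obstacle.
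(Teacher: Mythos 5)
Your proposal is correct and follows essentially the same route as the paper: both reduce the claim to Theorem \ref{th:VVricGamma}, observe that $\tilde\gamma\wedge\tilde\gamma=0$ kills the Maxwell-type constraint once $\tilde\gamma$ and $\delta$ are closed and coclosed, and then match the Ricci condition against \eqref{RicciWalker} via Lemma \ref{du2vanish}. Your explicit Hodge-star verification that $\tilde\gamma$ is coclosed, and the remark that $\|\zeta\|^2_{\tilde g}=\|\zeta\|^2_\rho$, are details the paper leaves implicit (it only notes parenthetically that $\|\delta\|_g^2$ is constant for the K\"ahler form, which is what makes the right-hand side of \eqref{DeltaHGammaDelta} well-posed), but there is no substantive difference in approach.
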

\begin{proof}
We use Theorem \ref{th:VVricGamma}. Since $\tilde \gamma \wedge \tilde \gamma =0$, the Maxwell equation and closedness condition are satisfied when $\tilde \gamma$ and $\delta$ are closed and coclosed. Since $\delta$ is the K\"ahler form, it is closed and coclosed, and the same holds for $\tilde\gamma$. Since $(M^6,g,\delta)$ is a Calabi-Yau manifold, it is Ricci-flat. We also see that $\langle \tilde{X}\lrcorner \tilde{\gamma},\tilde{Y}\lrcorner \tilde{\gamma} \rangle_{\tilde{g}}=0$ for every pair $\tilde X, \tilde Y$ on which $\dd u^2$ vanishes. It follows from (\ref{VVRicgamma}) that 
\[
\Ric_{h}
=-\frac{1}{2}\|\zeta\|^2_{\tilde{g}} \|\delta\|^2_g \dd u^2\,,
\]
 which is equivalent to \eqref{DeltaHGammaDelta}  (note that the function $\|\delta\|_g^2$ is constant  since it is the K\"ahler form).  This proves our claim.
\end{proof}


\subsection{Results concerning the flux form $\Flu=\tilde{\varpi}\wedge\epsilon$} \label{sectvarpi}

\begin{prop} \label{propvarpi}
Let $(M^6,g)$ be a Riemannian Ricci-flat manifold and $\epsilon$ a closed and coclosed $3$-form on $M^6$. Let $(\widetilde{M}^{1,4}=\mathbb{R}\times N^3\times \mathbb{R},\tilde{g}=2\dd v\dd u+\rho+H\dd u^2)$ be a Walker manifold with $\rho$ $u$-independent and Ricci-flat and $\partial_v(H)=0$. Set $\tilde{\varpi}=\dd u$. Then $(\X^{1,10}=\widetilde{M}^{1,4}\times M^6,h=\tilde{g}+g,\Flu=\tilde{\varpi}\wedge \epsilon)$ is a bosonic supergravity background if and only if $\|\epsilon\|_{g}^2$ is constant and 
$$
\Delta H=\| \epsilon \|^2_{ g}.
$$
\end{prop}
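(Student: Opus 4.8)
The plan is to invoke Theorem \ref{th:VVricvarpi} directly, since the hypotheses of the proposition have been arranged precisely so that this general result applies, and then to translate its conclusion through the explicit Ricci tensor \eqref{RicciWalker} of a Ricci-isotropic Walker manifold. First I would check that $\tilde\varpi=\dd u$ meets the standing assumptions of Theorem \ref{th:VVricvarpi}. Nullity is immediate, since $\langle \dd u,\dd u\rangle_{\tilde g}=0$ for the Walker metric \eqref{gWalker}, and closedness is trivial because $\dd(\dd u)=0$. For coclosedness I would use that the assumptions \eqref{cond:RicciIsotropic} (in particular $\partial_v H=0$ together with $A_i=0$) force $\nabla^{\tilde g}\dd u=0$, as recalled in Section \ref{Ricci-isotropic}; a parallel $1$-form is in particular harmonic, so $\dd\star_{5}\tilde\varpi=0$. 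The remaining hypotheses needed by Theorem \ref{th:VVricvarpi}, namely that $M^6$ is Ricci-flat and that $\epsilon$ is closed and coclosed, are assumed outright.

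With these verified, Theorem \ref{th:VVricvarpi} reduces the problem to the single tensorial equation \eqref{VVRicvarpi}, that is, $\Ric_{h}(\tilde X,\tilde Y)=-\tfrac12\,\tilde\varpi(\tilde X)\,\tilde\varpi(\tilde Y)\,\|\epsilon\|_g^2$. Substituting $\tilde\varpi=\dd u$, the symmetric $2$-tensor $\tilde\varpi(\tilde X)\tilde\varpi(\tilde Y)$ is exactly $\dd u^2$, so the equation becomes $\Ric_{\tilde g}=-\tfrac12\,\|\epsilon\|_g^2\,\dd u^2$. I would then compare this with the explicit expression \eqref{RicciWalker} for the Ricci tensor of a Ricci-isotropic Walker manifold, namely $\Ric_{\tilde g}=-\tfrac12\,\Delta_\rho(H)\,\dd u^2$. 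Matching the coefficients of $\dd u^2$ yields $\Delta_\rho H=\|\epsilon\|_g^2$, which is the asserted equation.

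The one point that deserves care — and which I regard as the only real subtlety — is that the left-hand side $\Delta_\rho H$ is a function on $\widetilde{M}^{1,4}$, while $\|\epsilon\|_g^2$ is a function on $M^6$; equality of the two therefore forces each of them to be constant. This is exactly why the constancy of $\|\epsilon\|_g^2$ is recorded in the statement, and it is consistent with the remark following Theorem \ref{th:VVricvarpi} and with Corollary \ref{constantlength}(4), where it is noted that $\|\epsilon\|_g^2$ is constant and negative (the latter because $g$ is negative definite on $3$-forms). For the converse direction, if $\|\epsilon\|_g^2$ is constant and $\Delta_\rho H=\|\epsilon\|_g^2$, then the computation above runs backwards: \eqref{VVRicvarpi} holds, all the hypotheses of Theorem \ref{th:VVricvarpi} are satisfied, and hence $(\X^{1,10}=\widetilde{M}^{1,4}\times M^6,h=\tilde g+g,\Flu=\tilde\varpi\wedge\epsilon)$ is a bosonic supergravity background.
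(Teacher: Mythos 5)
Your proposal is correct and takes essentially the same route as the paper: both invoke Theorem \ref{th:VVricvarpi}, substitute $\tilde\varpi=\dd u$ so that the right-hand side of \eqref{VVRicvarpi} becomes a multiple of $\dd u^2$, match against the Walker Ricci formula \eqref{RicciWalker} to get $\Delta_\rho H=\|\epsilon\|_g^2$, and deduce constancy of $\|\epsilon\|_g^2$ from the fact that the two sides live on different factors. Your justification of the coclosedness of $\dd u$ via its parallelism is slightly more explicit than the paper's, which simply asserts it as clear.
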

\begin{proof}
We use Theorem \ref{th:VVricvarpi}. It is clear that $\tilde{\varpi}=\dd u$ is both closed and coclosed.  We have $\tilde{\varpi}(\tilde X) \tilde{\varpi}(\tilde Y)=0$ for every pair $\tilde X, \tilde Y$ on which $\dd u^2$ vanishes. Thus, by  \eqref{VVRicvarpi} the Ricci tensor takes the form 
\[
\Ric_h = -\frac{1}{2} \|\epsilon\|_g^2 \dd u^2\,,
\]
 or, equivalently, $\Delta_\rho H=\| \epsilon \|^2_{ g}$. Since the left-hand-side of this equation is a function on $\widetilde{M}^{1,4}$, $\|\epsilon\|_g^2$ must be constant. This completes the proof.
\end{proof}
We illustrate Proposition \ref{propvarpi} with the following example:
\begin{example}
Let $\tilde{M}^{1,4}$ be a pp-wave with metric 
\[
\tilde g=2 \dd v \dd u -\sum_{i=1}^3 (\dd x^i)^2-\frac{E^2}{6}\left(  \sum_{i=1}^3 (x^i)^2\right) \dd u^2\,,
\]
 let $(M^6,g)$ be a Riemannian Ricci-flat manifold and let $\epsilon$ be a closed and coclosed 3-form on $M^6$ with $\| \epsilon \|_g^2=-E^2$ constant. Then $(\X^{1,10} = \widetilde{M}^{1,4} \times M^6, h=\tilde g + g, \Flu=  \dd u \wedge \epsilon)$ is an eleven-dimensional bosonic supergravity background. 
\end{example}

\subsection{Results concerning the flux form $\Flu=  \varphi \tilde \alpha + \tilde \beta \wedge \nu + \tilde \gamma \wedge \delta+\tilde \varpi \wedge \epsilon$}  \label{largeexample}
In this section, we unify the previous four cases by considering a flux form $\Flu=  \varphi \tilde \alpha + \tilde \beta \wedge \nu + \tilde \gamma \wedge \delta+\tilde \varpi \wedge \epsilon$ where 
\[ 
\tilde \alpha= \dd u \wedge  \hat \alpha(u)\,, \quad \tilde \beta = \dd u \wedge \hat \beta(u)\,, \quad \tilde \gamma = \dd u \wedge \hat \gamma(u)\,, \quad \tilde \varpi = \dd u \wedge \hat \varpi(u)\,,
\] 
with $\hat \alpha(u) \in \Omega^3(N^3)$,  $\hat \beta(u) \in \Omega^2(N^3)$,  $\hat \gamma(u) \in \Omega^1(N^3)$, and $\hat \varpi(u) \in C^{\infty}(N^3)$, respectively.  Recall that  the  notation indicates that the differential forms on $N^3$ are parametrized by $u$. Since $\dim N^3=3$, we have $\hat \alpha(u)= f(u,x) \vol_\rho$ for some function $f \in C^{\infty}(\mathbb R \times N^3)$, where $\vol_\rho$ is the (in general $u$-dependent) volume form with respect to the metric $\rho$. 
In this case, the Maxwell equations  (Proposition \ref{MAX}) simplify significantly: all right-hand-sides vanish due to $\dd u \wedge \dd u = 0$, and we obtain the equations in Proposition \ref{MaxwellProduct}. 
It is easily seen that both the closedness condition and the Maxwell equation are satisfied  in the particular case that $\tilde \alpha, \tilde \beta, \tilde \gamma,\tilde \varpi$ and $\varphi, \nu, \delta,\epsilon$ are closed and coclosed on their respective manifolds. Let us remark that the closedness of $\varphi$ implies  its constancy, and we can without loss of generality assume that it is equal to $1$.  Finally, notice that if $\omega= \dd u \wedge \hat \omega(u)$ for some $\hat \omega(u) \in \Omega^k(N^3)$, then closedness of $\omega$ with respect to the exterior derivative on $\widetilde{M}^{1,4}$ is equivalent to closedness of $\hat \omega(u)$ with respect to the exterior derivative $\dd_N$ on $N^3$:
\[ 
\dd \omega = -\dd u \wedge \dd \hat \omega(u) = -\dd u \wedge \dd_N \hat \omega(u)\,.
\]
A similar statement can be made for coclosedness of $\omega$:   $\dd \star_5  \omega=0$ if and only if $\dd_N \star_\rho \hat \omega(u)=0$. 


\begin{prop}\label{gen1}
Consider the 4-form $\Flu= \dd u \wedge (\hat \alpha(u)+\hat \beta(u) \wedge \nu+\hat\gamma(u) \wedge \delta+\hat \varpi(u)  \epsilon)$, where 
\begin{gather*}
\hat \alpha(u) \in \Omega^3(N^3)\,, \;\; \hat \beta(u) \in \Omega^2(N^3)\,, \;\; \hat \gamma(u) \in \Omega^1(N^3)\,,  \;\; \hat \varpi(u) \in C^{\infty}(N^3)\,, \\  \nu \in \Omega^1(M^6)\,, \;\; \delta \in \Omega^2(M^6)\,, \;\; \epsilon \in \Omega^3(M^6)\,,
\end{gather*} are closed and coclosed on $N^3$ and $M^6$, respectively. Let also $(\widetilde{M}^{1,4},\tilde g)$ be a Walker metric of the form \eqref{gWalker} with $\rho$ Ricci-flat, $A=0$, and $\partial_v(H)=0$. Then,
\[ (\X^{1,10} = \widetilde{M}^{1,4} \times M^6, h=g +\tilde g, \Flu)\] 
is a bosonic supergravity background if and only if $(M^6,g)$ is Ricci-flat and 
\[\Delta_\rho H = \| \hat \alpha(u) \|_\rho^2+\| \hat \beta(u) \|_\rho^2 \| \nu \|_g^2+\|\hat \gamma(u)\|^2_\rho \| \delta \|^2_g + \hat \varpi(u)^2 \| \epsilon \|_g^2.\] 
\end{prop}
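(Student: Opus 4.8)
The plan is to read this as a direct unification of Propositions \ref{propalpha}, \ref{propbeta}, \ref{propgamma} and \ref{propvarpi}, reducing everything to Proposition \ref{GeneralNull}. First I would observe that all four Lorentzian factors $\tilde\alpha,\tilde\beta,\tilde\gamma,\tilde\varpi$ carry the common factor $\dd u$, which is null ($\langle\dd u,\dd u\rangle_{\tilde g}=0$); hence each of them is null and $\tilde\psi=0$, so the hypotheses of Proposition \ref{GeneralNull} are met. The closedness condition and the Maxwell equation then require no separate work: by the relation $\dd\omega=-\dd u\wedge\dd_N\hat\omega(u)$ and its coclosed analogue recorded just before the statement, closedness and coclosedness of $\hat\alpha,\hat\beta,\hat\gamma,\hat\varpi$ on $N^3$ and of $\nu,\delta,\epsilon$ on $M^6$ propagate to closedness and coclosedness of $\tilde\alpha,\tilde\beta,\tilde\gamma,\tilde\varpi$ on $\widetilde M^{1,4}$; together with the shared factor $\dd u$ this is exactly the situation of Proposition \ref{MaxwellProduct}, so both equations hold automatically (and $\varphi$ may be taken to be $1$). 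Thus the only genuine content is the supergravity Einstein equation, which by Proposition \ref{GeneralNull} is equivalent to $(M^6,g)$ being Ricci-flat together with the two remaining relations.

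Next I would extract the $\widetilde M$-component of the Einstein equation. By Lemma \ref{du2vanish}, for each $\tilde\omega=\dd u\wedge\hat\omega(u)$ one has $\langle\tilde X\lrcorner\tilde\omega,\tilde Y\lrcorner\tilde\omega\rangle_{\tilde g}=\dd u(\tilde X)\,\dd u(\tilde Y)\,\|\hat\omega(u)\|_\rho^2$, and likewise $\tilde\varpi(\tilde X)\tilde\varpi(\tilde Y)=\hat\varpi(u)^2\,\dd u(\tilde X)\dd u(\tilde Y)$; here I also use that the $\tilde g$-norm of a form pulled back from $N^3$ equals its $\rho$-norm. Summing the four contributions, the right-hand side of the $\Ric_h(\tilde X,\tilde Y)$ equation in Proposition \ref{GeneralNull} collapses to the multiple of $\dd u^2$ given by
\[
-\tfrac12\big(\|\hat\alpha(u)\|_\rho^2+\|\hat\beta(u)\|_\rho^2\|\nu\|_g^2+\|\hat\gamma(u)\|_\rho^2\|\delta\|_g^2+\hat\varpi(u)^2\|\epsilon\|_g^2\big)\,\dd u^2 .
\]
Comparing with the Ricci tensor \eqref{RicciWalker} of a Ricci-isotropic Walker metric, $\Ric_{\tilde g}=-\tfrac12\Delta_\rho(H)\,\dd u^2$, this single tensorial identity is equivalent to the scalar equation for $H$ displayed in the proposition.

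The step I expect to be the main obstacle is verifying that the mixed equation \eqref{VHRic} (the condition $\Ric_h(Z,\tilde Y)=0$, equivalently the last line of Proposition \ref{GeneralNull}) is satisfied automatically, since a priori it could couple $N^3$- and $M^6$-data and impose extra constraints, as it does in Theorem \ref{alpha-beta-nu}. The key observation is that each of its surviving cross-terms pairs, on the $\widetilde M$-side, a form containing the factor $\dd u$ against a form carrying no $\dd v$-component. Indeed, since $(\dd u)^\sharp=\partial_v$ and $\langle\dd u,\eta\rangle_{\tilde g}=0$ for every $1$-form $\eta$ on $\R\times N^3$, one has $\langle\dd u\wedge\Phi,\Xi\rangle_{\tilde g}=\langle\Phi,\partial_v\lrcorner\Xi\rangle_{\tilde g}=0$ whenever $\Xi$ has no $\dd v$. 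After factorising the product inner products via Lemma \ref{tools}, the three relevant $\tilde g$-pairings are $\langle\tilde\beta,\tilde Y\lrcorner\tilde\alpha\rangle_{\tilde g}$, $\langle\dd u\wedge\hat\gamma,\tilde Y\lrcorner\tilde\beta\rangle_{\tilde g}$ and $\langle\tilde\varpi,\tilde Y\lrcorner\tilde\gamma\rangle_{\tilde g}$; in each, the first slot contains $\dd u$ while the second is a contraction of a form pulled back from $\R\times N^3$ and hence has no $\dd v$. Therefore all three vanish, the mixed equation holds identically, and no constraint beyond Ricci-flatness of $M^6$ and the displayed equation for $H$ survives. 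Assembling these steps yields both implications of the stated equivalence.
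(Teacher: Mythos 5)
Your proof is correct and follows essentially the same route as the paper's: reduce to Proposition \ref{GeneralNull} via the common null factor $\dd u$, dispose of the closedness and Maxwell conditions through Proposition \ref{MaxwellProduct}, and extract the scalar equation for $H$ from Lemma \ref{du2vanish} and \eqref{RicciWalker}. The only difference is that you spell out why the mixed equation \eqref{VHRic} holds automatically (via $\langle \dd u\wedge\Phi,\Xi\rangle_{\tilde g}=0$ when $\Xi$ carries no $\dd v$), a step the paper asserts without detail; your justification is sound.
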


\begin{proof}
It is clear that the closedness condition in Lemma \ref{CLOS} is satisfied, and so is the Maxwell equation (Proposition \ref{MaxwellProduct}). Thus, the condition for being a bosonic supergravity background boils down to the supergravity Einstein equation, which by Proposition \ref{GeneralNull} consists of the following system of equations:
\[
\left\{\begin{split}
\Ric_{h}(\tilde{X},\tilde{Y})&=  -\frac{1}{2} \Big(\langle \tilde X\lrcorner \tilde{\alpha},\tilde Y\lrcorner \tilde{\alpha} \rangle_{\tilde{g}}+\langle \tilde{X}\lrcorner \tilde{\beta},\tilde{Y}\lrcorner \tilde{\beta}\rangle_{\tilde{g}} \| \nu \|^2_{ g}
+\langle \tilde{X}\lrcorner \tilde{\gamma},\tilde{Y}\lrcorner \tilde{\gamma}\rangle_{\tilde{g}} \| \delta \|^2_{ g}
+\tilde{\varpi}(\tilde{X})\tilde{\varpi}(\tilde{Y})\| \epsilon \|^2_{ g} \Big), \\
 0&= \varphi \nu(Z) \langle \tilde{\beta}, \tilde{Y}\lrcorner \tilde{\alpha} \rangle_{\tilde{g}}
-\langle \tilde{\gamma}\wedge (Z\lrcorner \delta), (\tilde{Y}\lrcorner\tilde{\beta})\wedge\nu\rangle_h
 +\langle \tilde{\varpi}\wedge(Z\lrcorner\epsilon), (\tilde{Y}\lrcorner\tilde{\gamma})\wedge\delta\rangle_h\,.
 \end{split}\right.
\]
The first equation reduces to 
\[ \Delta_\rho H = -2\Ric_h( \partial_u,\partial_u) =     \| \hat \alpha(u) \|_\rho^2+\| \hat \beta(u) \|_\rho^2 \| \nu \|_g^2+\|\hat \gamma(u)\|^2_\rho \| \delta \|^2_g + \hat \varpi(u)^2 \| \epsilon \|_g^2\] 
due to \eqref{RicciWalker} and Lemma \ref{du2vanish}, while the second one holds automatically. Thus we get our claim.

\end{proof}

Notice that since $\hat \alpha(u)=f(u,x) \vol_\rho$, we have $\dd_N \hat \alpha(u)=0$. We also see that $\| \hat \alpha(u) \|_\rho^2=- f^2$, and the condition $\dd_N \star_\rho \hat \alpha(u)=0$ is equivalent to condition $\partial_{x^i}(f)=0$ for $i=1,2,3$, which we recognize from Proposition \ref{propalpha}. Notice that closedness of $\hat \varpi(u)$ on $N^3$ means that $\hat \varpi(u)$ is a function of $u$ only.  
 The propositions \ref{propalpha}, \ref{propbeta}, \ref{propgamma}, \ref{propvarpi} can now be viewed as corollaries of Proposition \ref{gen1}. Moreover, their corresponding examples are special cases of the following more general example.

\begin{example} \label{examplegeneral}
Let $(M^6,g)$ be a Ricci-flat Riemannian manifold and assume that there exist differential forms $\nu \in \Omega^1(M^6), \delta \in \Omega^2(M^6), \epsilon \in \Omega^3(M^6)$, which are closed and coclosed, satisfying $\|\nu\|_g^2=-1$, $\| \delta \|_g^2 =1$, and $\| \epsilon \|_g^2 =-1$, respectively.  Let $\widetilde{M}^{1,4}$ be a five-dimensional pp-wave with metric 
\[\tilde g= 2 \dd u \dd v - \sum_{i=1}^3 (\dd x^i)^2 + \frac{f_1(u)^2-f_2(u)^2+f_3(u)^2-f_4(u)^2}{6} \left(  \sum_{i=1}^3 (x^i)^2\right) \dd u^2,\] 
and set $\hat \alpha(u) = f_1(u) \vol_\rho, \hat \beta(u)=f_2(u)\dd x^1 \wedge \dd x^2, \hat \gamma(u) = f_3(u) \dd x^1, \hat \varpi(u) = f_4(u)$. If 
\begin{align*}
\Flu &= \dd u \wedge (\hat \alpha(u)+\hat \beta(u) \wedge \nu+\hat\gamma(u) \wedge \delta+\hat \varpi(u) \epsilon)  \\
&=\dd u \wedge (f_1(u) \dd x^1 \wedge \dd x^2\wedge \dd x^3 +f_2(u)\dd x^1 \wedge \dd x^2 \wedge \nu+ f_3(u) \dd x^1  \wedge \delta+f_4(u) \epsilon)
\end{align*}
then   $(\X^{1,10} = \widetilde{M}^{1,4} \times M^6, h=\tilde g + g, \Flu)$ is an eleven-dimensional bosonic supergravity background. Notice that when all but one of the functions $f_i$ vanish, then the example reduces to ones previously considered. 
We can specify the example even further by considering $M^6=\mathbb R^3 \times \Sigma$ with metric $g=-(\dd y^1)^2 - (\dd y^2)^2-(\dd y^3)^2-\nu$, where $\nu$ is a Ricci-flat positive definite metric on a three-dimensional manifold $\Sigma$, and $\nu= \dd y^3$, $\delta = \dd y^2 \wedge \dd y^3$, $\epsilon = \dd y^1 \wedge \dd y^2 \wedge \dd y^3$. 
\end{example}

%
%
%

\subsection{Results concerning the flux form $\Flu= \varphi \tilde{\alpha}+\tilde{\beta}\wedge\nu$} \label{sectalphabeta}
Now, set $\tilde \alpha=\dd u \wedge f(u,x) \vol_\rho$ and $\tilde \beta= \dd u \wedge \omega(u)$, where $f \in C^{\infty}(\mathbb R\times N^3)$ is a function,  $\omega(u) \in \Omega^2(N^3)$ is a 2-form depending smoothly on the parameter $u$, and $f, \omega(u),\nu,\varphi$ are nonzero.  Then we get the following statement regarding bosonic supergravity backgrounds with flux form $\Flu= \varphi \tilde{\alpha}+\tilde{\beta}\wedge\nu$.

\begin{prop}\label{gen2}
Let $(M^6,g)$ be a Riemannian Ricci-flat manifold, $\varphi$ a function on $M^6$ satisfying $\star_6 \dd \star_6 \dd \varphi = \kappa \lambda \varphi$ and set $\nu=\frac{1}{\kappa} \dd \varphi$, for a nonzero constant $\kappa$. Let also $(\widetilde{M}^{1,4}=\mathbb{R}\times N^3\times \mathbb{R},\tilde{g}=2\dd v\dd u+\rho+H\dd u^2)$ be a Walker manifold with $\rho$ Ricci-flat and $\partial_v(H)=0$.  Set as above $\tilde{\alpha}=\dd u \wedge f \vol_\rho$ and $\tilde \beta=\dd u \wedge \omega(u)$ with $\omega(u)=-\frac{1}{\lambda} \star_\rho \dd_N f$, for a nonzero constant $\lambda$, and assume that $f$ satisfies $\star_\rho \dd_N \star_\rho \dd_N f=-\kappa \lambda f$. Then $(\X^{1,10}=\widetilde{M}^{1,4}\times M^6,h=\tilde{g}+g,\Flu=\varphi \tilde{\alpha}+\tilde{\beta}\wedge\nu)$ is a bosonic supergravity background if and only if 
$$
\Delta_\rho H=\|\omega(u)\|_{\rho}^2 \| \nu \|_g^2 - f^2 \varphi^2 .
$$
\end{prop}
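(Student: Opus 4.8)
The plan is to recognize this as a special instance of Theorem \ref{alpha-beta-nu} and to verify its hypotheses one by one against the explicit data of the proposition, so that the whole statement collapses to the single scalar equation for $H$.

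First I would check that $\tilde\alpha = \dd u \wedge f\vol_\rho$ and $\tilde\beta = \dd u \wedge \omega(u)$ are null. Since $\langle \dd u, \dd u\rangle_{\tilde g}=0$ and $\langle \dd u, \dd x^i\rangle_{\tilde g}=0$, any form carrying the factor $\dd u$ and otherwise built from $N^3$-forms pairs to zero against itself; this is exactly the mechanism isolated in the proof of Lemma \ref{du2vanish}. Hence Theorem \ref{alpha-beta-nu} applies, and $(\X, h, \Flu)$ is a background if and only if $M^6$ is Ricci-flat (assumed), the listed closedness and Maxwell relations hold, the orthogonality $\langle \tilde\beta, \tilde X\lrcorner \tilde\alpha\rangle_{\tilde g}=0$ holds, and the stated Ricci identity holds.

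Next I would dispatch the closedness and Maxwell relations. The conditions $\dd\nu=0$ and $\dd\varphi=\kappa\nu$ are immediate from $\nu=\frac1\kappa\dd\varphi$, and $\dd\star_6\nu=\lambda\star_6\varphi$ follows by applying $\star_6^{-1}$ to the hypothesis $\star_6\dd\star_6\dd\varphi=\kappa\lambda\varphi$. Closedness $\dd\tilde\alpha=0$ holds because $f\vol_\rho$ is top-degree on $N^3$, so $\dd u\wedge\dd_N(f\vol_\rho)=0$. For the remaining three relations I would compute the Hodge stars explicitly: completing the square $2\dd u\,\dd v+H\dd u^2=2\dd u(\dd v+\tfrac H2\dd u)$ exhibits $\dd u$ as a null leg of a hyperbolic pair, whence $\star_5(\dd u\wedge\mu)=\pm\dd u\wedge\star_\rho\mu$ for any $\mu\in\Omega^\bullet(N^3)$, the sign arising from the two-dimensional Lorentzian factor as in the proof of Proposition \ref{propbeta}. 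Combining this with $\omega(u)=-\frac1\lambda\star_\rho\dd_N f$ and the hypothesis $\star_\rho\dd_N\star_\rho\dd_N f=-\kappa\lambda f$ gives $\dd_N\star_\rho\dd_N f=\pm\kappa\lambda f\vol_\rho$, which yields $\dd\tilde\beta=-\kappa\tilde\alpha$; moreover $\star_\rho\omega(u)\propto\dd_N f$ forces $\dd\star_5\tilde\beta=0$, while $\star_5\tilde\alpha\propto f\,\dd u$ gives $\dd\star_5\tilde\alpha=-\lambda\star_5\tilde\beta$ after matching constants. The orthogonality $\langle \tilde\beta, \tilde X\lrcorner \tilde\alpha\rangle_{\tilde g}=0$ then follows from the same nullity mechanism, since every surviving term either lacks the $\dd u$ partner or carries $\dd u$ on both sides.

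Finally I would compute the Ricci identity. By Lemma \ref{du2vanish}, $\langle \tilde X\lrcorner \tilde\alpha, \tilde Y\lrcorner \tilde\alpha\rangle_{\tilde g}=-f^2(\tilde X\lrcorner \dd u)(\tilde Y\lrcorner \dd u)$ and $\langle \tilde X\lrcorner \tilde\beta, \tilde Y\lrcorner \tilde\beta\rangle_{\tilde g}=\|\omega(u)\|_\rho^2(\tilde X\lrcorner \dd u)(\tilde Y\lrcorner \dd u)$, the sign in the first coming from $\langle \vol_\rho,\vol_\rho\rangle_\rho=-1$ in signature $(-,-,-)$. Substituting into the Ricci equation of Theorem \ref{alpha-beta-nu} collapses its right-hand side to $-\tfrac12\bigl(\|\omega(u)\|_\rho^2\|\nu\|_g^2-f^2\varphi^2\bigr)\dd u^2$, and comparison with the Walker Ricci formula \eqref{RicciWalker}, $\Ric_{\tilde g}=-\tfrac12\Delta_\rho(H)\,\dd u^2$, gives exactly $\Delta_\rho H=\|\omega(u)\|_\rho^2\|\nu\|_g^2-f^2\varphi^2$. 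The main obstacle is the bookkeeping of Hodge-star signs in the middle step, that is, tracking $\star_\rho^2$ and $\star_5$ on the null coframe under the mostly-minus convention, since it is precisely there that the constants $\kappa,\lambda$ and the defining formulas for $\nu$ and $\omega(u)$ must conspire to reproduce the relations of Theorem \ref{alpha-beta-nu}.
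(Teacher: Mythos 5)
Your proposal is correct and follows essentially the same route as the paper's proof: reduce to Theorem \ref{alpha-beta-nu}, verify the closedness/Maxwell relations from the definitions of $\nu$ and $\omega(u)$ together with the two differential equations on $\varphi$ and $f$, and then use Lemma \ref{du2vanish} with the Walker Ricci formula \eqref{RicciWalker} to collapse the Einstein equation to the scalar condition on $\Delta_\rho H$. You in fact spell out the Hodge-star sign bookkeeping in more detail than the paper, which simply asserts that the listed relations hold.
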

\begin{proof}
The proof is mainly based on Theorem \ref{alpha-beta-nu}. The Maxwell equation and  the closedness condition reduce to 
\begin{gather*} \dd_N \star_\rho \omega(u) =0,\;\;  \dd_N\omega(u)= \kappa f \vol_\rho, \;\;   \dd_N  f  = \lambda \star_\rho \omega(u), \;\;  \dd\nu=0, \;\;  \dd\varphi = \kappa \nu, \;\;  \dd \star_6 \nu=\lambda \star_6 \varphi.
\end{gather*} 
These equations are satisfied due to the definitions of $\nu$ and $\omega(u)$ and the two differential equations
\[\star_6 \dd \star_6 \dd \varphi = \kappa \lambda \varphi\,, \qquad \star_\rho \dd_N \star_\rho \dd_N f=-\kappa \lambda f \] constraining them. Next, the supergravity Einstein equation consists of the following system of equations:
\[
\left\{\begin{split}
\Ric_{h}(\tilde{X},\tilde{Y})&=-\frac{1}{2}\langle \tilde X\lrcorner \tilde{\alpha},\tilde Y\lrcorner \tilde{\alpha} \rangle_{\tilde{g}} \varphi^2-\frac{1}{2}\langle \tilde{X}\lrcorner \tilde{\beta},\tilde{Y}\lrcorner \tilde{\beta}\rangle_{\tilde{g}} \| \nu \|^2_{ g}\,, \quad \forall \ \tilde{X}, \tilde{Y} \in \Gamma(T\widetilde{M})\,,\\
0 &= \langle \tilde{\beta}, \tilde{X}\lrcorner \tilde{\alpha} \rangle_{\tilde{g}}\,, \quad \forall \ \tilde{X} \in \Gamma(T\widetilde{M})\,.
\end{split}\right.
\]
By   Lemma \ref{du2vanish} and by assuming that  $(\widetilde{M}^{1,4}, \tilde g)$ is of the form \eqref{gWalker} with $\rho$   Ricci-flat and $\partial_v(H)=0$, we see that the only nonzero component of the Ricci tensor has the form 
\[\Ric_h(\partial_u,\partial_u) = -\frac{1}{2} \left(f^2 \varphi^2 \| \vol_\rho\|^2_{\tilde g} +\| \omega(u)\|^2_{\tilde g} \| \nu\|_g^2 \right)= \frac{1}{2} \left(f^2 \varphi^2 -\| \omega(u)\|^2_{\tilde g} \| \nu\|_g^2 \right)\,,
\] 
while  for $\tilde \alpha$ and $\tilde \beta$ of the chosen form the equation $ \langle \tilde{\beta}, \tilde{X}\lrcorner \tilde{\alpha} \rangle_{\tilde{g}}=0$ holds automatically. 
 But then, by  \eqref{RicciWalker},  it turns out that the above relation is equivalent to the condition $\Delta_\rho H=\|\omega(u)\|_{\tilde g}^2 \| \nu \|_g^2 - f^2 \varphi^2$, which proves our claim.
\end{proof}
\begin{remark}
By assumption the function $f^2$ is nonzero, and $\|\omega(u)\|_{\tilde g}^2 =\|\omega(u)\|_{\rho}^2>0$, since $\rho$ is positive definite on 2-forms.  By Lemma \ref{sumofproducts} we see that either $\varphi$ and $\|\nu\|_g^2$ are both constant, or $f^2=C \|\omega(u)\|_{\tilde g}^2 $ for some nonzero constant $C$, which can only be positive. In the latter case we obtain 
\[
\|\nu\|_g^2 = C \varphi^2+D\,,
\]
for some constant $D$. In this equation the left-hand-side is negative. For this reason we  look for an example involving trigonometric functions. 
\end{remark}

\begin{example}\label{examplealphabeta}
Let $(M^6=\Ss^1\times \mathbb{R}^5, g)$ be a flat Riemannian manifold with coordinates $y^1,\dots,y^6$ and metric $g=-(\dd y^1)^2 - \sum_{i=2}^{6} (\dd y^i)^2$. Set 
\[
\varphi=\sin(y^1)\,,\quad \nu = \frac{1}{\kappa} \dd \varphi=\frac{1}{\kappa} \cos(y^1) \dd y^1\,.
\]
 Consider also a pp-wave $(\widetilde{M}^{1,4},\tilde g)$ with metric  $ \tilde g=2 \dd u \dd v -\sum_{i=1}^3 (\dd x^i)^2 + H(u,x) \dd u^2$. 
  Set  $f=\exp(x^1)$  and $\omega=\kappa \star_\rho \dd f= -\kappa \exp(x^1) \dd x^2 \wedge \dd x^3$. In terms of the notation in Proposition \ref{gen2}, we have  $\lambda=-1/\kappa$. 
Now, the supergravity Einstein condition gives
\begin{align*}
\Delta_\rho H &= \|\omega\|_{\tilde g}^2 \| \nu \|_g^2 - f^2 \varphi^2=  -\exp(2x^1)\,.
\end{align*}
This equation is satisfied if, for example, $H=\frac{1}{4} \exp(2x^1)$. Thus  $(\X^{1,10}=\widetilde{M}^{1,4} \times M^6, h=g+\tilde g, \Flu=\varphi \tilde \alpha + \tilde \beta \wedge \nu)$ is a bosonic supergravity background with these choices of $\varphi,\nu, f,\omega,H$, and then   the flux form $\Flu$  reads as
  $
 \Flu= \exp(x^1) \dd u \wedge \dd x^2 \wedge \dd x^3  \wedge \left(\sin(y^1)  \dd x^1 + \cos(y^1)\dd y^1\right).
  $
\end{example}

Let us now consider the case where $\dd\varphi=0$  (and $\kappa = 0$). By absorbing the constant into $\tilde \alpha$, we can without loss of generality assume that $\varphi= 1$.  As long as $\lambda \neq 0$, the 2-form $\omega(u)$ is determined by $f$ via the relation $\omega(u)=-\frac{1}{\lambda} \star_\rho \dd_N f$. This implies at once $\dd_N \star_\rho \omega(u)=0$. The equation $\dd_N \omega(u) =0$ is then equivalent to 
$\dd_N \star_\rho \dd_N f= 0$, or $\star_\rho \dd_N \star_\rho \dd_N f=0$. With this simplification we obtain the following statement. 

\begin{prop}\label{gen3}
Let $(M^6,g)$ be a Riemannian Ricci-flat manifold endowed with a   closed $1$-form $\nu\in\Omega^{1}(M)$ 
satisfying $\star_6 \dd \star_6 \nu=\lambda$, for some constant $\lambda \neq 0$. Let also $(\widetilde{M}^{1,4}=\mathbb{R}\times N^3\times \mathbb{R},\tilde{g}=2\dd v\dd u+\rho+H\dd u^2)$ be a Walker manifold with $\rho$ Ricci-flat and $\partial_v(H)=0$, and assume that   $f$ is a smooth function on $\mathbb R \times N^3$, such that 
\[
\star_\rho \dd_N \star_\rho \dd_N f=0\,.
\]
Set $\tilde{\alpha}=\dd u \wedge f \vol_\rho,\tilde \beta=\dd u \wedge \omega(u)$, with $\omega(u)=-\frac{1}{\lambda} \star_\rho \dd_N f$. Then $(\X^{1,10}=\widetilde{M}^{1,4}\times M^6,h=\tilde{g}+g,\Flu=\tilde{\alpha}+\tilde{\beta}\wedge\nu)$ is a bosonic supergravity background if and only if 
$$
\Delta_\rho H=\|\omega(u)\|_{\rho}^2 \| \nu \|_g^2 - f^2 .
$$
Consequently, $\|\nu\|_{ g}^2$ must be constant for such a bosonic supergravity background.
\end{prop}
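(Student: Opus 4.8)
The plan is to recognize this as the specialization $\kappa = 0$, $\varphi \equiv 1$ of Proposition \ref{gen2}, and to run the same argument through Theorem \ref{alpha-beta-nu}, simplified accordingly. First I would dispatch the closedness condition and the Maxwell equation. Setting $\kappa = 0$ in the reduction carried out in the proof of Proposition \ref{gen2}, these two conditions collapse to the $N^3$- and $M^6$-level identities $\dd_N\omega(u) = 0$, $\dd_N\star_\rho\omega(u) = 0$, $\dd_N f = \lambda\star_\rho\omega(u)$, together with $\dd\nu = 0$ and $\dd\star_6\nu = \lambda\vol_M$. I would then check that each of these follows from the hypotheses: the relation $\dd_N f = \lambda\star_\rho\omega(u)$ is immediate from the definition $\omega(u) = -\tfrac1\lambda\star_\rho\dd_N f$ together with $\star_\rho\star_\rho = -\mathrm{Id}$ on $1$-forms of the negative-definite $3$-manifold $(N^3, \rho)$; consequently $\star_\rho\omega(u) = \tfrac1\lambda\dd_N f$ gives $\dd_N\star_\rho\omega(u) = 0$ at once; the condition $\dd_N\omega(u) = 0$ amounts to $\dd_N\star_\rho\dd_N f = 0$, which follows from the hypothesis $\star_\rho\dd_N\star_\rho\dd_N f = 0$ since $\star_\rho$ is an isomorphism on $\Omega^3(N^3)$; closedness of $\nu$ is assumed; and $\dd\star_6\nu = \lambda\vol_M$ is exactly the hypothesis $\star_6\dd\star_6\nu = \lambda$, using $\star_6\vol_M = 1$ on the $6$-dimensional $(M^6, g)$.

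With these settled, it remains to treat the supergravity Einstein equation via Theorem \ref{alpha-beta-nu}, whose nullity hypotheses on $\tilde\alpha$ and $\tilde\beta$ hold automatically since both contain the null factor $\dd u$. The cross-term constraint $\langle\tilde\beta, \tilde X\lrcorner\tilde\alpha\rangle_{\tilde g} = 0$ is automatic: $\tilde\beta = \dd u\wedge\omega(u)$, while $\tilde X\lrcorner\tilde\alpha$ is a combination of $\vol_\rho$ and $\dd u\wedge(\cdots)$, hence a form on $\mathbb{R}\times N^3$, and the pairing of $\dd u\wedge(\mathrm{form})$ against such a form vanishes by the computation used in the proof of Lemma \ref{du2vanish} (exploiting $\langle\dd u, \dd u\rangle_{\tilde g} = \langle\dd u, \dd x^i\rangle_{\tilde g} = 0$). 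For the Ricci identity I would apply Lemma \ref{du2vanish} to $\tilde\alpha$ and $\tilde\beta$ separately, so that only the $\partial_u\partial_u$ component survives; inserting $\|\vol_\rho\|_\rho^2 = -1$ and $\|\omega(u)\|_\rho^2 > 0$ (positive because $\rho$ is positive definite on $2$-forms) yields $\Ric_h(\partial_u, \partial_u) = \tfrac12 f^2 - \tfrac12\|\omega(u)\|_\rho^2\|\nu\|_g^2$. Comparing with \eqref{RicciWalker}, which gives $\Ric_{\tilde g} = -\tfrac12\Delta_\rho(H)\,\dd u^2$, produces the stated equation $\Delta_\rho H = \|\omega(u)\|_\rho^2\|\nu\|_g^2 - f^2$.

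For the concluding assertion I would rewrite the background equation as $\Delta_\rho H + f^2 = \|\omega(u)\|_\rho^2\,\|\nu\|_g^2$, whose left-hand side is a function of the $\widetilde M^{1,4}$-variables alone while the right-hand side is a single product $f_1(u, x)\,g_1(y)$ with $f_1 = \|\omega(u)\|_\rho^2$ and $g_1 = \|\nu\|_g^2$ a function on $M^6$. The first part of Lemma \ref{sumofproducts} then forces either $\|\omega(u)\|_\rho^2 \equiv 0$ or $\|\nu\|_g^2$ constant; since $\omega(u)$ is nonzero (so that $\Flu$ genuinely retains the term $\tilde\beta\wedge\nu$), we conclude that $\|\nu\|_g^2$ is constant.

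I expect the only genuine friction to be sign bookkeeping rather than anything conceptual: keeping the iterated Hodge stars consistent in the ``mostly minus'' convention, in particular confirming $\star_\rho\star_\rho = -\mathrm{Id}$ on $1$-forms over $(N^3, \rho)$ so that the definition $\omega(u) = -\tfrac1\lambda\star_\rho\dd_N f$ is compatible with the Maxwell identity $\dd_N f = \lambda\star_\rho\omega(u)$, and matching $\star_6\dd\star_6\nu = \lambda$ with $\dd\star_6\nu = \lambda\vol_M$. All remaining steps reduce to the already-established results Theorem \ref{alpha-beta-nu}, Lemma \ref{du2vanish} and Lemma \ref{sumofproducts}.
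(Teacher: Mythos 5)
Your proposal is correct and follows essentially the same route as the paper, which presents Proposition \ref{gen3} as the $\kappa=0$, $\varphi\equiv 1$ specialization of Proposition \ref{gen2}: the Maxwell/closedness conditions reduce to the stated $N^3$- and $M^6$-level identities, the Einstein equation is handled via Theorem \ref{alpha-beta-nu} and Lemma \ref{du2vanish}, and the constancy of $\|\nu\|_g^2$ follows from Lemma \ref{sumofproducts}. Your sign bookkeeping ($\star_\rho\star_\rho=-\mathrm{Id}$ on $1$-forms of $(N^3,\rho)$, $\|\vol_\rho\|^2_\rho=-1$, $\star_6\vol_M=1$) is consistent with the paper's conventions.
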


Recall that by  Proposition \ref{compact}, the six-dimensional Riemannian manifold $M^6$ appearing in Proposition \ref{gen3}  must be  non-closed. Notice that examples \ref{examplegeneral} and \ref{examplealphabeta} can easily be modified to make $M^6$ a compact manifold, for example a flat six-dimensional torus. This is {\it not} the case for the following example.

\begin{example} \label{example3}
Consider the Riemannian manifold $M^6=(-L,L) \times \mathbb R^5$ with metric $g=-(\dd y^1)^2 - \sum_{i=2}^6 (\dd y^i)^2$. We set $\lambda=1$ and $\nu=-y^1 \dd y^1+ \sqrt{L^2-(y^1)^2} \dd y^2$, so that $\| \nu \|_g^2=-L^2$. Let $(\widetilde{M}^{1,4},\tilde g)$ be a pp-wave, that is  $\tilde g=2 \dd u \dd v -\sum_{i=1}^3 (\dd x^i)^2 + H(u,x) \dd u^2$.
If we set $f=x^1$ and $\omega=-\star_\rho \dd_N f=\dd x^2 \wedge \dd x^3$, then the Einstein equation is given by
\[\Delta_\rho H=\| \omega\|_\rho^2 \| \nu \|_g^2-f = -L^2-(x^1)^2.\] 
This equation is satisfied when, for example, $H=\frac{1}{12} (x^1)^4+\frac{L^2}{2} (x^1)^2$.  Thus $(\X^{1,10}=\widetilde{M}^{1,4} \times M^6,h=\tilde g+g, \Flu= \tilde \alpha + \tilde \beta \wedge \nu)$ is a bosonic supergravity background with these  choices of $\nu,f,\omega,H$. In this case, the flux form is given by $\Flu = \dd u \wedge \dd x^2 \wedge \dd x^3 \wedge (x^1 \dd x^1 - y^1 \dd y^1+\sqrt{L^2-(y^1)^2} \dd y^2).$
\end{example}

\section{Bosonic backgrounds involving K\"ahler manifolds and non-null flux forms} \label{other}
In this section we treat flux forms of type $\Flu=\tilde \gamma \wedge \delta$ and $\Flu=\theta$ for the case when $(M^6,g)$ is a K\"ahler manifold with K\"ahler form $\omega$. We will assume that the part of $\Flu$ taking values in $TM^6$ is given by $\omega$. More precisely, we consider the cases for which $\delta=\omega$ and $\theta=c \star_6 \omega$ for some nonzero constant $c \in \mathbb R$.

\subsection{Results concerning  $\Flu=\tilde{\gamma}\wedge\delta$}
Inspired by Corollary \ref{constantlength}, we consider bosonic supergravity backgrounds with $\Flu=\tilde{\gamma} \wedge \delta$ where $\| \tilde \gamma \|_{\tilde g}^2$ is not  (necessarily) constant, and hence not null. The following theorem follows directly from the bosonic supergravity equations (Proposition \ref{casesMC} and Proposition \ref{spck}). 

\begin{theorem}
Consider the Lorentzian manifold $\X^{1,10}=\wi{M}^{1,4} \times M^6$ with metric $h = \tilde g + g$ and a 4-form $\Flu=\tilde{\gamma} \wedge \delta$. Then $(\X^{1,10},h,\Flu)$ is a bosonic supergravity background if and only if $\delta$ is closed and coclosed, $\tilde \gamma$ is closed and satisfies 
\[ \dd \star_5 \tilde \gamma \wedge \star_6 \delta = \frac{\tilde \gamma \wedge \tilde \gamma \wedge \delta \wedge \delta}{2}\,,
\] 
and the following equations hold: 
\begin{equation}\label{Ric6}
\begin{split}
\Ric_{h}(X,Y)&=\left(\frac{\|\delta\|^2_g}{6}g(X,Y)-\frac{1}{2}\langle X\lrcorner \delta, Y\lrcorner \delta\rangle_g\right) \| \tilde{\gamma} \|^2_{ \tilde{g}}\,,\\
\Ric_{h}(\tilde{X},\tilde{Y})&=\left(\frac{\|\tilde{\gamma}\|^2_{\tilde{g}}}{6}\tilde{g}(\tilde{X},\tilde{Y})-\frac{1}{2}\langle \tilde{X}\lrcorner \tilde{\gamma},\tilde{Y}\lrcorner \tilde{\gamma}\rangle_{\tilde{g}}\right) \| \delta \|^2_{ g}\,.
\end{split}
\end{equation}
\end{theorem}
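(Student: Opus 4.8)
The plan is to recognize that this statement is nothing more than the conjunction of the results already established for the flux form $\Flu = \tilde{\gamma} \wedge \delta$ in the closedness/Maxwell analysis (Proposition \ref{casesMC}) and in the Einstein analysis (Proposition \ref{spck}). Recall that, by the definition encoded in \eqref{MainEq}, a triple $(\X^{1,10}, h, \Flu)$ is a bosonic supergravity background precisely when $\Flu$ satisfies all three field equations: the closedness condition, the Maxwell equation, and the supergravity Einstein equation. Accordingly, I would split the argument into two independent parts, treating (closedness $+$ Maxwell) and (Einstein) separately, and then intersect the resulting conditions.

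First I would invoke Proposition \ref{casesMC}(3), which asserts that for $\Flu = \tilde{\gamma} \wedge \delta$ the closedness condition together with the Maxwell equation is equivalent to
\[ \dd\tilde{\gamma} = \dd\delta = \dd\star_6\delta = 0, \qquad \dd\star_5\tilde{\gamma}\wedge\star_6\delta = \frac{\tilde{\gamma}\wedge\tilde{\gamma}\wedge\delta\wedge\delta}{2}. \]
Rephrasing the differential conditions geometrically, these say exactly that $\delta$ is closed and coclosed, that $\tilde{\gamma}$ is closed, and that $\tilde{\gamma}$ satisfies the displayed wedge identity. This reproduces verbatim the first block of conditions in the theorem.

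Second, I would invoke Proposition \ref{spck}(3), which states that for the same flux form the supergravity Einstein equation \eqref{EINsugra} is equivalent to the pair of tensorial identities \eqref{Ricgamma}; these are literally the equations \eqref{Ric6} appearing in the present statement. Combining the two invocations then yields the claimed equivalence, since $(\X^{1,10}, h, \Flu)$ is a bosonic supergravity background if and only if both the (closedness $+$ Maxwell) system and the Einstein system hold simultaneously.

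Since the two component propositions are already proved, there is no genuine analytic obstacle here; the statement is essentially a packaging of earlier results. The only point deserving a little care is the translation between the analytic formulation ($\dd\delta = 0$, $\dd\star_6\delta = 0$, $\dd\tilde{\gamma} = 0$) used in Proposition \ref{casesMC} and the verbal phrasing (``closed and coclosed'') adopted in the statement, together with the observation that the remaining wedge identity and the two Ricci equations are carried over unchanged.
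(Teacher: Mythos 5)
Your proposal is correct and coincides with the paper's own justification: the paper introduces this theorem with the remark that it ``follows directly from the bosonic supergravity equations (Proposition \ref{casesMC} and Proposition \ref{spck})'', i.e.\ precisely the conjunction of Proposition \ref{casesMC}(3) and Proposition \ref{spck}(3) that you invoke. Nothing further is needed.
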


Now, Corollary \ref{constantlength} tells us that if $\|\tilde \gamma \|_{\tilde g}^2$ is not constant, then $M^6$ is a Ricci-flat almost Hermitian manifold. If we let $M^6$ be a K\"ahler manifold with K\"ahler form $\delta$, we get the following statement. 

\begin{prop} \label{prop61}
Let $(M^6,g,\delta)$ be a K\"ahler manifold and  let  $(\widetilde{M}^{1,4}, \tilde g)$ be a Lorentzian manifold. Assume also that $\tilde \gamma$ is a closed  $2$-form on $\widetilde{M}^{1,4}$. Then $(\X^{1,10}=\widetilde{M}^{1,4} \times M^6,h=g+\tilde g, \Flu=\tilde{\gamma} \wedge \delta)$ is a bosonic supergravity background if and only if $(M^6,g)$ is Ricci-flat, 
\[\dd \star_5 \tilde \gamma=\tilde \gamma \wedge \tilde \gamma\] 
and 
\begin{equation} \label{Ric7}
\Ric_{h}(\tilde{X},\tilde{Y})=\left(\frac{\|\tilde{\gamma}\|^2_{\tilde{g}}}{2}\tilde{g}(\tilde{X},\tilde{Y})-\frac{3}{2}\langle \tilde{X}\lrcorner \tilde{\gamma},\tilde{Y}\lrcorner \tilde{\gamma}\rangle_{\tilde{g}}\right) .
\end{equation}
\end{prop}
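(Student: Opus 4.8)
The plan is to specialize the characterization of the theorem immediately preceding this statement (equivalently, case (3) of Proposition \ref{casesMC} together with case (3) of Proposition \ref{spck}) to the situation in which $\delta$ is the K\"ahler form of $(M^6,g)$, and then to exploit three standard pointwise identities satisfied by the K\"ahler form of a six-dimensional K\"ahler manifold (complex dimension three):
\begin{equation*}
\|\delta\|_g^2 = 3\,, \qquad \langle X\lrcorner\delta, Y\lrcorner\delta\rangle_g = g(X,Y)\,, \qquad \star_6\delta = \tfrac{1}{2}\,\delta\wedge\delta\,,
\end{equation*}
valid for all $X,Y\in\Gamma(TM^6)$. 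The first two follow from the compatibility relation $\delta(X,Y)=g(JX,Y)$ and the fact that $J$ is a $g$-isometry: indeed $X\lrcorner\delta = (JX)^{\flat}$ gives $\langle X\lrcorner\delta,Y\lrcorner\delta\rangle_g = g(JX,JY)=g(X,Y)$, and contracting this over an orthonormal frame yields $\|\delta\|_g^2=3$. The third is the classical Hodge identity $\star\omega = \omega^{n-1}/(n-1)!$ for $n=3$.

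First I would record that the hypotheses of the characterization hold automatically here: $\delta$ is closed because it is a K\"ahler form, and it is coclosed because $\dd\star_6\delta = \tfrac{1}{2}\dd(\delta\wedge\delta)=\delta\wedge\dd\delta = 0$; while $\tilde\gamma$ is closed by assumption. Hence $(\X^{1,10},h,\Flu)$ is a bosonic supergravity background if and only if the Maxwell constraint $\dd\star_5\tilde\gamma\wedge\star_6\delta = \tfrac{1}{2}\tilde\gamma\wedge\tilde\gamma\wedge\delta\wedge\delta$ and the two Einstein equations \eqref{Ric6} hold.

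Next I would substitute the three identities. In the first (horizontal) equation of \eqref{Ric6} the bracket collapses to $\big(\tfrac{3}{6}-\tfrac12\big)g(X,Y)\|\tilde\gamma\|_{\tilde g}^2 = 0$, so since $\Ric_h(X,Y)=\Ric_g(X,Y)$ for $X,Y\in\Gamma(TM^6)$ that equation is equivalent to $\Ric_g\equiv 0$, i.e. to $(M^6,g)$ being Ricci-flat; this matches what Corollary \ref{constantlength}(3) already predicted. In the second (vertical) equation the common factor $\|\delta\|_g^2=3$ turns it precisely into \eqref{Ric7}. For the Maxwell constraint I would replace $\tfrac12\delta\wedge\delta$ by $\star_6\delta$ and rewrite it as $(\dd\star_5\tilde\gamma - \tilde\gamma\wedge\tilde\gamma)\wedge\star_6\delta = 0$; since $\dd\star_5\tilde\gamma-\tilde\gamma\wedge\tilde\gamma$ is a $4$-form on $\widetilde M^{1,4}$ and $\star_6\delta$ is a nowhere-vanishing $4$-form on $M^6$, a local-frame argument (the exterior products of a horizontal $4$-covector with a fixed nonzero vertical $4$-covector are linearly independent) forces $\dd\star_5\tilde\gamma = \tilde\gamma\wedge\tilde\gamma$. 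Collecting the three equivalences yields the statement.

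The main obstacle is the careful bookkeeping of signs in the three K\"ahler identities under the ``mostly minus'' convention, in which $g$ is negative definite. Both $\langle X\lrcorner\delta,Y\lrcorner\delta\rangle_g$ and $\|\delta\|_g^2$ are quadratic in $\delta$ and hence insensitive to the orientation of $M^6$ and to the sign chosen for the K\"ahler form; but the Hodge-star identity $\star_6\delta = \tfrac12\delta\wedge\delta$ is linear on the left and quadratic on the right, so its sign depends on fixing the orientation of $M^6$ and the sign convention for $\delta$ consistently. This is exactly the choice that produces the $+$ sign in $\dd\star_5\tilde\gamma = \tilde\gamma\wedge\tilde\gamma$, and I would verify it in an adapted orthonormal frame. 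Everything else reduces to direct substitution, and the wedge-injectivity step is elementary.
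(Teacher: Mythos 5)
Your proposal is correct and follows essentially the same route as the paper's proof: both reduce to the characterization for $\Flu=\tilde\gamma\wedge\delta$, then apply the Kähler identities $\|\delta\|_g^2=3$, $\langle X\lrcorner\delta,Y\lrcorner\delta\rangle_g=g(X,Y)$ and $\star_6\delta=\tfrac12\delta\wedge\delta$ to collapse the horizontal Einstein equation to Ricci-flatness, rescale the vertical one to \eqref{Ric7}, and cancel $\star_6\delta$ from the Maxwell constraint. Your version merely spells out the orientation/sign bookkeeping and the wedge-injectivity step that the paper leaves implicit.
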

\begin{proof}
Let us first mention that  the K\"ahler form $\delta$ is closed and coclosed, and  that $\| \delta \|_g^2=3$. The equation  $\dd \star_5 \tilde \gamma \wedge \star_6 \delta = \frac{\tilde \gamma \wedge \tilde \gamma \wedge \delta \wedge \delta}{2}$  is in this case equivalent to $\dd \star_5 \tilde \gamma=\tilde \gamma \wedge \tilde \gamma$  because of the identity 
\begin{equation}
\label{eqn: kahler identity}
\star_6 \delta=\frac{1}{2} \delta \wedge \delta
\end{equation}
 on the K\"ahler form. The first of the equations \eqref{Ric6} is satisfied since $M^6$ is K\"ahler while the second reduces to \eqref{Ric7}. This completes the proof.
\end{proof}
This construction illustrates the well-known fact that Ricci-flat K\"ahler manifolds and, more specifically, Calabi-Yau manifolds play an important role as components of bosonic supergravity backgrounds.

\subsection{Results concerning the flux form $\Flu=\theta$ and K\"ahler-Einstein metrics}
By Corollary \ref{constantlength}, we know that $(\X^{1,10}=\widetilde{M}^{1,4} \times M^6, h=\tilde g + g, \Flu= \theta)$ is a solution of the supergravity Einstein equations, only if $\|\theta\|_{ g}^2$ is constant and $(\widetilde{M}^{1,4}, \tilde g)$ is an Einstein manifold with positive Einstein constant $\|\theta\|_{ g}^2/6$ (in our ``mostly minus'' convention).   By also taking into account Propositions \ref{casesMC} and \ref{spck}, we obtain the following theorem.
\begin{theorem} 
Consider the Lorentzian manifold $\X^{1,10}=\widetilde{M}^{1,4} \times M^6$ with metric $h=\tilde{g}+g$ and a 4-form $\Flu=\theta \in \Omega^4(M^6)$. Then  $(\X^{1,10},h,\Flu)$ is a bosonic supergravity background if and only if $\theta$ is closed and coclosed, $\| \theta \|_g^2$ is constant,  $(\widetilde{M}^{1,4},\tilde g)$ is an Einstein manifold with Einstein constant $\frac{1}{6}\|\theta\|_{g}^2$ and 
\begin{equation}
\label{eqn: theta Eins}
\Ric_{h}(X,Y)=\frac{\|\theta\|^2_g}{6}g(X,Y)-\frac{1}{2}\langle X\lrcorner \theta,Y\lrcorner \theta \rangle_g, \qquad \forall \ X,Y \in \Gamma(TM^6).
\end{equation}
\end{theorem}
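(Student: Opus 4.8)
The plan is to recognize that being a bosonic supergravity background means simultaneously satisfying the three conditions of \eqref{MainEq}---the closedness condition, the Maxwell equation, and the supergravity Einstein equation---and that for the special flux form $\Flu = \theta$ each of these has already been analyzed in the earlier results. Concretely, I would invoke Proposition \ref{casesMC} for the closedness condition together with the Maxwell equation, and Proposition \ref{spck} for the supergravity Einstein equation, so that the whole statement reduces to assembling these pieces and clarifying a single constancy issue.

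First, for the closedness condition and the Maxwell equation: the flux form $\Flu = \theta$ is the special case $\Flu = \tilde\psi\theta$ of \eqref{MCtheta} with $\tilde\psi = 1$. Since $\tilde\psi = 1$ is automatically constant, Proposition \ref{casesMC}(5) tells us that $\Flu = \theta$ satisfies the Maxwell equation and the closedness condition if and only if $\theta$ is closed and coclosed, i.e. $\dd\theta = \dd\star_6\theta = 0$. Next, for the supergravity Einstein equation: by Proposition \ref{spck}(5), the Einstein condition for $\Flu = \theta$ is equivalent to the pair of equations \eqref{Rictheta}, the first of which is exactly \eqref{eqn: theta Eins} and the second of which reads $\Ric_{h}(\tilde{X},\tilde{Y})=\frac{1}{6}\|\theta\|^2_g\,\tilde{g}(\tilde{X},\tilde{Y})$ for all $\tilde X, \tilde Y \in \Gamma(T\widetilde{M}^{1,4})$.

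The only nontrivial point is to promote the factor $\frac{1}{6}\|\theta\|^2_g$ appearing in this second equation to a genuine constant. Here I would observe that $\Ric_{h}(\tilde{X},\tilde{Y}) = \Ric_{\tilde g}(\tilde{X},\tilde{Y})$ is a function of the point in $\widetilde{M}^{1,4}$ alone, whereas the right-hand side factors as a function on $\widetilde{M}^{1,4}$ (namely $\frac{1}{6}\tilde g(\tilde X, \tilde Y)$) times a function on $M^6$ (namely $\|\theta\|_g^2$). Lemma \ref{sumofproducts}---this is precisely the mechanism behind Corollary \ref{constantlength}(5)---then forces $\|\theta\|^2_g$ to be constant, and hence the second equation of \eqref{Rictheta} is exactly the assertion that $(\widetilde{M}^{1,4},\tilde g)$ is Einstein with Einstein constant $\frac{1}{6}\|\theta\|^2_g$.

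Combining these observations yields the equivalence in both directions. If $(\X^{1,10},h,\theta)$ is a background, then closedness and the Maxwell equation give closedness and coclosedness of $\theta$ via Proposition \ref{casesMC}(5), while the Einstein equation gives \eqref{eqn: theta Eins}, the constancy of $\|\theta\|^2_g$, and the Einstein property of $\widetilde{M}^{1,4}$ via Proposition \ref{spck}(5) and the constancy argument above. Conversely, given the four listed conditions, Proposition \ref{casesMC}(5) supplies the closedness condition and the Maxwell equation, while \eqref{eqn: theta Eins} together with the Einstein property of $\widetilde{M}^{1,4}$ (using constancy of $\|\theta\|^2_g$ to restore the second line of \eqref{Rictheta}) supplies the Einstein equation through Proposition \ref{spck}(5). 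I expect no genuine obstacle, since the substantive computations were already carried out in the cited results; the only care required is the passage between ``$(\widetilde{M}^{1,4},\tilde g)$ Einstein with constant $\frac{1}{6}\|\theta\|_g^2$'' and the raw second equation of \eqref{Rictheta}, which is exactly where the constancy of $\|\theta\|^2_g$ via Lemma \ref{sumofproducts} is needed.
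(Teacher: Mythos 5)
Your proposal is correct and follows essentially the same route as the paper, which obtains this theorem by combining Proposition \ref{casesMC}(5), Proposition \ref{spck}(5), and Corollary \ref{constantlength}(5) (whose constancy mechanism is exactly the Lemma \ref{sumofproducts} argument you spell out). Your write-up merely makes explicit the separation-of-variables step that the paper labels ``obvious,'' so there is nothing to correct.
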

As a consequence, we obtain the following proposition. 
\begin{prop}
\label{prop:thetakahler}
Consider the Lorentzian manifold $\X^{1,10}=\widetilde{M}^{1,4} \times M^6$, where $(\widetilde{M}^{1,4},\tilde g)$ is a Lorentzian  manifold and $(M^6,g,\omega)$ is a K\"ahler manifold. Then the triple $(\X^{1,10}, h=\tilde{g}+g, \Flu=\theta=c\star_6 \omega)$  is a bosonic supergravity background if and only if $(\widetilde{M}^{1,4},\tilde g)$  and $(M^6,g)$ are Einstein with   Einstein constants $\frac{1}{2}c^2$ and $-\frac{1}{2}c^2$, respectively.
\end{prop}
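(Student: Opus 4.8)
The plan is to specialize the criterion in the preceding theorem (the characterization of backgrounds with $\Flu=\theta\in\Omega^4(M^6)$) to $\theta=c\star_6\omega$ and reduce all the remaining content to a single pointwise computation of the symmetric tensor $\langle X\lrcorner\theta,Y\lrcorner\theta\rangle_g$. Since that theorem already packages the closedness condition, the Maxwell equation, and part of the Einstein equation into a clean list, the work is to check which items hold automatically in the K\"ahler case and which encode the Einstein conditions on the two factors.

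First I would verify that the three ``free'' hypotheses of that theorem --- that $\theta$ be closed, coclosed, and of constant norm --- hold automatically once $M^6$ is K\"ahler. Indeed, since $\omega$ is parallel it is both closed and coclosed, so $\dd\theta=c\,\dd\star_6\omega=0$ and $\dd\star_6\theta=\pm c\,\dd\omega=0$, while $\|\theta\|_g^2=c^2\|\star_6\omega\|_g^2=c^2\|\omega\|_g^2=3c^2$ is constant (recall $\|\omega\|_g^2=3$, as used in Proposition \ref{prop61}). In particular Corollary \ref{constantlength}(5) immediately gives that $(\widetilde{M}^{1,4},\tilde g)$ must be Einstein with constant $\tfrac16\|\theta\|_g^2=\tfrac12 c^2$, which is one half of the claimed equivalence.

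It then remains to analyze the Riemannian equation \eqref{eqn: theta Eins}. Using the K\"ahler identity \eqref{eqn: kahler identity} one has $\theta=\tfrac{c}{2}\,\omega\wedge\omega$, so that $X\lrcorner\theta=c\,(X\lrcorner\omega)\wedge\omega$ for every $X\in\Gamma(TM^6)$. The key step is to show that $\langle X\lrcorner\theta,Y\lrcorner\theta\rangle_g=2c^2\,g(X,Y)$. I would establish this either by a direct computation in a local unitary coframe $e^1,\dots,e^6$ adapted to $(g,J)$, in which $\omega=e^1\wedge e^2+e^3\wedge e^4+e^5\wedge e^6$ and both the contraction and the wedge are read off termwise, or more conceptually by noting that $\langle X\lrcorner\theta,Y\lrcorner\theta\rangle_g$ is a $\mathsf{U}(3)$-invariant symmetric bilinear form on each tangent space (being built solely from the $\mathsf{U}(3)$-invariant datum $\omega$) and hence a fixed multiple of $g$, the multiple then being computed on a single unit vector. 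Substituting this identity together with $\|\theta\|_g^2=3c^2$ into \eqref{eqn: theta Eins} gives
\[
\Ric_g(X,Y)=\frac{c^2}{2}\,g(X,Y)-\frac{1}{2}\cdot 2c^2\,g(X,Y)=-\frac{1}{2}c^2\,g(X,Y),
\]
so that \eqref{eqn: theta Eins} is equivalent to $(M^6,g)$ being Einstein with constant $-\tfrac12 c^2$. As every implication above is reversible, this yields the stated equivalence.

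The main obstacle I anticipate is the bookkeeping of signs in the ``mostly minus'' setup: the precise value of $\|\omega\|_g^2$ on the negative-definite $M^6$, the sign produced by $\star_6\star_6$ when checking coclosedness, and above all the constant $2c^2$ in $\langle X\lrcorner\theta,Y\lrcorner\theta\rangle_g=2c^2\,g(X,Y)$, where the minus sign from the inner product on $3$-forms and the minus sign from $g(X,X)<0$ must cancel to produce the correct overall sign. The invariance argument is attractive precisely because it isolates this bookkeeping into one scalar; one still has to confirm that the relevant invariant tensor is genuinely a multiple of $g$ rather than involving $g(JX,JY)$, but this causes no trouble since $g$ is Hermitian and thus $g(JX,JY)=g(X,Y)$.
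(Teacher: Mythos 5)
Your proposal is correct and follows essentially the same route as the paper: verify that $\theta=c\star_6\omega$ is closed, coclosed and of constant norm, invoke the preceding characterization to make the Lorentzian factor Einstein with constant $\tfrac16\|\theta\|_g^2=\tfrac12 c^2$, and reduce the Riemannian equation to the pointwise identity $\langle X\lrcorner\theta,Y\lrcorner\theta\rangle_g=\tfrac23\|\theta\|_g^2\,g(X,Y)=2c^2 g(X,Y)$, which the paper also establishes by the adapted-frame computation you describe (your $\mathsf{U}(3)$-invariance argument is a pleasant but minor alternative for that single step). All constants and signs check out against the paper's \eqref{EinCond}.
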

\begin{proof}
Let $M^6$ be a K\"ahler--Einstein manifold with complex structure $J$ and K\"ahler 2-form $\omega$, defined by $\omega_{ik}J^k_j=g_{ij}$. Obviously,  the 4-form  $\theta=c\star_6\omega$  is coclosed. By the identity \eqref{eqn: kahler identity},
we learn that the flux form is also closed. Hence the Maxwell equation and closedness condition are satisfied.
Recall now that $(M^6, g, \omega)$  admits an adapted orthonormal frame 
\[
e_1\,,\quad e_2:=Je_1\,,\quad e_3\,,\quad e_4:=Je_3\,,\quad e_5\,,\quad e_6:=Je_5\,,
\]
such that each $\omega_{ij}=0$ if $(i,j)\notin \{(1,2),(2,1),(3,4),(4,3),(5,6),(6,5)\}$. Then, by \eqref{eqn: kahler identity}, we see that 
\[
\theta_{ijkl}=\omega_{ij}\omega_{kl}-\omega_{ik}\omega_{jl}-\omega_{il}\omega_{kj}\,.
\]
This relation can be used to  compute $\frac{1}{3!}\theta_{iabc}\theta_{j}^{\ abc}$, where with $g$ we may raise or lower the indices.
In particular, having in mind the relation $\omega_{ik}J^k_j=g_{ij}$, it is straightforward  to verify that
\[
\langle\theta_i,\theta_j\rangle_h=\frac{1}{3!}\theta_{iabc}\theta_{j}^{\ abc}=2g_{ij}=\frac{2}{3}\|\theta\|^2_g g_{ij}\,.
\]
Note that $\star_6$ is an isometry and we have $c^2\|\omega\|^2_g=\|\theta\|^2_g$. Hence \eqref{eqn: theta Eins} together with the second equation of \eqref{Rictheta} reduces to the following system of equations:
\begin{equation}
\begin{cases}
\Ric_{h}(X,Y)&=-\frac{c^2}{2}g(X,Y)\,,\\
\Ric_{h}(\tilde{X},\tilde{Y})&=\frac{c^2}{2}\tilde{g}(\tilde{X},\tilde{Y})\,.
\end{cases} \label{EinCond}
\end{equation}
 This completes our proof.
\end{proof}
By Corollary \ref{useforflags} the bosonic background $(\X^{1, 10}=\wi{M}^{1,4} \times M^6, h=\tilde g+g, \Flu=\theta=c\star_{6}\omega)$ discussed above must have constant positive scalar curvature given by $\mathsf{Scal}_{h}=\frac{1}{6}\|\theta\|^2_g=\frac{c^2}{2}$. Moreover, we see that  $(\wi{M}^{1,4},\tilde g)$ has positive scalar curvature while  $(M^6,g)$ has negative scalar curvature.

\begin{corol}\label{mainKE}
Let $(\wi{M}^{1,4},\tilde g)$ be a Lorentzian Einstein manifold with Einstein constant $\frac{1}{2} c^2$ and let $(M^6,g,\omega)$ be an Einstein K\"ahler manifold with with Einstein constant $-\frac{1}{2}c^2$. Then the triple \[(\wi{M}^{1,4} \times M^6, \tilde g + g, \Flu= c \star_6 \omega)\] is a bosonic supergravity background. 
\end{corol}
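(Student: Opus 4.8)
The plan is to observe that this corollary is nothing but the reverse implication of Proposition~\ref{prop:thetakahler}, so the whole argument consists of verifying that the hypotheses of that proposition hold and then invoking it. First I would note that an Einstein K\"ahler manifold $(M^6,g,\omega)$ is in particular a K\"ahler manifold; hence both structural assumptions of Proposition~\ref{prop:thetakahler}---that $(\widetilde{M}^{1,4},\tilde g)$ is Lorentzian and that $(M^6,g,\omega)$ is K\"ahler---are met verbatim.

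Next I would match the curvature data. The corollary assumes $(\widetilde{M}^{1,4},\tilde g)$ is Einstein with constant $\frac{1}{2}c^2$ and $(M^6,g)$ is Einstein with constant $-\frac{1}{2}c^2$, and these are exactly the two Einstein constants appearing on the right-hand side of the biconditional in Proposition~\ref{prop:thetakahler}. Since that proposition is an ``if and only if'' statement, the implication ``Einstein with these constants $\Rightarrow$ bosonic supergravity background with $\Flu=c\star_6\omega$'' follows at once, which is precisely the claim.

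For a self-contained rendering I would instead replay the proof of Proposition~\ref{prop:thetakahler}: verify that $\theta=c\star_6\omega$ is coclosed (as $\star_6$ is an isometry and $\omega$ is coclosed) and closed (via the identity~\eqref{eqn: kahler identity}, $\star_6\omega=\frac{1}{2}\omega\wedge\omega$), so that the closedness condition and Maxwell equation in~\eqref{MainEq} hold; then, using an adapted orthonormal frame to obtain $\langle\theta_i,\theta_j\rangle_h=\frac{2}{3}\|\theta\|_g^2\,g_{ij}$, check that the supergravity Einstein equation collapses to the system~\eqref{EinCond} and is solved by the assumed Einstein constants.

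There is essentially no genuine obstacle, since the statement is an immediate corollary; the only point demanding a moment's care is the sign and scale bookkeeping forced by the ``mostly minus'' convention. Concretely, one must confirm that $\|\omega\|_g^2=3$, whence $\|\theta\|_g^2=3c^2$ and $\frac{1}{6}\|\theta\|_g^2=\frac{1}{2}c^2$, so that the positive Einstein constant $\frac{1}{2}c^2$ on $\widetilde{M}^{1,4}$ and the negative Einstein constant $-\frac{1}{2}c^2$ on $M^6$ line up exactly with~\eqref{EinCond}.
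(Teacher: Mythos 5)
Your proposal is correct and coincides with the paper's own treatment: Corollary~\ref{mainKE} is stated there without a separate proof precisely because it is the reverse implication of the biconditional in Proposition~\ref{prop:thetakahler}, and your bookkeeping $\|\omega\|_g^2=3$, $\|\theta\|_g^2=3c^2$, $\tfrac16\|\theta\|_g^2=\tfrac12 c^2$ matches \eqref{EinCond}. (Only a cosmetic remark: coclosedness of $\theta=c\star_6\omega$ follows from \emph{closedness} of $\omega$ via $\dd\star_6\theta=\pm c\,\dd\omega$, not from its coclosedness, though for a K\"ahler form both hold.)
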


\begin{remark}
Several bosonic backgrounds of the type appearing in Corollary \ref{mainKE} are already known. In \cite{Pope89} it was shown that $(\mathsf{AdS})_{5}\times M^6$,
where $(M^6,g,\omega)$ is  a K\"ahler manifold, defines a bosonic background with flux form $\Flu=\frac{1}{2} c \omega \wedge \omega$ (which is proportional to $\star_6 \omega$), if and only if $(M^6,g)$ is Einstein with negative scalar curvature (in ``mostly minus'' setting). In particular, the equations (4) and (5) in \cite{Pope89} are the same as \eqref{EinCond}, up to a scalar and signature convention. Thus Corollary \ref{mainKE} can be viewed as a  slight generalization of the backgrounds found in \cite{Pope89}, in the sense that $\wi{M}^{1,4}$ is here allowed to be any Lorentzian Einstein manifold with positive Einstein constant (in the ``mostly minus'' setup), not only $(\mathsf{AdS})_{5}$.
\end{remark}
In order to get other bosonic supergravity backgrounds from Corollary \ref{mainKE}, we need Lorentzian Einstein manifolds $(\wi{M}^{1,4}, \tilde g)$ with positive scalar curvature that are different from $(\mathsf{AdS})_{5}$. The class of Lorentzian Einstein-Sasakian manifolds provides us with many candidates. 
 
Lorentzian Einstein-Sasakian structures were  studied in \cite{Baum2, Bohle, Baum3}   under the geometric perspective of Killing and twistor spinors on Lorentzian manifolds. Such  manifolds admit a cone characterization and, in the simply connected case, a spin structure (\cite[Lem.~12]{Bohle}), as in the Riemannian case.  Moreover, there is  an   analogue of  the well-known construction of Riemannian Einstein-Sasakian manifolds (see for example \cite{Baum}), which  provides   Lorentzian Einstein-Sasakian manifolds in terms of
circle bundles over K\"ahler-Einstein  manifolds of negative scalar curvature  (\cite[Lem.~14]{Bohle}). 
 
 Lorentzian Einstein-Sasakian manifolds with positive  (in the ``mostly minus'' convention) Einstein constant also  occur  within the framework of {\it $\eta$-Einstein Sasakian geometry}, and we refer to \cite{Boyer} for many details and notions that we omit. 
 A particularly important result, from our perspective,  is that every {\it negative Sasakian manifold $M^{2n+1}$} admits   a Lorentzian Einstein-Sasakian structure with positive Einstein constant $2n$, in the ``mostly minus'' convention (\cite[Cor.~24]{Boyer}). In particular, the 5-sphere $\Ss^5$ admits infinitely many different Lorentzian Einstein-Sasakian structures, with  Einstein constant $4$ (all of them are inhomogeneous).  Each of these can be used as the five-dimensional Lorentzian manifold $\widetilde{M}^{1, 4}$ in Corollary \ref{mainKE}, providing us with infinitely many  decomposable backgrounds (carrying the same flux form). 
 
\begin{example} \label{5sphere}
Let $(\wi{M}^{1,4},\tilde g)$ be the $5$-sphere $\Ss^5$ with an Einstein metric coming from any of the infinitely many Lorentzian Einstein-Sasakian structures mentioned above with Einstein constant $4$, and let $(M^6,g,\omega)$ be any Einstein K\"ahler manifold with Einstein constant $-4$. Then the triple 
\[ (\wi{M}^{1,4} \times M^6, \tilde g + g , \Flu= 2 \sqrt{2} \star_6 \omega)\]
is a bosonic supergravity background. 
\end{example}
 
 The connected sum $\sharp k (\Ss^2\times\Ss^3)$ also admits Lorentzian Einstein-Sasakian metrics for any integer $k\geq 1$ (see \cite{Gomez, Boyer}), giving another class of Lorentzian Einstein-Sasakian manifolds that can potentially be used as ingredients in bosonic supergravity backgrounds. Example \ref{5sphere} highlights the appearance of Lorentzian Einstein-Sasakian geometries in supergravity, and more  applications of such structures in eleven-dimensional supergravity are described in \cite{FS}.  For further details on  the applications of five-dimensional Lorentzian manifolds in certain supergravity theories, the reader may consult the recent work \cite{BF21} and the references therein.

\begin{remark}
Some of the  bosonic backgrounds of the type appearing in Corollary \ref{mainKE} are symmetric.  For example, if $\wi{M}^{1,4}=(\mathsf{AdS})_{5}$ and $M^6$ is one of the symmetric spaces $\mathbb{CP}^3=\SU(4)/ \mathsf{U}(3)$ or $\mathsf{Gr}_{+}(2,5)$, endowed with their respective  homogeneous K\"ahler-Einstein metrics, then we obtain decomposable  symmetric  backgrounds (see also \cite[Sect. 4.4]{Fig2}).   To construct decomposable, homogeneous but non-symmetric, $(5, 6)$-supergravity backgrounds, we may  use the full flag manifold $\bb{F}=\SU(3)/T_{\mathsf{max}}$, see for example \cite{Sak1} for the corresponding  K\"ahler-Einstein metrics. Also,  in \cite{Oog} the reader can find    families of non-supersymmetric bosonic backgrounds with non-relativistic symmetry based on products involving $(\mathsf{AdS})_{5}$.
Finally, it is worth mentioning that \eqref{eqn: kahler identity} also holds for a strictly nearly K\"ahler manifold,    and $\|\theta\|_g^2=c^2 \|\omega\|_g^2$ is a constant as well, see  \cite[Cor.~2.7]{MNS05}. However, for a strictly nearly K\"ahler manifold, the K\"ahler form is not closed. Thus,   for this case the 4-form $\Flu$ chosen above is not coclosed, and can not serve as a flux form.  
\end{remark}

\section{Conclusion}
The aim of this paper was to find new bosonic supergravity backgrounds by searching for decomposable $(5,6)$-solutions to the bosonic supergravity equations (1.1). Following the ideas proposed in \cite{CG}, where they were applied to find decomposable $(6,5)$-solutions, we analyzed the bosonic supergravity equations for a variety of different types of flux 4-form $\Flu$. We singled out some special cases that we analyzed more carefully in order to get concrete new supergravity backgrounds.

For several different types of null 4-forms we found supergravity backgrounds whose 5-dimensional Lorentzian part was a special type of Ricci-isotropic Walker manifold. These results, which can be found in Section 5, are very close in nature to those of \cite{CG}, and one may conjecture that similar results and examples can be found for decomposable $(m,11-m)$-solutions with other choices of $m$. A couple of places where we differed from \cite{CG} were Corollary 3.5 and Proposition 5.11 (and the corresponding Example 5.12) which contain insights that were not used in \cite{CG}. These solutions may support supersymmetries, but we leave this investigation for a later paper.

In Section 6 we investigated two particular types of flux forms that were not null. Here we relied on some known results that are very specific to the particular pair of dimensions, $(5,6)$. For example, by assuming that the six-dimensional Riemannian component is a K\"ahler manifold, and by choosing a 4-form $\Flu$ that depended explicitly on the K\"ahler form, we showed that any Einstein K\"ahler manifold with Einstein constant $-c^2/2$  (in the ``mostly minus'' convention) can be paired with any Lorentzian Einstein manifold with Einstein constant $c^2/2$ to give an eleven-dimensional bosonic supergravity background. One background of this type that was already known is $(\mathsf{AdS})_5 \times M^6$ which was treated in \cite{Pope89} (the symmetric ones among these are also listed in Section 4.4 of \cite{Fig2}), and we describe infinitely many more: From \cite{Boyer} we know that the 5-sphere admits infinitely many different Lorentzian Einstein-Sasakian structures with Einstein constant 4, and each of them give rise to a bosonic supergravity background when paired with an Einstein K\"ahler manifold with Einstein constant $-4$.

\medskip
\noindent {\bf Acknowledgements:}  
H.C. thanks NSFC for partial support under grants No. 11521101 and No. 12071489.   I.C. and E.S. acknowledge full support by Czech Science Foundation  via the project GA\v{C}R No.~19-14466Y.  It is also our pleasure to  thank A. Galaev (UHK) for helpful discussions.


\medskip
\noindent {\bf Authors declarations:}  
The authors declare that they have no conflicts  of interest. 

\medskip
\noindent {\bf Author contributions:}  
Hanci Chi: Writing - original draft (equal); writing - review and editing (equal). Ioannis Chrysikos:  Writing - original draft (equal); writing - review and editing (equal). Eivind Schneider:  Writing - original draft (equal); writing - review and editing (equal).

\medskip
\noindent {\bf Data availability:}  
Data sharing is not applicable to this article as no new data were created or analyzed in this study.

\bibliographystyle{unsrt}

\end{document}